\documentclass[a4paper, 11pt, reqno]{amsart}
\usepackage[english]{babel}
\usepackage[utf8]{inputenc}
\usepackage[T1]{fontenc}
\usepackage{amsthm, amsmath, amssymb, amsrefs, enumerate, enumitem, mathtools, microtype, textgreek, lmodern, fullpage, xurl}
\usepackage{hyperref}

\numberwithin{equation}{section}
\setlist{nosep}
\setlist{noitemsep}

\theoremstyle{plain}
\newtheorem{thm}{Theorem}[section]
\newtheorem{prop}[thm]{Proposition}
\newtheorem{lemma}[thm]{Lemma}
\newtheorem{cor}[thm]{Corollary}
\newtheorem*{rmk}{{\bf Remark}}
\newtheorem*{rmks}{{\bf Remarks}}

\theoremstyle{definition}
\newtheorem{defn}[thm]{Definition}

\newcommand{\C}{\mathbb{C}}
\newcommand{\D}{\mathbb{D}}
\renewcommand{\H}{\mathbb{H}}
\newcommand{\Z}{\mathbb{Z}}
\newcommand{\Q}{\mathbb{Q}}
\newcommand{\N}{\mathbb{N}}
\newcommand{\R}{\mathbb{R}}
\newcommand{\re}{\operatorname{Re}}
\newcommand{\im}{\operatorname{Im}}
\newcommand{\Arg}{\operatorname{Arg}}
\newcommand{\SL}{\operatorname{SL}}

\newcommand{\vt}[1]{\left\lvert #1 \right\rvert}
\newcommand{\Ec}{\mathcal{E}}
\newcommand{\Qc}{\mathcal{Q}}

\DeclareMathOperator{\sgn}{sgn}
\DeclareMathOperator{\Log}{Log}
\newcommand{\ol}[1]{\overline{#1}}

\renewcommand{\a}{\alpha}
\renewcommand{\b}{\beta}
\newcommand{\de}{\delta}
\newcommand{\del}{\partial}
\newcommand{\eps}{\varepsilon}
\newcommand{\g}{\gamma}
\renewcommand{\k}{\kappa}
\renewcommand{\l}{\lambda}
\newcommand{\om}{\omega}
\renewcommand{\t}{\tau}
\newcommand{\De}{\Delta}
\newcommand{\Ga}{\Gamma}
\newcommand{\La}{\Lambda}
\newcommand{\Om}{\Omega}

\newcommand{\Qf}{\mathfrak{Q}}
\newcommand{\ff}{\mathfrak{f}}

\newcommand{\mat}[1]{\begin{pmatrix}#1\end{pmatrix}}
\newcommand{\pmat}[1]{\left(\begin{smallmatrix}#1\end{smallmatrix}\right)}
\newcommand{\abcd}{\begin{pmatrix}a&b\\c&d\end{pmatrix}}
\newcommand{\pabcd}{\left(\begin{smallmatrix}a&b\\c&d\end{smallmatrix}\right)}

\renewcommand{\pmod}[1]{\   \left(  \mathrm{mod} \,  #1 \right)}
\newcommand{\Pmod}[1]{\   (  \mathrm{mod} \,  #1 )}

\title[A modular framework for functions of Knopp]{A modular framework for functions of Knopp and indefinite binary quadratic forms}
\author{Kathrin Bringmann}
\address{Department of Mathematics and Computer Science, Division of Mathematics, University of Cologne, Weyertal 86--90, 50931 Cologne, Germany}
\email{kbringma@math.uni-koeln.de}
\author{Andreas Mono}
\address{Department of Mathematics and Computer Science, Division of Mathematics, University of Cologne, Weyertal 86--90, 50931 Cologne, Germany}
\curraddr{\textsc{Department of Mathematics, Vanderbilt University, 1326 Stevenson Center, Nashville, TN 37240, USA}}
\email{andreas.mono@vanderbilt.edu}
\date{\today}

\begin{document}

\begin{abstract}
We study functions introduced by Knopp and complete them to non-holomorphic bimodular forms of positive integral weight related to indefinite binary quadratic forms. We investigate further properties of our completions, which in turn motivates certain local cusp forms. We then define modular analogues of negative weight of our local cusp forms, which are locally harmonic Maass forms with continuously removable singularities. We show that they admit local splittings in terms of Eichler integrals.
\end{abstract}

\subjclass[2020]{11F11 (primary); 11E16, 11E45, 11F37 (secondary)}
\keywords{Harmonic Maass forms, integral binary quadratic forms, locally harmonic Maass forms, modular completion, theta lifts.}

\maketitle

\section{Introduction and statement of results}

Throughout the paper $D>0$ is a non-square discriminant, $k\in2\N$, $\Qc_d$ denotes the set of integral binary quadratic forms $Q=[a,b,c]$ of discriminant $d\in\Z$, and $\H$ is the complex upper half-plane. In $1975$, Zagier \cite{zagier75} introduced the functions\footnote{We define $f_{\k,D}$ in Zagier's original normalization, which differs from the normalization used in \cite{BKK}.}
\[
	f_{\k,D}(\t) \coloneqq \sum_{Q\in\Qc_D} \frac{1}{Q(\t,1)^\k},\qquad \t \in \H,
\]
and proved that they are weight $2\kappa$ cusp forms if $\k>1$ (if $\k=1$, one may use Hecke's trick, see \cite{koh}*{p.\ $239$}). To name a few prominent applications of the $f_{\k,D}$, they are coefficients of the holomorphic kernel function of the Shimura \cite{shim} and Shintani \cite{shin} lifts due to \cite{koh}, and they are closely related to central $L$-values by \cite{koza81}. Their even periods are rational according to \cite{koza84}, and they generate the space of weight $2\kappa$ cusp forms \cite{katok}.

Over $30$ years ago, Knopp \cite{knopp90}*{$(4.5)$} found a term-by-term preimage of each $f_{k,D}$ under the {\it Bol operator} $\D^{2k-1}$, where $\D\coloneqq\frac{1}{2\pi i}\frac{\del}{\del\t}$ (compare Proposition \ref{prop:psiprop} (2)). We refer to Knopp's earlier work \cites{knopp62, knopp64, knopp74} and to the references \cites{bruonrho, eichler, lehner71} for the importance of the Bol operator. To ensure convergence after summing over $Q\in\Qc_D$, Knopp changed the sign of $k$ in his result afterwards, which lead to (throughout $\Log$ denotes the principal branch of the complex logarithm)
\begin{equation}\label{eq:psidef}
	\psi_{k+1,D}(\t) \coloneqq \sum_{Q\in\Qc_D} \frac{\Log\left(\frac{\t-\a_Q^-}{\t-\a_Q^+}\right)}{Q(\t,1)^{k+1}},\qquad \a_{[a,b,c]}^\pm \coloneqq \frac{-b\pm\sqrt{D}}{2a} \in \R.
\end{equation}
He also stated that $\psi_{k+1,D}(\t+1)=\psi_{k+1,D}(\t)$, and the behaviour of $\psi_{k+1,D}$ under modular inversion\footnote{We alert the reader to the fact that Knopp used the older convention $T=\pmat{0&-1\\1&0}$.} (see \cite{knopp90}*{(4.6)}). Correcting a typo there, we find that (see Proposition \ref{prop:psiprop} (3))
\begin{equation}\label{eq:psitransf}
	\t^{-2k-2}\psi_{k+1,D}\left(-\frac1\t\right) - \psi_{k+1,D}(\t) = \sum_{Q\in\Qc_D} \frac{\log\vt{\frac{\a_Q^+}{\a_Q^-}}}{Q(\t,1)^{k+1}} - 2\pi i\sum_{\substack{Q=[a,b,c]\in\Qc_D\\a<0<c}} \frac{1}{Q(\t,1)^{k+1}}.
\end{equation}

On the one hand, we observe that $\psi_{k+1,D}$ is holomorphic and vanishes at $i\infty$ (this follows by Proposition \ref{prop:psiprop} (1) and \eqref{eq:loglimit}). On the other hand, $\psi_{k+1,D}$ itself is not modular. Hence, it is natural to ``complete'' $\psi_{k+1,D}$. Setting
$
\H^-\coloneqq -\H
$
throughout, completions of $\psi_{k+1,D}$ are bimodular forms\footnote{We slightly modify the initial definition by Stienstra and Zagier \cite{stza} to include the domain $\H\times\H^-$.} $\Om_{k+1,D}$ of weight $(2k+2,0)$ defined on $\H\times\H^-$ such that
\begin{equation} \label{eq:naturality}
	\lim_{w\to -i\infty} \Omega_{k+1,D}(\t,w) = \psi_{k+1,D}(\t).
\end{equation}
Thus, from the completions $\Omega_{k+1,D}$ one can uniquely recover the original functions $\psi_{k+1,D}$.

In this paper, we construct such completions explicitly. Firstly, we note that the final sum appearing in \eqref{eq:psitransf} is finite, because $b^2+4\vt{ac}=D>0$ has only finitely many integral solutions. This leads to Knopp's modular integrals with rational period functions \cites{knopp78, knopp81}. Roughly speaking, period polynomials describe the obstruction of modularity of Eichler integrals \cite{eichler} (defined in \eqref{eq:Eichlerdef}) of cusp forms, and Knopp generalized this notion to rational functions instead of polynomials. Such functions are called {\it modular integrals}. Parson \cite{parson} constructed such modular integrals explicitly by letting
\begin{equation} \label{eq:phidef}
	\varphi_{k+1,D}(\tau) \coloneqq \frac{1}{2}\sum_{Q \in \Qc_D} \frac{\sgn(Q)}{Q(\tau,1)^{k+1}} = \sum_{\substack{Q = [a,b,c]  \in \Qc_D\\a>0}} \frac{1}{Q(\tau,1)^{k+1}}, \qquad \sgn\left([a,b,c]\right) \coloneqq \sgn(a) 
\end{equation}
and we recall her result on the $\varphi_{k+1,D}$ in Lemma \ref{lem:phitransf}. Secondly, we define
\begin{equation*}
	Q_w \coloneqq \frac{1}{\im(w)}\left(a\vt{w}^2+b\re(w)+c\right),\qquad S_Q \coloneqq \{\t\in\H \colon Q_\t=0\},\qquad E_D \coloneqq \bigcup_{Q\in\Qc_D} S_Q,
\end{equation*}
for $w\in\C\setminus\R$, $Q \in \Qc_d$ ($d \in \Z$), as well as the functions
\begin{equation}\label{eq:rhodef}
	\rho_{k+1,D}(\t,w) \coloneqq \sum_{Q\in\Qc_D} \frac{\Log\left(\frac{w-\a_Q^-}{w-\a_Q^+}\right)}{Q(\t,1)^{k+1}},\qquad \l_{k+1,D}(\t,w) \coloneqq 2i\sum_{Q\in\Qc_D} \frac{\arctan\left(\frac{Q_w}{\sqrt{D}}\right)}{Q(\t,1)^{k+1}}. 
\end{equation}
for $w\in\H^-$. We refer to Propositions \ref{prop:rhoprop} and \ref{prop:lambdaprop} for some of their properties. 

Thirdly, we define completions of $\psi_{k+1,D}$ as\footnote{The name completion is justified by Theorem \ref{thm:OmegaMmain} (2).}
\begin{align}
	\Omega_{k+1,D}(\t,w) &\coloneqq \psi_{k+1,D}(\t)-\rho_{k+1,D}(\tau,w) + 2\pi i \varphi_{k+1,D}(\tau) + \lambda_{k+1,D}(\tau,w), \label{eq:OmegaMdef}
\end{align}
on $\H\times\H^-$. In Proposition \ref{prop:lambdaprop} (2), we show that the functions $\lambda_{k+1,D}$ are theirself bimodular of the same weights as the $\Omega_{k+1,D}$. Their purpose is to ensure parts (2) to (5) of the following theorem (compare Proposition \ref{prop:lambdaprop} (3) and \eqref{eq:VanishingOnDiagonal}). In particular, the functions $\lambda_{k+1,D}$ ensure that the $\Omega_{k+1,D}$ from \eqref{eq:OmegaMdef} satisfy \eqref{eq:naturality}.

\begin{thm} \label{thm:OmegaMmain}
	Let $\tau \in \H$, $w \in \H^-$.
	\begin{enumerate}[leftmargin=*,label=\rm{(\arabic*)}]
		\item The functions $\Om_{k+1,D}$ are bimodular of weight $(2k+2,0)$ that is
		\[
			\Omega_{k+1,D}(\t+1,w+1) = \Omega_{k+1,D}(\t,w),\qquad \Omega_{k+1,D}\left(-\frac1\t,-\frac1w\right) = \t^{2k+2}\Omega_{k+1,D}(\t,w).
		\]

		\item We have
		\begin{equation*}
			\lim_{w\to -i\infty} \Omega_{k+1,D}(\t,w) = \psi_{k+1,D}(\t).
		\end{equation*}
		
		\item We have
		\begin{align*}
			\lim_{\tau \to i\infty} \Omega_{k+1,D}(\tau,w) = 0.
		\end{align*}

		\item The functions $\Omega_{k+1,D}$ are holomorphic with respect to $\tau$ and anti-holomorphic with respect to $w$.

		\item We have that
		\begin{align*}
			\Omega_{k+1,D}(\tau,\ol\tau) = 0.
		\end{align*}
	\end{enumerate}
\end{thm}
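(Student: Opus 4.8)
The plan is to prove the vanishing on the diagonal $w=\ol\tau$ by a direct computation, exploiting the explicit definition \eqref{eq:OmegaMdef}. Since all four constituent functions $\psi_{k+1,D}$, $\rho_{k+1,D}$, $\varphi_{k+1,D}$, and $\lambda_{k+1,D}$ have the same denominator $Q(\tau,1)^{k+1}$, the strategy is to substitute $w=\ol\tau$ and show that the numerators combine term-by-term (for each fixed $Q\in\Qc_D$) to zero. In other words, it suffices to prove, for every $Q=[a,b,c]\in\Qc_D$, the pointwise identity
\[
	\Log\!\left(\frac{\tau-\a_Q^-}{\tau-\a_Q^+}\right) - \Log\!\left(\frac{\ol\tau-\a_Q^-}{\ol\tau-\a_Q^+}\right) + 2\pi i \cdot \mathbbm{1}_{a>0} + 2i\arctan\!\left(\frac{Q_{\ol\tau}}{\sqrt D}\right) = 0,
\]
where $\mathbbm{1}_{a>0}$ is the indicator that the leading coefficient of $Q$ is positive (coming from the second expression for $\varphi_{k+1,D}$ in \eqref{eq:phidef}).

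First I would handle the logarithmic terms. Writing $\tau=u+iv$ with $v>0$, the arguments $\frac{\tau-\a_Q^-}{\tau-\a_Q^+}$ and $\frac{\ol\tau-\a_Q^-}{\ol\tau-\a_Q^+}$ are complex conjugates of each other (the $\a_Q^\pm$ are real), so their principal logarithms differ by $2i\im\Log(\cdot)$, i.e.\ twice the argument of $\frac{\tau-\a_Q^-}{\tau-\a_Q^+}$, up to a possible $2\pi i$ ambiguity whenever that quantity lies on the negative real axis. Thus the difference of the two $\Log$ terms equals $2i\Arg\!\left(\frac{\tau-\a_Q^-}{\tau-\a_Q^+}\right)$ plus a correction term that will be exactly cancelled by the $2\pi i\,\mathbbm{1}_{a>0}$ contribution. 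Concretely, one computes $\Arg(\tau-\a_Q^\pm)\in(0,\pi)$ since $\im(\tau-\a_Q^\pm)=v>0$, so $\Arg\!\left(\frac{\tau-\a_Q^-}{\tau-\a_Q^+}\right)=\Arg(\tau-\a_Q^-)-\Arg(\tau-\a_Q^+)\in(-\pi,\pi)$, and this lands in the principal range; the sign of $a$ controls whether $\a_Q^->\a_Q^+$ or vice versa, which is where the indicator enters.

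It then remains to match this with the arctangent term, i.e.\ to show $\Arg\!\left(\frac{\tau-\a_Q^-}{\tau-\a_Q^+}\right) = -\arctan\!\left(\frac{Q_{\ol\tau}}{\sqrt D}\right)$ up to the bookkeeping already accounted for. I would do this by a direct trigonometric computation: compute $\tan$ of the left side using $\tan(\Arg(\tau-\a_Q^-)-\Arg(\tau-\a_Q^+)) = \frac{v(\a_Q^+-\a_Q^-)}{(u-\a_Q^-)(u-\a_Q^+)+v^2}$, then simplify the denominator using $\a_Q^+\a_Q^- = c/a$, $\a_Q^++\a_Q^- = -b/a$, $\a_Q^--\a_Q^+ = -\sqrt D/a$, and $|\tau|^2=u^2+v^2$, to recognize it as a multiple of $Q_{\ol\tau}/\sqrt D$ where $Q_{\ol\tau} = \frac{1}{\im(\ol\tau)}(a|\ol\tau|^2+b\re(\ol\tau)+c) = -\frac{1}{v}(a|\tau|^2+bu+c)$. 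The main obstacle — and the only delicate point — is the careful branch-tracking: one must verify that the $2\pi i$ jumps of the principal logarithm, the branch of $\Arg$, and the range $(-\pi/2,\pi/2)$ of $\arctan$ all align so that the identity holds on all of $\H$ without a residual additive constant, using the sign conventions $\sgn(Q)=\sgn(a)$ from \eqref{eq:phidef}. I expect this to require splitting into the cases $a>0$ and $a<0$ (and possibly the degenerate locus $\tau\in E_D$, where $Q_{\ol\tau}=0$, which should be checked separately but poses no difficulty). Finally, summing the vanishing term-by-term identity over $Q\in\Qc_D$ — legitimate since each of $\psi_{k+1,D}$, $\rho_{k+1,D}(\cdot,\ol\tau)$, $\varphi_{k+1,D}$, $\lambda_{k+1,D}(\cdot,\ol\tau)$ converges by the propositions cited in the excerpt — yields $\Omega_{k+1,D}(\tau,\ol\tau)=0$.
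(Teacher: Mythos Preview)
Your overall strategy---reduce $\Omega_{k+1,D}(\tau,\ol\tau)=0$ to a term-by-term identity over $Q\in\Qc_D$---is exactly what the paper does. But the specific pointwise identity you propose is false: it is off by $\pi i$ for every $Q$. The problem is in how you distribute $\varphi_{k+1,D}$ across the sum. The equality between the two expressions in \eqref{eq:phidef} holds only \emph{after} summing (it relies on the pairing $Q\leftrightarrow -Q$ together with $k+1$ odd); it is not a termwise equality. So in a term-by-term argument over all $Q$, the contribution of $2\pi i\,\varphi_{k+1,D}$ to the $Q$-th summand must be $\pi i\,\sgn(Q)$, not $2\pi i\,\mathbbm{1}_{a>0}$. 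A quick check: by \eqref{eq:loginput} the ratio $\frac{\tau-\a_Q^-}{\tau-\a_Q^+}$ has nonzero imaginary part, so the difference of the two $\Log$ terms is \emph{exactly} $2i\Arg\bigl(\frac{\tau-\a_Q^-}{\tau-\a_Q^+}\bigr)$ with no additional correction. Taking $a>0$ and $\tau\in S_Q$ (so $Q_\tau=0$) gives $\Arg=-\tfrac{\pi}{2}$ and $\arctan(Q_{\ol\tau}/\sqrt D)=0$, and the left side of your displayed identity evaluates to $-\pi i+2\pi i+0=\pi i\ne 0$.

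The correct termwise statement, which the paper records in the remark following the theorem, is
\[
\Log\!\left(\frac{\tau-\a_Q^-}{\tau-\a_Q^+}\right)-\Log\!\left(\frac{\ol\tau-\a_Q^-}{\ol\tau-\a_Q^+}\right)+\pi i\,\sgn(Q)+2i\arctan\!\left(\frac{Q_{\ol\tau}}{\sqrt D}\right)=0.
\]
The paper proves this by using \eqref{eq:loginput} to rewrite the $\Log$-difference as $\Log\bigl(\frac{Q_\tau/\sqrt D-i}{Q_\tau/\sqrt D+i}\bigr)$ plus a $2\pi i$ correction depending on the pair $(\sgn a,\sgn Q_\tau)$, and then matches against $\lambda_{k+1,D}(\tau,\ol\tau)$ via Proposition~\ref{prop:lambdaprop}~(4) and \eqref{eq:argdiff}. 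Your trigonometric route through $\tan$ of the argument would also go through once you replace $2\pi i\,\mathbbm{1}_{a>0}$ by $\pi i\,\sgn(Q)$; the branch bookkeeping then splits cleanly into the two cases $a>0$ and $a<0$ as you anticipated.
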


\begin{rmk}
In \cite{DIT10}*{Theorem 3}, Duke, Imamo\={g}lu, and T\'{o}th constructed (weakly) holomorphic modular integrals $F$ with rational period functions. It turns out that Parson's modular integral $\varphi_{k+1,D}$ has the same rational period functions as the ones arising from $F$, see Lemma \ref{lem:phitransf}. In \cite{DIT10}*{(16)}, Duke, Imamo\={g}lu, and T\'{o}th showed that both modular integrals differ by a (non-explicit) cusp form of weight greater than $2$. For weight $2$, the second author \cite{mo22}*{Theorem 1.1} showed that both modular integrals differ by an explicit non-zero multiple of the weight $2$ non-holomorphic Eisenstein series $\widehat{E}_2$. Theorem \ref{thm:OmegaMmain} embeds Parson's modular integral $\varphi_{k+1,D}$ into a non-holomorphic bimodular framework in higher weights.
\end{rmk}

Theorem \ref{thm:OmegaMmain} explains how modular integrals may exhibit modular properties by completing them to non-holomorphic bimodular forms. It turns out that the $\Omega_{k+1,D}$ can be extended to $\H \times \H$, and become holomorphic bimodular forms with analogous properties there. Being more precise, we define the functions
	\[
		\om_{k+1,D}(\t,z) \coloneqq \psi_{k+1,D}(\t) - \sum_{Q\in\Qc_D} \frac{\Log\left(\frac{z-\a_Q^-}{z-\a_Q^+}\right)}{Q(\t,1)^{k+1}},\qquad \t,z \in \H,
	\]
and obtain the following corollary.
\begin{cor}\label{cor:omegacor}
Let $\t,z \in \H$. We have
\begin{align*}
\om_{k+1,D}(\t,z) = \Omega_{k+1,D}\left(\tau,\overline{z}\right).
\end{align*}
In other words, the $\om_{k+1,D}$ satisfy
	\begin{align*}
		\om_{k+1,D}(\t+1,z+1) &= \omega_{k+1,D}(\t,z),\qquad \omega_{k+1,D}\left(-\frac1\t,-\frac1z\right) = \t^{2k+2}\omega_{k+1,D}(\t,z), \\
		\lim_{z \to i\infty} \omega_{k+1,D}(\tau,z) &= \psi_{k+1,D}(\tau), \qquad \lim_{\tau \to i\infty} \omega_{k+1,D}(\tau,z) = 0, \\
		\omega_{k+1,D}(\tau,\tau) &= 0,
	\end{align*}
	and are holomorphic with respect to $\t$ and $z$.
\end{cor}

To prove Theorem \ref{thm:OmegaMmain} (5), we specialize the $\lambda_{k+1,D}$ to $(\tau,w) = (\tau,\ol\tau) \in \H \times \H^-$. Roughly speaking, \eqref{eq:Lambdatemp} demonstrates that they compensate for the singularities of
\begin{align*}
\Omega_{k+1,D}\left(\t,\ol\t\right) - \lambda_{k+1,D}\left(\t,\ol\t\right) = \sum_{Q \in \Qc_D} \frac{\Log\left(\frac{\frac{Q_\t}{\sqrt{D}}-i}{\frac{Q_\t}{\sqrt{D}}+i}\right)} {Q(\tau,1)^{k+1}} + \pi i\sum_{\Qc\in\Qc_D} \frac{\sgn\left(Q_{\tau}\right)}{Q(\t,1)^{k+1}}
\end{align*}
on the set $E_D$. The functions 
\begin{align}
	\La_{k+1,D}(\t) \coloneqq \sum_{Q\in\Qc_D} \frac{\sgn(Q_\t)}{Q(\t,1)^{k+1}} \label{eq:Lambdadef}
\end{align}
appeared first in a paper \cite{mo21} by the second author, and turn out to be {\it local cusp forms}. That is, they behave like cusp forms of weight $2k+2$ outside $E_D$, however, in addition, have jumping singularities\footnote{We explain this terminology in Section \ref{sec:prelim}.} on $E_D$. A full definition of such functions can be found in \cite{mo21}*{Definition 2.7}, which in turn adapts an earlier definition by the first author, Kane, and Kohnen \cite{BKK}*{Section 2}, see Proposition \ref{prop:Lambdacusp} for more details as well. By \cite{mo21}*{Theorem 1.1}, the functions $\La_{k+1,D}$ can be written in terms of traces of cycle integrals. Alternatively, the $\La_{k+1,D}$ might be viewed ``odd'' positive weight analogues of the $f_{k,D}$. Recently, the $f_{k,D}$ motivated the introduction of new modular objects $\mathcal{F}_{1-k,D}$ by the first author, Kane, and Kohnen \cite{BKK}. The function $\mathcal{F}_{1-k,D}$ maps to $f_{k,D}$ under the Bol operator as well as the {\it shadow operator} $\xi_{2-2k}\coloneqq2iv^{2-2k}\ol{\frac{\del}{\del\ol\t}}$ of Bruinier and Funke \cite{brufu} (up to constants). Such a behaviour is impossible in the situation of a (globally defined) non-trivial harmonic Maass form\footnote{One may overcome this by weakening the growth condition in Definition \ref{defn:autobj}, see \cite{thebook}*{Theorem 6.15}.}. Hence, it is natural to construct ``even'' analogues $\Psi_{-k,D}$ of the $\mathcal{F}_{1-k,D}$ along the lines of \cite{BKK}. For this, we let $\b(x;s,w)\coloneqq\int_0^xt^{s-1}(1-t)^{w-1}dt$, $x\in(0,1]$, $\re(s)$, $\re(w)>0$, be the {\it incomplete $\b$-function}, $\t=u+iv$ throughout, and
\[
	\Psi_{-k,D}(\tau) \coloneqq \frac{1}{2}\sum_{Q \in \Qc_D} Q(\tau,1)^k \beta\left(\frac{Dv^2}{\vt{Q(\tau,1)}^2};k+\frac{1}{2},\frac{1}{2}\right), \qquad \tau \in \H \setminus E_D.
\]
In the spirit of Knopp's initial preimage of $f_{k,D}$ under the Bol operator (without an additional sign change of $k$), it turns out that $\Psi_{-k,D}$ is a preimage of $\La_{k+1,D}$ under the Bol operator and the shadow operator. If $f$ is a cusp form of weight $2k+2$, then preimages under $\D^{2k+1}$ and $\xi_{-2k}$, respectively, are provided by the holomorphic and non-holomorphic {\it Eichler integrals} (see \eqref{eq:Eichlerdiff})
\begin{equation} \label{eq:Eichlerdef}
	\Ec_f(\t) \coloneqq -\frac{(2\pi i)^{2k+1}}{(2k)!}\!\int_\t^{i\infty}\! f(w)(\t-w)^{2k} dw, \quad 
	f^*(\t) \coloneqq (2i)^{-2k-1}\!\int_{-\ol\t}^{i\infty} \ol{f\left(-\ol w\right)}(w+\t)^{2k} dw.
\end{equation}
To be able to insert the local cusp forms $\La_{k+1,D}$ into each integral in \eqref{eq:Eichlerdef}, we work in a fundamental domain of $\SL_2(\Z)$, in which we have just finitely many equivalence classes of geodesics $S_Q$. Integrating piecewise, both Eichler integrals of $\La_{k+1,D}$ are well-defined on $\H\setminus E_D$. In addition we ensure in Proposition \ref{prop:Eichlerlocal} that both Eichler integrals of $\La_{k+1,D}$ exist on $E_D$. This established, we prove the following properties of $\Psi_{-k,D}$. We refer the reader to Subsection \ref{sec:LHMF} for definitions.

\begin{thm}\label{thm:Psimain}
	\ \begin{enumerate}[leftmargin=*,label=\rm{(\arabic*)}]
		\item The functions $\Psi_{-k,D}$ are locally harmonic Maass forms of weight $-2k$ with continuously (however not differentially) removable singularities on $E_D$. 

		\item If $\t\in\H\setminus E_D$, then we have
		\[
			\Psi_{-k,D}(\t) = c_\infty - \frac{D^{k+\frac12}(2k)!}{(4\pi)^{2k+1}}\Ec_{\La_{k+1,D}}(\t) + D^{k+\frac12}\La_{k+1,D}^*(\t),
		\]
		where
		\begin{equation*}
			c_\infty \coloneqq \frac{\pi D^{k+\frac12}}{2^{2k}(2k+1)}\sum_{a\ge1} \sum_{\substack{0\le b<2a\\b^2\equiv D\pmod{4a}}} \frac{1}{a^{k+1}}.
		\end{equation*}
	\end{enumerate}
\end{thm}

\begin{rmks}
	\ 
\begin{enumerate}[leftmargin=*,label=\rm{(\arabic*)}]
\item Using a different normalization, the constant $c_{\infty}$ was introduced in \cite{BKK}*{(4.2), (7.3)}, and can be evaluated using a result of Zagier \cite{zagier77}*{Proposition 3}.
\item We prove Theorem \ref{thm:Psimain} {\rm (2)} by induction on $k$ in Section \ref{sec:Psisec}, while the first author, Kane and Kohnen utilize hyperbolic expansions\footnote{An excellent survey on such expansions can be found in \cite{IOS} for example.} to prove their corresponding result \cite{BKK}*{Theorem 1.3}.
\item In joint work with Rolen \cite{BMR}, the authors showed that both $\mathcal{F}_{1-k,D}$ (resp.\ $\Psi_{-k,D}$) map to $-\mathcal{F}_{1-k,D}$ (resp.\ $-\Psi_{-k,D}$) under the so-called flipping operator. 
\end{enumerate}
\end{rmks}

The paper is organized as follows. We recall results required for this paper in Section \ref{sec:prelim}. Section \ref{sec:Omegasec} is devoted to the proof of Knopp's initial claims on $\psi_{k+1,D}$, to some properties of the functions $\rho_{k+1,D}$, $\varphi_{k+1,D}$, $\lambda_{k+1,D}$, and to the proofs of Theorem \ref{thm:OmegaMmain} as well as of Corollary \ref{cor:omegacor}. In Section \ref{sec:Lambdasec} we investigate the behaviour of $\La_{k+1,D}$, $\Ec_{\La_{k+1,D}}$, and $\La_{k+1,D}^*$ on $E_D$. Section \ref{sec:Psisec} discusses the properties of $\Psi_{-k,D}$, and proves Theorem \ref{thm:Psimain}.

\section*{Acknowledgements}

The authors would like to thank Caner Nazaroglu for very helpful comments and discussions. Moreover we thank the referee for very helpful comments on our manuscript. Both authors were supported by the CRC/TRR 191 ``Symplectic Structures in Geometry, Algebra and Dynamics'', funded by the DFG (project number 281071066).

\section{Preliminaries}\label{sec:prelim}

\subsection{Integral binary quadratic forms and Heegner geodesics}

The modular group $\SL_2(\Z)$ acts on $\Qc_d$ by ($\pmat{a&b\\c&d}\in\SL_2(\Z)$)
\begin{equation*}
	\left(Q\circ\abcd\right)(x,y) \coloneqq Q(ax+by,cx+dy).
\end{equation*}
The action of $\SL_2(\Z)$ on $\H$ is compatible with the action of $\SL_2(\Z)$ on $\Qc_d$, in the sense that\footnote{A good reference is for example Zagier's book \cite{zagier81}*{§8}.}
\begin{equation} \label{eq:GammaOnQ}
	\left(Q \circ \gamma\right)(\tau,1) = j(\gamma,\tau)^2 Q(\gamma\tau,1), \qquad j\left(\left(\begin{matrix} a & b \\ c & d \end{matrix}\right),\tau\right) \coloneqq c\tau+d.
\end{equation}

Since $D>0$ is not a square, the two roots $\a_Q^\pm$ of $Q\in\Qc_D$ are real-quadratic and connected by the Heegner geodesic $S_Q$. We orientate $S_Q$ counterclockwise (resp.\ clockwise) if $\sgn(Q)>0$ (resp.\ $\sgn(Q)<0$). The orientation of $S_Q$ in turn determines the sign one catches if $\t$ jumps across $S_Q$. More precisely, one has $\sgn(Q)\sgn(Q_\t)<0$ if and only if $\t$ lies in the bounded component of $\H\setminus S_Q$. The unbounded connected component of $\H\setminus E_D$ is the unique such component containing $i\infty$ on its boundary. We refer the reader to the beautiful article by Duke, Imamo\={g}lu, and T\'{o}th \cite{DIT11}*{Section 4} for more on Heegner geodesics. 

We next collect results, which we utilize throughout. The following lemma is straightforward.

\begin{lemma} \label{lem:bkkidentity}
	For $Q \in \Qc_d$, $d \in \Z$, we have
	\[
		dv^2+Q_{\tau}^2v^2 = \vt{Q(\tau,1)}^2.
	\]
\end{lemma}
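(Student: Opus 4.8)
The plan is to write everything out in terms of the coefficients of $Q = [a,b,c]$ and $\tau = u+iv$, and verify the identity by a direct computation. First I would expand $Q(\tau,1) = a\tau^2 + b\tau + c$ into real and imaginary parts: $\re(Q(\tau,1)) = a(u^2 - v^2) + bu + c$ and $\im(Q(\tau,1)) = 2auv + bv = v(2au + b)$. Hence
\[
	\vt{Q(\tau,1)}^2 = \left(a(u^2-v^2) + bu + c\right)^2 + v^2(2au+b)^2.
\]
On the other hand, recalling $Q_\tau = \frac{1}{v}(a(u^2+v^2) + bu + c)$ from the definition (with $w = \tau$, so $\vt{w}^2 = u^2+v^2$, $\re(w) = u$, $\im(w) = v$), we have $Q_\tau^2 v^2 = \left(a(u^2+v^2) + bu + c\right)^2$, and $d = b^2 - 4ac$ since $Q \in \Qc_d$.

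The core of the argument is then the elementary algebraic identity, for the two quantities $A \coloneqq a(u^2-v^2)+bu+c$ (the real part) and $B \coloneqq a(u^2+v^2)+bu+c$ (so $B - A = 2av^2$ and $B + A = 2a(u^2+bu+c)/a$... more simply $A + B = 2(a u^2 + bu + c)$, wait — I would just compute $B^2 - A^2 = (B-A)(B+A) = 2av^2 \cdot \left(2(au^2+bu+c)\right)$). Then one checks
\[
	\vt{Q(\tau,1)}^2 - Q_\tau^2 v^2 = A^2 + v^2(2au+b)^2 - B^2 = v^2(2au+b)^2 - (B^2 - A^2),
\]
and expanding $v^2(2au+b)^2 = v^2(4a^2u^2 + 4abu + b^2)$ against $4av^2(au^2+bu+c) = v^2(4a^2u^2 + 4abu + 4ac)$ leaves $v^2(b^2 - 4ac) = dv^2$. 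This gives $\vt{Q(\tau,1)}^2 - Q_\tau^2 v^2 = dv^2$, which is the claim after rearranging.

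There is essentially no obstacle here; the only thing to be careful about is matching the normalization of $Q_\tau$ in the paper's definition ($Q_w = \frac{1}{\im(w)}(a\vt{w}^2 + b\re(w) + c)$ specialized to $w = \tau$) and keeping track of the $v$'s when clearing denominators. The identity is a clean cancellation, which is why the paper calls it ``straightforward.''
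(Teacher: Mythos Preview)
Your proposal is correct: the direct expansion in terms of $u$, $v$, and the coefficients $a,b,c$, together with the factorization $B^2-A^2=(B-A)(B+A)=4av^2(au^2+bu+c)$, cleanly yields $\vt{Q(\t,1)}^2-Q_\t^2v^2=(b^2-4ac)v^2=dv^2$. This is precisely the ``straightforward'' verification the paper has in mind, and there is nothing to add.
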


To determine the weights of our functions, the following lemma is useful.

\begin{lemma}\label{lem:InvarFactors}
	For every $Q\in\Qc_D$ and $\g\in\SL_2(\Z)$, we have
	\[
		(Q\circ\g)_\t = Q_{\g\t},\qquad \frac{\im(\g\t)}{\vt{Q(\g\t,1)}} = \frac{v}{\vt{(Q\circ\g)(\t,1)}}.
	\]
\end{lemma}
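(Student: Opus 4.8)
The plan is to derive both identities directly from the compatibility relation \eqref{eq:GammaOnQ} together with the standard transformation $\im(\g\t) = v/\vt{j(\g,\t)}^2$ for $\g \in \SL_2(\Z)$. First I would unwind the definition of $Q_w$. Writing $\g = \pabcd$ and $\g\t = u' + iv'$ with $v' = \im(\g\t)$, the quantity $(Q\circ\g)_\t$ is by definition $\frac{1}{v}$ times the ``linear-in-the-entries-of-$Q\circ\g$'' expression $a'\vt{\t}^2 + b'u + c'$, where $[a',b',c'] = Q\circ\g$. The cleanest route is to recognize that for any form $Q' = [A,B,C]$ one has the identity
\[
	A\vt{\t}^2 + B u + C = \frac{1}{2i}\lp \ol{\t}\,Q'(\t,1) - \t\,\ol{Q'(\t,1)} \rp \cdot \frac{1}{?}
\]
— more precisely, a short computation shows $Q'_\t = \frac{\im(\ol{\t}\,Q'(\t,1))}{v} \cdot \frac{1}{v}$ type relation; rather than chase the exact constant, the conceptual point is that $v \cdot Q'_\t = A\vt\t^2 + Bu + C$ depends on $\t$ only through the combination $\ol{\t}\,Q'(\t,1)$ up to real/imaginary parts. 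Applying this with $Q' = Q\circ\g$ and using \eqref{eq:GammaOnQ} in the form $(Q\circ\g)(\t,1) = j(\g,\t)^2\,Q(\g\t,1)$, the factor $j(\g,\t)^2$ combines with $\ol\t$ to reproduce, after simplification, $\im(\g\t) = v/\vt{j(\g,\t)}^2$ times the corresponding expression for $Q$ evaluated at $\g\t$. Concretely, $v\,(Q\circ\g)_\t = \im\lp \ol\t\, j(\g,\t)^2 Q(\g\t,1)\rp$, and since $\ol\t\, j(\g,\t)^2 = \vt{j(\g,\t)}^2\,(\ol{\g\t}) + (\text{real multiple of } j(\g,\t))$ — using $\g\t = (a\t+b)/(c\t+d)$ so that $\ol\t\,\overline{j(\g,\t)} = \overline{\ol{\g\t}}\cdot(\ldots)$ — one extracts $v\,(Q\circ\g)_\t = \vt{j(\g,\t)}^2\,\im(\ol{\g\t}\,Q(\g\t,1)) = \vt{j(\g,\t)}^2 \cdot v'\,Q_{\g\t} = v\,Q_{\g\t}$, giving $(Q\circ\g)_\t = Q_{\g\t}$ as claimed.

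For the second identity I would combine \eqref{eq:GammaOnQ} with the modular transformation of $\im$ once more: taking absolute values in $(Q\circ\g)(\t,1) = j(\g,\t)^2 Q(\g\t,1)$ gives $\vt{(Q\circ\g)(\t,1)} = \vt{j(\g,\t)}^2 \vt{Q(\g\t,1)}$, while $\im(\g\t) = v/\vt{j(\g,\t)}^2$. Dividing the two yields immediately
\[
	\frac{\im(\g\t)}{\vt{Q(\g\t,1)}} = \frac{v/\vt{j(\g,\t)}^2}{\vt{(Q\circ\g)(\t,1)}/\vt{j(\g,\t)}^2} = \frac{v}{\vt{(Q\circ\g)(\t,1)}},
\]
which is the second assertion. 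Alternatively, this follows abstractly from the first identity together with Lemma~\ref{lem:bkkidentity}: applying that lemma to $Q\circ\g$ at $\t$ and to $Q$ at $\g\t$ (both forms have discriminant $D$), one gets $Dv^2 + (Q\circ\g)_\t^2 v^2 = \vt{(Q\circ\g)(\t,1)}^2$ and $D\im(\g\t)^2 + Q_{\g\t}^2\im(\g\t)^2 = \vt{Q(\g\t,1)}^2$; since $(Q\circ\g)_\t = Q_{\g\t}$, dividing these relations and taking square roots recovers the ratio identity.

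The computation is entirely routine once the bookkeeping is set up; the only mild obstacle is keeping track of the conjugation when manipulating $\ol\t\, j(\g,\t)^2$ to produce $\ol{\g\t}$ — it is cleanest to verify the auxiliary identity $\im\lp\ol\t\, j(\g,\t)^2\,Q(\g\t,1)\rp = \vt{j(\g,\t)}^2\,\im\lp\ol{\g\t}\,Q(\g\t,1)\rp$ separately, which reduces to $\ol\t\, j(\g,\t)^2 - \vt{j(\g,\t)}^2\,\ol{\g\t} \in \R$, i.e.\ to $\ol\t\,j(\g,\t) - \overline{j(\g,\t)}\,\ol{\g\t}\,\overline{j(\g,\t)}/j(\g,\t)\cdot j(\g,\t)$ being real — equivalently, to the elementary fact that $\ol\t(c\t+d) - (c\ol\t+d)(a\ol\t+b) \in \R$ after using $ad-bc=1$. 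I would relegate this one-line verification to the computation and present the two displayed identities as the substance of the proof.
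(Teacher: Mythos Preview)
The paper states this lemma without proof, treating it as routine. Your argument for the second identity is correct: taking absolute values in \eqref{eq:GammaOnQ} and using $\im(\g\t)=v/\vt{j(\g,\t)}^2$ gives it immediately, and the alternative derivation via Lemma~\ref{lem:bkkidentity} is a nice cross-check.

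Your argument for the first identity, however, rests on a false intermediate formula. You assert (``concretely'') that $v\,(Q\circ\g)_\t = \im\bigl(\ol\t\,(Q\circ\g)(\t,1)\bigr)$, but for any $Q'=[A,B,C]$ a direct computation gives $\im(\ol\t\,Q'(\t,1)) = v(A\vt\t^2 - C)$, whereas $v\,Q'_\t = A\vt\t^2 + B u + C$; these do not agree. The auxiliary reduction you then aim for, $\ol\t\,j(\g,\t)^2 - \vt{j(\g,\t)}^2\,\ol{\g\t}\in\R$, is likewise false: for $\g=\pmat{0&-1\\1&0}$ this expression equals $\t(\vt\t^2+1)\notin\R$. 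The clean fix is to replace the bilinear evaluation by the Hermitian one. Writing $M_{Q'}=\pmat{A&B/2\\B/2&C}$ one has the exact identity $v\,Q'_\t = (\ol\t,1)\,M_{Q'}\,(\t,1)^T$; since $M_{Q\circ\g}=\g^T M_Q\,\g$ and $\g(\t,1)^T = j(\g,\t)(\g\t,1)^T$ (hence $\g(\ol\t,1)^T = \ol{j(\g,\t)}\,(\ol{\g\t},1)^T$), one obtains
\[
v\,(Q\circ\g)_\t = \ol{j(\g,\t)}\,j(\g,\t)\,(\ol{\g\t},1)\,M_Q\,(\g\t,1)^T = \vt{j(\g,\t)}^2\,\im(\g\t)\,Q_{\g\t} = v\,Q_{\g\t}.
\]
This is presumably the computation you were reaching for; the slip is that $Q'(\t,1)$ pairs $(\t,1)$ with itself, whereas $Q'_\t$ requires the mixed pairing of $(\ol\t,1)$ with $(\t,1)$.
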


We also require the following elementary lemma. 

\begin{lemma}\label{lem:difftrick}
Let $U \subseteq \C$ be open.
	Assume that $f\colon U\to\C$ is real-differentiable and satisfies $\ol{f(\ol\t)}=f(\t)$. Then
	\[
		\ol{\frac{\del}{\del\ol\t} f\left(\ol\t\right)} = \frac{\del}{\del\t} f(\t).
	\]
\end{lemma}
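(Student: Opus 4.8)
The plan is to work directly with the Wirtinger derivatives and exploit the chain rule for the antiholomorphic substitution $\t \mapsto \ol\t$. Write $\t = u + iv$ with $u, v \in \R$, and recall $\frac{\del}{\del\t} = \frac12\left(\frac{\del}{\del u} - i\frac{\del}{\del v}\right)$ and $\frac{\del}{\del\ol\t} = \frac12\left(\frac{\del}{\del u} + i\frac{\del}{\del v}\right)$. Set $g(\t) \coloneqq f(\ol\t)$, so that the hypothesis $\ol{f(\ol\t)} = f(\t)$ reads $\ol{g(\t)} = f(\t)$, equivalently $g(\t) = \ol{f(\t)}$. First I would compute $\frac{\del}{\del\ol\t}g(\t)$: writing $f = P + iQ$ with $P, Q$ real-valued and using $g(u+iv) = f(u - iv)$, one gets $\frac{\del g}{\del u}(\t) = \frac{\del f}{\del u}(\ol\t)$ and $\frac{\del g}{\del v}(\t) = -\frac{\del f}{\del v}(\ol\t)$, hence $\frac{\del g}{\del\ol\t}(\t) = \frac12\left(\frac{\del f}{\del u}(\ol\t) - i\frac{\del f}{\del v}(\ol\t)\right) = \left(\frac{\del f}{\del\t}\right)(\ol\t)$; that is, $\frac{\del}{\del\ol\t}f(\ol\t) = \left(\frac{\del f}{\del\t}\right)(\ol\t)$. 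This is just the usual fact that precomposition with complex conjugation swaps the two Wirtinger operators.

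Next I would use the identity $g = \ol{f}$ together with the elementary relations $\frac{\del}{\del\ol\t}\,\ol{h} = \ol{\frac{\del}{\del\t}h}$ valid for any real-differentiable $h$ (which follows immediately from the definitions of the Wirtinger derivatives and the fact that conjugation is $\R$-linear and commutes with $\frac{\del}{\del u}$, $\frac{\del}{\del v}$). Applying this with $h = f$ gives $\frac{\del g}{\del\ol\t}(\t) = \frac{\del}{\del\ol\t}\,\ol{f(\t)} = \ol{\frac{\del f}{\del\t}(\t)}$. Combining the two displayed computations yields $\left(\frac{\del f}{\del\t}\right)(\ol\t) = \ol{\frac{\del f}{\del\t}(\t)}$, and conjugating both sides produces exactly $\ol{\frac{\del}{\del\ol\t}f(\ol\t)} = \frac{\del}{\del\t}f(\t)$, as claimed.

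There is essentially no obstacle here: the lemma is a bookkeeping exercise with Wirtinger calculus, and the only point requiring a modicum of care is keeping straight which operator the substitution $\t \mapsto \ol\t$ produces (it turns $\frac{\del}{\del\ol\t}$ acting on $f(\ol\t)$ into $\frac{\del f}{\del\t}$ evaluated at $\ol\t$, not into $\frac{\del f}{\del\ol\t}$). One could alternatively phrase the whole argument without introducing the auxiliary $g$, simply by noting that $\t \mapsto \ol f(\ol\t)$ is, by hypothesis, equal to $f$, differentiating both sides with $\frac{\del}{\del\t}$, and applying the chain rule; but the two-step presentation above makes the role of each hypothesis transparent and avoids any sign ambiguity. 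Real-differentiability of $f$ on the open set $U$ is exactly what is needed to justify all the partial-derivative manipulations.
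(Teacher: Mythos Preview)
Your argument is correct: both steps of the Wirtinger calculus are carried out accurately, and the combination yields the stated identity. The paper itself does not give a proof of this lemma at all, merely labelling it ``elementary'' and moving on; your write-up therefore supplies precisely the routine verification the authors chose to omit, and there is no competing approach to compare against.
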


The following differentiation rules are obtained by a direct calculation.

\begin{lemma}\label{lem:holomzoo}
Let $Q \in \Qc_D$.
\begin{enumerate}[leftmargin=*, label=\rm{(\arabic*)}]
\item We have
\begin{align*}
v^2 \frac{\partial}{\partial\tau} Q_{-\overline{\tau}} = \frac{i}{2}Q(-\overline{\tau},1), \qquad v^2 \frac{\partial}{\partial\tau}
 Q_{\tau} = \frac{i}{2} Q(\overline{\tau},1), \qquad v^2 \frac{\partial}{\partial\tau} \frac{Q(\tau,1)}{v^2} = i Q_{\tau}.
\end{align*}
\item We have
\begin{align*}
\frac{\partial}{\partial\overline{\tau}} \frac{v^2}{Q(\overline{\tau},1)} = \frac{iv^2Q_{\tau}}{Q(\overline{\tau},1)^2}, \qquad 2iv^2 \frac{\partial}{\partial\overline{\tau}} Q_{\tau} = Q(\tau,1), \qquad iv^2 \frac{\partial}{\partial\overline{\tau}} \frac{Q(\overline{\tau},1)}{v^2} = Q_{\tau}.
\end{align*}
\end{enumerate}
\end{lemma}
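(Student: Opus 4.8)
\emph{Proof plan.} The plan is to reduce all six identities to Wirtinger calculus in the independent variables $\t$ and $\ol\t$. First I would rewrite every function occurring on the left-hand sides as a rational expression in $\t,\ol\t$ using $u=\tfrac12(\t+\ol\t)$ and $v=\tfrac1{2i}(\t-\ol\t)$: since $Q(w,1)=aw^2+bw+c$ and $\im(w)\,Q_w=a\vt{w}^2+b\re(w)+c$ for $w\in\C\setminus\R$, this yields
\[
	v\,Q_\t = a\t\ol\t+\tfrac b2(\t+\ol\t)+c, \qquad v\,Q_{-\ol\t} = a\t\ol\t-\tfrac b2(\t+\ol\t)+c,
\]
\[
	\ol{Q(\t,1)} = Q(\ol\t,1) = a\ol\t^2+b\ol\t+c, \qquad Q(-\ol\t,1) = a\ol\t^2-b\ol\t+c.
\]
Then I would differentiate each left-hand side by the ordinary product and quotient rules, using $\del_\t v=\tfrac1{2i}$ and $\del_{\ol\t}v=-\tfrac1{2i}$ (equivalently $\del_\t=\tfrac12(\del_u-i\del_v)$, $\del_{\ol\t}=\tfrac12(\del_u+i\del_v)$), and finally reinsert $v=\tfrac1{2i}(\t-\ol\t)$ to collapse the answer.

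As a representative case, take the third identity in (1). Writing $g\coloneqq Q(\t,1)$, the product and quotient rules give $v^2\del_\t(gv^{-2})=\del_\t g-2g\,v^{-1}\del_\t v=(2a\t+b)+ig/v$. Since $g-\bigl(a\t\ol\t+\tfrac b2(\t+\ol\t)+c\bigr)=(\t-\ol\t)(a\t+\tfrac b2)=2iv(a\t+\tfrac b2)$, one has $ig/v=iQ_\t-(2a\t+b)$, so the right-hand side collapses to $iQ_\t$. The $Q_\t$ and $Q_{-\ol\t}$ identities in (1) come out the same way: e.g.\ $\del_\t(vQ_\t)=a\ol\t+\tfrac b2$ also equals $(\del_\t v)Q_\t+v\del_\t Q_\t$, whence $v^2\del_\t Q_\t=v(a\ol\t+\tfrac b2)+\tfrac i2 vQ_\t$, which collapses to $\tfrac i2 Q(\ol\t,1)$; replacing $b$ by $-b$ (so that $v\,Q_\t$ becomes $v\,Q_{-\ol\t}$) gives the $Q_{-\ol\t}$ statement with $Q(\ol\t,1)$ replaced by $Q(-\ol\t,1)$. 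Part (2) is the mirror computation with $\del_{\ol\t}$ in place of $\del_\t$: for instance $\del_{\ol\t}(vQ_\t)=a\t+\tfrac b2=(\del_{\ol\t}v)Q_\t+v\del_{\ol\t}Q_\t$ yields $2iv^2\del_{\ol\t}Q_\t=(\t-\ol\t)(a\t+\tfrac b2)+vQ_\t=Q(\t,1)$, and the remaining two identities of (2) follow by the same routine applied to $v^2/Q(\ol\t,1)$ and $Q(\ol\t,1)/v^2$.

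The main (and essentially only) obstacle is sign discipline: one must keep $\tfrac1{2i}=-\tfrac i2$ straight, track the flip $\re(-\ol\t)=-u$ that sends $b\mapsto-b$ throughout the $Q_{-\ol\t}$ expressions, and record which of $Q(\t,1)$, $Q(\ol\t,1)$, $Q(-\ol\t,1)$ appears on the right in each case. As an independent consistency check one can differentiate the identity $Dv^2+Q_\t^2v^2=\vt{Q(\t,1)}^2$ from Lemma \ref{lem:bkkidentity} and verify that it is compatible with the formulas above.
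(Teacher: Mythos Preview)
Your proposal is correct and is exactly the ``direct calculation'' the paper invokes without details: the paper states the lemma and simply remarks that the differentiation rules are obtained by a direct calculation, so your Wirtinger-calculus computation (rewriting $vQ_\t$, $vQ_{-\ol\t}$, $Q(\ol\t,1)$, etc.\ as polynomials in $\t,\ol\t$ and differentiating termwise) is the intended argument made explicit. Your sign bookkeeping and the $b\mapsto-b$ shortcut for the $Q_{-\ol\t}$ identity are fine.
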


Letting $Q'(\t,1)\coloneqq\frac{\del}{\del\t}Q(\t,1)$, the following lemma can be verified by direct calculation.

\begin{lemma} \label{lem:Qtricks}
Let $Q\in\Qc_D$ and  $\t\in\H$. We have 
		\[
			Q_\t v + ivQ'(\t,1) = Q(\t,1),
		\qquad
			Q'(\t,1)^2 - 2Q''(\t,1)Q(\t,1) = D.
		\]
\end{lemma}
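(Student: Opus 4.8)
The plan is to reduce both identities to polynomial identities in the coefficients $a,b,c$ of $Q=[a,b,c]$, using the elementary formulas $Q(\t,1)=a\t^2+b\t+c$, $Q'(\t,1)=2a\t+b$, $Q''(\t,1)=2a$, together with the defining expression $v\,Q_\t=a\vt{\t}^2+b\re(\t)+c$ obtained by setting $w=\t$ in the definition of $Q_w$.

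For the first identity I would use $iv=\frac{\t-\ol\t}{2}$, $\vt{\t}^2=\t\ol\t$, and $\re(\t)=\frac{\t+\ol\t}{2}$, so that
\[
  v\,Q_\t = a\t\ol\t + \tfrac b2(\t+\ol\t) + c,\qquad ivQ'(\t,1) = \tfrac{\t-\ol\t}{2}(2a\t+b) = a\t^2 - a\t\ol\t + \tfrac b2\t - \tfrac b2\ol\t.
\]
Adding these two expressions, the $a\t\ol\t$ terms and the $\ol\t$ terms cancel, leaving $a\t^2+b\t+c=Q(\t,1)$, as claimed. (Equivalently, one may expand everything in the real coordinates $\t=u+iv$ and compare real and imaginary parts.)

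For the second identity I would simply expand
\[
  Q'(\t,1)^2 = 4a^2\t^2 + 4ab\t + b^2,\qquad 2Q''(\t,1)Q(\t,1) = 4a\bigl(a\t^2+b\t+c\bigr) = 4a^2\t^2+4ab\t+4ac,
\]
so that the difference equals $b^2-4ac$, which is exactly the discriminant $D$ of $Q$. Both computations are purely mechanical, so I do not anticipate any genuine obstacle; the only minor subtlety is that $Q_\t$ is built from $\vt{\t}^2$ and $\re(\t)$ rather than from $\t$ itself, so one must first rewrite $Q_\t$ in terms of $\t$ and $\ol\t$ (or in terms of $u$ and $v$) before combining it with the holomorphic quantity $Q'(\t,1)$.
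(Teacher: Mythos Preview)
Your proof is correct and is precisely the direct calculation the paper has in mind; the paper itself gives no argument beyond noting that the lemma ``can be verified by direct calculation.'' There is nothing to add.
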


\subsection{Maass forms and modular forms}

Let $\k\in\frac12\Z$ and $d$ odd. Define
\begin{align*}
N \coloneqq \begin{cases}
1 & \text{if } \k\in\Z, \\
4 & \text{if } \k\in\Z+\frac12,
\end{cases}
\qquad
\eps_d\coloneqq \begin{cases}
1 & \text{if } d\equiv1\Pmod{4}, \\
i & \text{if } d\equiv3\Pmod{4},
\end{cases}
\end{align*}
Let $\left(\frac cd\right)$ be the extended Legendre symbol and $\pabcd\in\Ga_0(N)$. The {\it slash operator} is defined as
\[
	f\big|_\kappa\mat{a&b\\c&d}(\tau) \coloneqq
	\begin{cases}
		(c\t+d)^{-\k}f(\g\t) & \text{if } \k\in\Z,\\
		\left(\frac cd\right)\eps_d^{2\k}(c\t+d)^{-\k}f(\g\t) & \text{if } \k\in\frac12+\Z,
	\end{cases}
\]
The {\it weight $\k$ hyperbolic Laplace operator} is given as
\begin{equation*}
	\De_{\k} \coloneqq -v^2\left(\frac{\del^2}{\del u^2}+\frac{\del^2}{\del v^2}\right) + i\k v\left(\frac{\del}{\del u}+i\frac{\del}{\del v}\right).
\end{equation*}

We require various classes of modular objects. 
\begin{defn} \label{defn:autobj}
	Let $f\colon \H\to\C$ be a real-analytic function.
	\begin{enumerate}[leftmargin=*, label=(\arabic*)]
		\item We call $f$ a {\it (holomorphic) modular form} of weight $\kappa$ for $\Gamma_0(N)$, if $f$ satisfies the following:
		\begin{enumerate}[leftmargin=*,label=(\roman*), wide, labelwidth=0pt, labelindent=0pt]
		\item We have $f|_\k\g=f$ for all $\g\in\Ga_0(N)$.
		\item The function $f$ is holomorphic on $\H$.
		\item The function $f$ is holomorphic at the cusps of $\Ga_0(N)$.
		\end{enumerate}
		\item We call $f$ a {\it cusp form} of weight $\k$ for $\Ga_0(N)$, if $f$ is a modular form that vanishes at all cusps of $\Ga_0(N)$.
		
		\item We call $f$ a {\it harmonic Maass form of weight $\k$} for $\Ga_0(N)$, if $f$ satisfies the following:
		\begin{enumerate}[leftmargin=*,label=(\roman*), wide, labelwidth=0pt, labelindent=0pt]
		\item For every $\g\in\Ga_0(N)$ and every $\t\in\H$ we have that $f|_\k\g=f$.
		\item The function $f$ has eigenvalue $0$ under $\De_{\k}$.
		\item There exists a polynomial $P_f\in\C[q^{-1}]$ (the principal part of $f$) such that
		\[
			f(\t) - P_f(\t) = O\left({e}^{-\delta v}\right)
		\]
		\indent\indent as $v\to\infty$ for some $\de>0$, and we require analogous conditions at all other cusps of \indent\indent $\Ga_0(N)$.
		\end{enumerate}
	\end{enumerate}
Forms in {\it Kohnen's plus space} have the additional property that their Fourier expansion is supported on indices $n$ satisfying $(-1)^{\kappa-\frac{1}{2}} n \equiv 0$, $1 \pmod {4}$ with $\kappa \in \Z + \frac{1}{2}$.
\end{defn}

We remark that $\De_{\k}$ splits as
\begin{equation}\label{eq:split}
	\De_{\k} = -\xi_{2-\kappa} \circ \xi_{\kappa},
\end{equation}
which in turn implies that a harmonic Maass form naturally splits into a holomorphic and a non-holomorphic part. The operator $\xi_\k$ annihilates the holomorphic part, while the Bol operator $\D^{1-\k}$, $\k\in-\N_0$, annihilates the non-holomorphic part (since our growth condition rules out a non-holomorphic constant term in the Fourier expansion). Letting $\ell\in\N$, the Bol operator can be written in terms of the {\it iterated Maass raising operator} 
\begin{align} \label{eq:bolidentity}
	(-4\pi)^{\ell-1} \D^{\ell-1} = R_{2-\ell}^{\ell-1} \coloneqq R_{\ell-2} \circ \ldots \circ R_{2-\ell+2} \circ R_{2-\ell}, \quad R_{2-\ell}^0 \coloneqq \mathrm{id}, \quad R_\k \coloneqq 2i \frac{\partial}{\partial \tau} + \frac{\k}{v}. 
\end{align}
This identity is called {\it Bol's identity}, a proof can for example be found in \cite{thebook}*{Lemma 5.3}. 

\subsection{Locally harmonic Maass forms} \label{sec:LHMF}

In \cite{BKK}, so-called locally harmonic Maass forms, were introduced (for negative weights). See also \cite{hoevel} for the case of weight $0$.

\begin{defn}[\protect{\cite{BKK}*{Section $2$}}] \label{defn:LHMF}
	A function $f\colon \H\to\C$ is called a \textit{locally harmonic Maass form of weight $\k$} with exceptional set $E_D$, if it obeys the following four conditions:
	\begin{enumerate}[leftmargin=*,label=(\arabic*)]
		\item For every $\g\in\SL_2(\Z)$ we have $f|_\k\g=f$.
		
		\item For all $\t\in\H\setminus E_D$, there exists a neighborhood of $\t$, in which $f$ is real-analytic and in which we have $\De_{\k}(f)=0$.
		
		\item For every $\t\in E_D$, we have that
		\[
			f(\t) = \tfrac12\lim_{\eps\to0^+} (f(\t+i\eps)+f(\t-i\eps)).
		\]
		
		\item The function $f$ exhibits at most polynomial growth towards $i\infty$.
	\end{enumerate}
\end{defn}

Lastly, we define the various notions of singularities appearing in this paper.
\begin{defn}
Let $f\colon\H\setminus E_D\to\C$.
\begin{enumerate}[leftmargin=*,label=(\arabic*)]
\item We say that $f$ has {\it jumping singularities} on $E_D$ if there exists $\tau \in E_D$ such that
\[
	\lim_{\eps\to0^+} (f(\t+i\eps)-f(\t-i\eps)) \in \C \setminus \{0\}.
\]
Note that this limit depends on the geodesic $S_Q$ on which $\t$ is located. 

\item We say that $f$ has {\it continuously removable singularities} on $E_D$ if, for all $\t\in E_D$,
\[
	\lim_{\eps\to0^+} (f(\t+i\eps)-f(\t-i\eps)) = 0.
\]

\item We say that $f$ has {\it differentially removable singularities} on $E_D$ if $f$ is differentiable on $\H\setminus E_D$ and $f'$ has continuously removable singularities on $E_D$.
\end{enumerate}
\end{defn}

\section{Proof of Theorem \ref{thm:OmegaMmain} and of Corollary \ref{cor:omegacor}} \label{sec:Omegasec}

\subsection{Knopp's claims on \texorpdfstring{$\psi_{k+1,D}$}{\textpsi(k+1)D}}

We now discuss the initial claims of Knopp on $\psi_{k+1,D}$. 

\begin{prop}\label{prop:psiprop}
	\ \begin{enumerate}[leftmargin=*,label=\rm{(\arabic*)}]
		\item The functions $\psi_{k+1,D}$ converge absolutely on $\H$ and uniformly towards $i\infty$.
		
		\item For $n\in\N$, we have
		\[
			\D^{2n-1}\left(\Log\left(\frac{\t-\a_Q^-}{\t-\a_Q^+}\right)Q(\t,1)^{n-1}\right) = -i(2\pi)^{2n-1}(n-1)!^2D^{n-\frac12}\frac{1}{Q(\t,1)^n}.
		\]
		
		\item The functions $\psi_{k+1,D}$ satisfy $\psi_{k+1,D}(\t+1)=\psi_{k+1,D}(\t)$ and \eqref{eq:psitransf}.
	\end{enumerate}
\end{prop}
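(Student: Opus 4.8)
I would reduce absolute convergence to two standard inputs. The first is the uniform estimate
\[
	\Log\left(\frac{\t-\a_Q^-}{\t-\a_Q^+}\right) = \Log\left(1+\frac{\a_Q^+-\a_Q^-}{\t-\a_Q^+}\right), \qquad \vt{\frac{\a_Q^+-\a_Q^-}{\t-\a_Q^+}} = \frac{\sqrt D}{\vt a\,\vt{\t-\a_Q^+}} \le \frac{\sqrt D}{v},
\]
which gives $\vt{\Log(\tfrac{\t-\a_Q^-}{\t-\a_Q^+})}\ll\sqrt D/v$ uniformly in $Q\in\Qc_D$ once $v>2\sqrt D$ (and is locally bounded in $Q$ throughout $\H$); this is the bound referred to as \eqref{eq:loglimit} elsewhere in the paper. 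The second is the classical count $\#\{Q\in\Qc_D:\vt{Q(\t,1)}\le X\}\ll_\t X^2$ (which underlies Zagier's construction of the $f_{\k,D}$ for $\k>1$), giving $\sum_{Q\in\Qc_D}\vt{Q(\t,1)}^{-s}<\infty$ for $\re(s)>2$, in particular for $s=k+1\ge3$, locally uniformly on $\H$, and with $\sum_{Q\in\Qc_D}\vt{Q(\t,1)}^{-(k+1)}\ll v^{-k-2}$ as $v\to\infty$. Multiplying the two bounds gives absolute and locally uniform convergence of $\psi_{k+1,D}$ on $\H$ and $\psi_{k+1,D}(\t)\ll_D v^{-k-3}\to0$, which is the uniform convergence towards $i\infty$ (and also shows $\psi_{k+1,D}$ vanishes at the cusp).

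\textbf{Part (2).} Since $\D=\frac{1}{2\pi i}\frac{\del}{\del\t}$, it suffices to prove the polynomial-coefficient identity
\[
	\left(\frac{\del}{\del\t}\right)^{2n-1}\left(\Log\left(\frac{\t-\a_Q^-}{\t-\a_Q^+}\right)Q(\t,1)^{n-1}\right) = (-1)^n\,((n-1)!)^2\,D^{n-\frac12}\,\frac{1}{Q(\t,1)^n}
\]
(whence (2) upon passing to $\D^{2n-1}$), which I would establish by induction on $n$. The base case $n=1$ is the partial-fraction computation $\frac{\del}{\del\t}\Log(\tfrac{\t-\a_Q^-}{\t-\a_Q^+})=\frac{1}{\t-\a_Q^-}-\frac{1}{\t-\a_Q^+}=\frac{\a_Q^--\a_Q^+}{(\t-\a_Q^-)(\t-\a_Q^+)}=-\sqrt D\,Q(\t,1)^{-1}$, using $(\t-\a_Q^+)(\t-\a_Q^-)=Q(\t,1)/a$ and $\a_Q^+-\a_Q^-=\sqrt D/a$. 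For the inductive step I would write $\Log(\cdots)Q(\t,1)^n=\big(\Log(\cdots)Q(\t,1)^{n-1}\big)Q(\t,1)$ and apply the Leibniz rule; since $Q(\t,1)$ is quadratic in $\t$, only the three terms with $j\in\{2n-1,2n,2n+1\}$ survive, and substituting the inductive formula together with its first two $\del$-derivatives produces a rational expression in $Q(\t,1)$, $Q'(\t,1)$ that collapses to $(-1)^{n+1}(n!)^2D^{n+\frac12}Q(\t,1)^{-n-1}$ after using $Q''(\t,1)=2a$ and Lemma \ref{lem:Qtricks} in the form $Q'(\t,1)^2=D+4aQ(\t,1)$.

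\textbf{Part (3).} Put $T\coloneqq\pmat{1&1\\0&1}$, $S\coloneqq\pmat{0&-1\\1&0}$. For periodicity: $Q\mapsto Q\circ T$ permutes $\Qc_D$, and by \eqref{eq:GammaOnQ} and the definition of $\a_Q^\pm$ one has $(Q\circ T)(\t,1)=Q(\t+1,1)$ and $\a_{Q\circ T}^\pm=\a_Q^\pm-1$, so re-indexing by $Q\mapsto Q\circ T$ turns $\psi_{k+1,D}(\t+1)$ into $\psi_{k+1,D}(\t)$. For \eqref{eq:psitransf}: by \eqref{eq:GammaOnQ}, $Q(-1/\t,1)=\t^{-2}(Q\circ S)(\t,1)$, so after re-indexing by $R\coloneqq Q\circ S$ (i.e. $Q=R\circ S^{-1}$),
\[
	\t^{-2k-2}\psi_{k+1,D}\left(-\tfrac1\t\right) = \sum_{R\in\Qc_D}\frac{1}{R(\t,1)^{k+1}}\,\Log\left(\frac{-1/\t-\a_{R\circ S^{-1}}^-}{-1/\t-\a_{R\circ S^{-1}}^+}\right).
\]
Since $\SL_2(\Z)$ acts on the roots by Möbius transformations (equivalently, $R\circ S^{-1}=[c,-b,a]$ for $R=[a,b,c]$), one has $\a_{R\circ S^{-1}}^\pm=-1/\a_R^\pm$ with matching signs — here I use $ac\ne0$, which holds because $D$ is a non-square — and a short computation turns the argument of $\Log$ into $\frac{\t-\a_R^-}{\t-\a_R^+}\cdot\frac{\a_R^+}{\a_R^-}$. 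The crux is to split the \emph{principal} logarithm of this product: using that $\im(\tfrac{\t-\a_R^-}{\t-\a_R^+})$ has sign $-\sgn(R)$ while $\frac{\a_R^+}{\a_R^-}\in\R\setminus\{0\}$ has sign $\sgn(\a_R^+\a_R^-)=\sgn(ac)$, one obtains
\[
	\Log\left(\frac{\t-\a_R^-}{\t-\a_R^+}\cdot\frac{\a_R^+}{\a_R^-}\right) = \Log\left(\frac{\t-\a_R^-}{\t-\a_R^+}\right) + \log\vt{\frac{\a_R^+}{\a_R^-}} + \eps_R,
\]
where $\eps_R=0$ if $ac>0$, $\eps_R=\pi i$ if $c<0<a$, and $\eps_R=-\pi i$ if $a<0<c$. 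Summing over $R$ (all rearrangements being legitimate by the absolute convergence from (1)) and subtracting $\psi_{k+1,D}(\t)$ leaves $\sum_R\frac{\log\vt{\a_R^+/\a_R^-}}{R(\t,1)^{k+1}}+\sum_R\frac{\eps_R}{R(\t,1)^{k+1}}$. Finally, the involution $R\mapsto-R$ interchanges $\{[a,b,c]:c<0<a\}$ and $\{[a,b,c]:a<0<c\}$ while multiplying $R(\t,1)^{-(k+1)}$ by $(-1)^{k+1}=-1$ (as $k\in2\N$), so the second sum equals $-2\pi i\sum_{R=[a,b,c],\,a<0<c}R(\t,1)^{-(k+1)}$, a finite sum; this is exactly \eqref{eq:psitransf}.

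\textbf{Main obstacle.} The only delicate step is the principal-branch bookkeeping in (3): one must correctly identify the correction $\eps_R\in\{0,\pm\pi i\}$ when splitting $\Log$ of the product, and then recognize that the $\{c<0<a\}$ contribution recombines — via $R\mapsto-R$ and the oddness of $k+1$ — with the $\{a<0<c\}$ contribution into the single finite sum appearing in \eqref{eq:psitransf}; this is precisely where the sign typo in Knopp's $(4.6)$ mentioned in the introduction gets corrected. Parts (1) and (2), and the $T$-periodicity in (3), are routine.
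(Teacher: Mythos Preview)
Your proposal is correct and follows essentially the same approach as the paper: Part~(1) reduces to absolute convergence of $f_{\k,D}$, Part~(2) is the same Leibniz-rule induction using Lemma~\ref{lem:Qtricks}, and Part~(3) is the same re-indexing under $T$ and $S$ followed by principal-branch bookkeeping and the $Q\mapsto-Q$ symmetry. The only notable cosmetic difference is in the inversion step of Part~(3): the paper computes $\Log(z)-\Log(w)$ via \eqref{eq:logrule} with $z/w=\a_Q^+/\a_Q^-$ (which forces an ad hoc extension of $\Log$ to the negative reals), whereas you split $\Log\bigl(\tfrac{\t-\a_R^-}{\t-\a_R^+}\cdot\tfrac{\a_R^+}{\a_R^-}\bigr)$ directly into $\Log$ of a non-real factor times a nonzero real, which is slightly cleaner but leads to the same correction and the same final recombination under $R\mapsto-R$.
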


\begin{proof}
\begin{enumerate}[leftmargin=*,label=(\arabic*), wide, labelwidth=0pt, labelindent=0pt]
\item Let $Q=[a,b,c]$ and suppose that $v>1$. Since $\a_Q^\pm\in\R$ are the zeros of $Q$, we have $Q(\t,1)=a(\t-\a_Q^+)(\t-\a_Q^-)$ and $v>1$ implies that $\vert\t-\a_Q^+\vert>1$. Using $\vt{a}\ge1$ gives
	\[
		\vt{\Log\left(\frac{\t-\a_Q^-}{\t-\a_Q^+}\right)} \le \vt{\log\left(\frac{\vt{Q(\t,1)}}{\vt{a}\vt{\left(\t-\a_Q^+\right)}^2}\right)} + \pi \le \vt{\log\vt{Q(\t,1)}} + \pi,
	\]
	and (1) thus follows by the properties of $f_{\k,D}$ for $\k>1$ (see \cite{zagier75}).
\item We proceed by induction on $n$. The claims for $n=1$ and $n=2$ follow by computing
	\begin{equation} \label{eq:logdiff}
		\frac{\del}{\del\t} \Log\left(\frac{\t-\a_Q^-}{\t-\a_Q^+}\right) = -\frac{\sqrt{D}}{Q(\t,1)}, \quad
		\frac{\del^3}{\del\t^3} \left(\Log\left(\frac{\t-\a_Q^-}{\t-\a_Q^+}\right)Q(\tau,1)\right) = \frac{D^{\frac{3}{2}}}{Q(\tau,1)^2}, 
	\end{equation}
	utilizing Lemma \ref{lem:Qtricks} for $n=2$. To proceed with the induction step, we define for $n\in\N$
	\[
		\ff_n(\t) \coloneqq \Log\left(\frac{\t-\a_Q^-}{\t-\a_Q^+}\right)Q(\t,1)^{n-1},\qquad c_n \coloneqq (-1)^{n}(n-1)!^2.
	\]
	Since $Q$ is a polynomial of degree 2, we have, using the Leibniz rule,
	\begin{align*}
		\frac{\del^{2n+1}}{\del\t^{2n+1}} \mathfrak{f}_{n+1}(\t) &= \frac{\del^{2n+1}}{\del\t^{2n+1}} \left(\mathfrak{f}_{n}(\t)Q(\tau,1)\right) \\
		 &= \mathfrak{f}_n^{(2n+1)}(\t)Q(\t,1) + (2n+1)\mathfrak{f}_n^{(2n)}(\t)Q'(\t,1) + (2n+1)n\mathfrak{f}_n^{(2n-1)}Q''(\t,1).
	\end{align*}
	To apply the induction hypothesis, we write $\ff_n^{(2n)}(\t)=\frac{\del}{\del\t}\ff_n^{(2n-1)}(\t)$. Combining with the second identity of Lemma \ref{lem:Qtricks} then yields 
	\[
		\frac{\del^{2n+1}}{\del\t^{2n+1}} \mathfrak{f}_{n+1}(\t)= -\frac{n^2c_nD^{n+\frac12}}{Q(\t,1)^{n+1}}.
	\]
	Simplifying gives the claim.
\item Translation invariance of $\psi_{k+1,D}$ follows immediately from \eqref{eq:GammaOnQ} and the fact that
	\[
		[a,b,c]\circ\mat{1&1\\0&1}^{-1} = [a,-2a+b,a-b+c].
	\]
	Again using \eqref{eq:GammaOnQ} and the fact that
	\[
		[a,b,c]\circ\mat{0&-1\\1&0}^{-1} = [c,-b,a],
	\]
	we obtain that 
	\begin{align*}
		\t^{-2k-2}\psi_{k+1,D}\left(-\frac1\t\right)-\psi_{k+1,D}(\t) &= \sum_{Q\in\Qc_D} \frac{\Log\left(\frac{-\frac1\t-\frac{b-\sqrt{D}}{2c}}{-\frac1\t- \frac{b+\sqrt{D}}{2c}}\right) - \Log\left(\frac{\t-\frac{-b-\sqrt{D}}{2a}}{\t- \frac{-b+\sqrt{D}}{2a}}\right)}{Q(\t,1)^{k+1}}.
	\end{align*}
	Next, we recall that for $z,w\in\C\setminus\R$ 
	\begin{equation}\label{eq:logrule}
		\Log(z) - \Log(w) = \Log\left(\frac zw\right) + i\left(\Arg(z)-\Arg(w)-\Arg\left(\frac zw\right)\right).
	\end{equation}
	Choosing $z = \frac{-\frac1\t-\frac{b-\sqrt{D}}{2c}}{-\frac1\t- \frac{b+\sqrt{D}}{2c}}$, $w=\frac{\t-\frac{-b-\sqrt{D}}{2a}}{\t- \frac{-b+\sqrt{D}}{2a}}$ yields
	\begin{equation} \label{eq:modularinversion}
		\frac{z}{w} = \frac{\left(-\frac1\t-\frac{b-\sqrt{D}}{2c}\right) \left(\t-\frac{-b+\sqrt{D}}{2a}\right)}{\left(-\frac1\t-\frac{b+\sqrt{D}}{2c}\right) \left(\t-\frac{-b-\sqrt{D}}{2a}\right)} = \frac{\a_Q^+}{\a_Q^-} = \sgn(ac)\vt{\frac{\a_{Q}^+}{\a_{Q}^-}}.
	\end{equation}
	Hence $\Arg(z)=\Arg(\sgn(ac)w)$ and thus $\Arg(z)-\Arg(w)-\Arg(\frac zw)$ vanishes if $\sgn(ac)=1$. Thus the corresponding terms do not contribute to $\Arg(z)-\Arg(w)-\Arg(\frac zw)$. If $\sgn(ac)=-1$, then we extend $\Log$ by its principal value $\Log(x)=\log\vt{x}+\pi i$ for $x\in\R^-$. Then we use that
	\begin{align} \label{eq:argdiff}
		\Arg(-w) - \Arg(w) = -\sgn(\im(w))\pi,
	\end{align}
	and $\Arg(\frac zw) = \pi$. Hence, $\Arg(z)-\Arg(w)-\Arg(\frac zw)$ vanishes if $\sgn(ac)=-1$ and $\im(w) < 0$. To determine the sign of $\im(w)$, we calculate that
	\begin{align}\nonumber
		\frac{\t-\a_Q^-}{\t-\a_Q^+} &= \frac{\a_Q^+\a_Q^--\left(\a_Q^++\a_Q^-\right)u+u^2+v^2}{\vt{\t-\a_Q^+}^2} - \frac{i\left(\a_Q^+-\a_Q^-\right)v}{\vt{\tau-\a_Q^+}^2}\\
		\label{eq:loginput}	
		&= \frac{1}{\vt{\t-\a_Q^+}^2}\left(\frac{v Q_\tau}{a}-i\frac{\sqrt{D}}{a}v\right).
	\end{align}
	Thus, we have $\im(w)>0$ if and only if $a<0$. We conclude by \eqref{eq:logrule} and \eqref{eq:argdiff} that 
	\begin{align*}
	\Arg(z)-\Arg(w)-\Arg\left(\frac zw\right) =\begin{cases}
			-2\pi &\text{if } a<0<c,\\
			0 &\text{otherwise}. 
		\end{cases}
	\end{align*}
	Thus
	\[
		\t^{-2k-2}\psi_{k+1,D}\left(-\frac1\t\right)-\psi_{k+1,D}(\t) = \sum_{Q \in \Qc_D} \frac{\Log\left(\frac{\a_Q^+}{\a_Q^-}\right)}{Q(\tau,1)^{k+1}} - 2\pi i \sum_{\substack{Q\in\Qc_D\\a<0<c}} \frac{1}{Q(\t,1)^{k+1}}.
	\]
	By mapping $Q\mapsto-Q$, we arrive at 
	\begin{align} \label{eq:reallog}
\sum_{Q \in \Qc_D} \frac{\Log\left(\frac{\a_Q^+}{\a_Q^-}\right)}{Q(\tau,1)^{k+1}} = \sum_{Q \in \Qc_D} \frac{\log\vt{\frac{\a_Q^+}{\a_Q^-}}}{Q(\tau,1)^{k+1}} + \pi i \sum_{\substack{Q=[a,b,c] \in \Qc_D \\ \sgn(ac)=-1}} \frac{1}{Q(\tau,1)^{k+1}} = \sum_{Q \in \Qc_D} \frac{\log\vt{\frac{\a_Q^+}{\a_Q^-}}}{Q(\tau,1)^{k+1}}.
	\end{align}
This gives the claim. \qedhere
\end{enumerate}
\end{proof}

\begin{rmk}
	By \eqref{eq:loginput} the branch cut of $\Log(\frac{w-\a_Q^-}{w-\a_Q^+})$ is the interval $[\a_Q^-,\a_Q^+]$ or $[\a_Q^+,\a_Q^-]$. 
\end{rmk}

\subsection{The functions \texorpdfstring{$\rho_{k+1,D}$}{\textrho(k+1)D}, \texorpdfstring{$\varphi_{k+1,D}$}{\textphi(k+1)D}, and \texorpdfstring{$\lambda_{k+1,D}$}{\textlambda(k+1)D}}

Adapting the proof of Proposition \ref{prop:psiprop} (1), (3) we deduce the following results.

\begin{prop} \label{prop:rhoprop}
	\begin{enumerate}[leftmargin=*,label=\rm{(\arabic*)}]
		\item The functions $\rho_{k+1,D}$ converge absolutely on $\H\times\H^-$ and uniformly as $\t\to i\infty$ resp.\ $w\to-i\infty$.
		
		\item We have
		\begin{align*}
			\lim_{w \to -i\infty} \rho_{k+1,D}(\tau,w) = 0,  \qquad \lim_{\tau \to i\infty} \rho_{k+1,D}(\tau,w) = 0.
		\end{align*}
		
		\item 	Let $\tau \in \H$ and $w \in \H^-$. The functions $\rho_{k+1,D}$ satisfy
		\begin{align*}
			\rho_{k+1,D}(\tau+1,w+1) &= \rho_{k+1,D}(\tau,w), \\
			\tau^{-2k-2} \rho_{k+1,D}\left(-\frac{1}{\tau}, -\frac{1}{w}\right) - \rho_{k+1,D}(\tau,w) &= \sum_{Q \in \Qc_D} \frac{\log\vt{\frac{\alpha_Q^+}{\alpha_Q^-}}}{Q(\tau,1)^{k+1}} +  2\pi i \sum_{\substack{Q = [a,b,c]  \in \Qc_D \\ a < 0 < c}} \frac{1}{Q(\tau,1)^{k+1}}.
		\end{align*}
	\end{enumerate}
\end{prop}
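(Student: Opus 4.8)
The plan is to transplant the proof of Proposition \ref{prop:psiprop}, treating the logarithmic variable $w\in\H^-$ as a spectator in the two convergence statements and carefully redoing the argument bookkeeping in the modular transformation law. For (1) and (2) I would first estimate the logarithmic factor. Writing $Q(w,1)=a(w-\a_Q^+)(w-\a_Q^-)$ and using $\vt{w-\a_Q^\pm}\ge\vt{\im(w)}$ together with $\vt a\ge1$, one bounds $\vt{\Log(\frac{w-\a_Q^-}{w-\a_Q^+})}$ by $\vt{\log\vt{Q(w,1)}}$ plus a constant that is locally uniform in $w$; exactly as in Proposition \ref{prop:psiprop} (1) the numerator then grows subpolynomially in the coefficients of $Q$, so absolute convergence on $\H\times\H^-$ follows from the absolute convergence of $\sum_{Q\in\Qc_D}\vt{Q(\t,1)}^{-k-1}$ (weight $2k+2>2$). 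For the behaviour as $w\to-i\infty$ I would instead write $\frac{w-\a_Q^-}{w-\a_Q^+}=1+\frac{\sqrt D/a}{w-\a_Q^+}$, which gives $\vt{\Log(\frac{w-\a_Q^-}{w-\a_Q^+})}=O(\vt{\im(w)}^{-1})$ uniformly in $Q$ and hence both the claimed uniformity and $\lim_{w\to-i\infty}\rho_{k+1,D}(\t,w)=0$. For the behaviour as $\t\to i\infty$, note that $a\neq0$ since $D$ is not a square, so $\vt{Q(\t,1)}\ge\vt av^2\ge v^2$ for $v\ge1$; hence $\sum_{Q}\vt{Q(\t,1)}^{-k-1}\le v^{-2}\sum_Q\vt{Q(\t,1)}^{-k}$, and the last sum converges (weight $2k>2$) and stays bounded as $v\to\infty$, which together with the bound on the logarithm yields $\lim_{\t\to i\infty}\rho_{k+1,D}(\t,w)=0$.

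The translation law in (3) is immediate: the substitution $Q\mapsto Q\circ\pmat{1&1\\0&1}$ is a bijection of $\Qc_D$ sending $\a_Q^\pm\mapsto\a_Q^\pm-1$ and $Q(\cdot,1)\mapsto Q(\cdot+1,1)$, so applying it to the defining sum absorbs the shifts $\t\mapsto\t+1$ and $w\mapsto w+1$ simultaneously.

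For the inversion law -- the main computation -- use \eqref{eq:GammaOnQ} with $S\coloneqq\pmat{0&-1\\1&0}$, so that $Q(-1/\t,1)^{-k-1}=\t^{2k+2}(Q\circ S)(\t,1)^{-k-1}$; after the bijection $Q\mapsto Q\circ S^{-1}=[c,-b,a]$ of $\Qc_D$, whose roots satisfy $\a_{Q\circ S^{-1}}^\pm=-1/\a_Q^\pm$, the term $\t^{-2k-2}\rho_{k+1,D}(-1/\t,-1/w)$ becomes $\sum_{Q}Q(\t,1)^{-k-1}\Log(\frac{-1/w+1/\a_Q^-}{-1/w+1/\a_Q^+})$. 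Since $\frac{-1/w+1/\a_Q^-}{-1/w+1/\a_Q^+}=\frac{\a_Q^+}{\a_Q^-}\cdot\frac{w-\a_Q^-}{w-\a_Q^+}$, each summand of $\t^{-2k-2}\rho_{k+1,D}(-1/\t,-1/w)-\rho_{k+1,D}(\t,w)$ is $\Log(Z)-\Log(W)$ with $W\coloneqq\frac{w-\a_Q^-}{w-\a_Q^+}$ and $Z\coloneqq\frac{\a_Q^+}{\a_Q^-}W$, which by \eqref{eq:logrule} equals $\Log(\frac{\a_Q^+}{\a_Q^-})+i(\Arg(Z)-\Arg(W)-\Arg(\frac{\a_Q^+}{\a_Q^-}))$. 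Because $\a_Q^+\a_Q^-=c/a$, the real number $\a_Q^+/\a_Q^-$ has sign $\sgn(ac)$: if $\sgn(ac)>0$ the argument correction vanishes, and if $\sgn(ac)<0$ one extends $\Log$ to $\R^-$ by its principal value and invokes \eqref{eq:argdiff} so that the correction is $-\sgn(\im(W))\pi-\pi$; a computation as in \eqref{eq:loginput} gives $\im(W)=-\frac{\sqrt D}{a}\frac{\im(w)}{\vt{w-\a_Q^+}^2}$, which has sign $\sgn(a)$ because $\im(w)<0$, so the correction is $-2\pi$ precisely when $a>0>c$ and $0$ otherwise. Hence $\t^{-2k-2}\rho_{k+1,D}(-1/\t,-1/w)-\rho_{k+1,D}(\t,w)=\sum_Q Q(\t,1)^{-k-1}\Log(\a_Q^+/\a_Q^-)-2\pi i\sum_{a>0>c}Q(\t,1)^{-k-1}$, and finally the map $Q\mapsto-Q$ (for which $(-Q)(\t,1)^{k+1}=-Q(\t,1)^{k+1}$ as $k$ is even) converts $-2\pi i\sum_{a>0>c}$ into $+2\pi i\sum_{a<0<c}$ and cancels the term $\pi i\sum_{\sgn(ac)<0}Q(\t,1)^{-k-1}$ coming from $\Log(\a_Q^+/\a_Q^-)=\log\vt{\a_Q^+/\a_Q^-}+\pi i$ when $\a_Q^+/\a_Q^-<0$, leaving the asserted identity; all rearrangements are legitimate because the full series converge absolutely by (1) and the argument-correction term is supported on the finite set $\{[a,b,c]\in\Qc_D:a<0<c\}$.

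The hard part will be the branch-of-logarithm bookkeeping in the inversion step: applying the $\R^-$ principal-value convention for $\Log$ consistently and, above all, correctly tracking the sign of $\im(W)$, which is the opposite of the corresponding sign in Proposition \ref{prop:psiprop} (3) because here the logarithmic variable lives in $\H^-$ rather than in $\H$. This sign flip is exactly what produces the $+2\pi i\sum_{a<0<c}$ of the claim, in contrast to the $-2\pi i\sum_{a<0<c}$ of \eqref{eq:psitransf}.
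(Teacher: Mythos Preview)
Your proposal is correct and follows exactly the approach the paper indicates: the paper's proof is just the one-line instruction ``Adapting the proof of Proposition~\ref{prop:psiprop} (1), (3) we deduce the following results,'' and you have carried out precisely that adaptation, correctly identifying that the sign of $\im(W)$ flips relative to \eqref{eq:loginput} because $\im(w)<0$, which is what converts the $-2\pi i\sum_{a<0<c}$ of \eqref{eq:psitransf} into the $+2\pi i\sum_{a<0<c}$ claimed here.
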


\begin{proof}
Part (1) follows along the same lines as Proposition \ref{prop:psiprop} (1), and part (2) is an immediate consequence of uniform continuity of $\rho_{k+1,D}$ towards $i\infty$ (together with cuspidality of the $f_{\k,D}$ \cite{zagier75}). The first assertion of part (3) follows verbatim to translation invariance of $\psi_{k+1,D}$ (see Proposition \ref{prop:psiprop} (3)), and it remains to prove the second assertion of part (3). The idea is again an application of the rule from \eqref{eq:logrule} 
\begin{align} \label{eq:logrule2}
\Log(z_1) - \Log(z_2) = \Log\left(\frac{z_1}{z_2}\right) + i\left(\Arg(z_1)-\Arg(z_2)-\Arg\left(\frac{z_1}{z_2}\right)\right),
\end{align}
where $z_1 \coloneqq \frac{-\frac{1}{w}-\frac{b-\sqrt{D}}{2c}}{-\frac{1}{w}- \frac{b+\sqrt{D}}{2c}}$, and $z_2 \coloneqq \frac{w-\frac{-b-\sqrt{D}}{2a}}{w- \frac{-b+\sqrt{D}}{2a}}$. By \eqref{eq:modularinversion}, with $\tau$ replaced by $w$, we have
\begin{align*}
\frac{z_1}{z_2} = \frac{\left(-\frac{1}{w}-\frac{b-\sqrt{D}}{2c}\right) \left(w-\frac{-b+\sqrt{D}}{2a}\right)}{\left(-\frac{1}{w}-\frac{b+\sqrt{D}}{2c}\right) \left(w-\frac{-b-\sqrt{D}}{2a}\right)} = \frac{\a_Q^+}{\a_Q^-} = \sgn(ac)\vt{\frac{\a_Q^+}{\a_Q^-}}.
\end{align*}
It follows that $\Arg(z_1) = \Arg(\sgn(ac)z_2)$, and therefore $\Arg(z_1)-\Arg(z_2)-\Arg(\frac{z_1}{z_2}) = 0$ if $\sgn(ac) = 1$. If $\sgn(ac)=-1$, then \eqref{eq:argdiff} yields that $\Arg(-z_2)-\Arg(z_2) - \Arg(\frac{-z_1}{z_1}) = 0$ if $\im(z_2) < 0$ in addition. The sign of $\im(z_2)$ is given by \eqref{eq:loginput}, namely
	\[
		z_2 = \frac{w-\frac{-b-\sqrt{D}}{2a}}{w-\frac{-b+\sqrt{D}}{2a}}=\frac{\vt{w}^2 + \frac ba\re(w) + \frac ca - i\frac{\sqrt{D}}{a}\im(w)}{\vt{w+\frac{b+\sqrt{D}}{2a}}^2} \eqqcolon \frac{z_3}{\vt{w+\frac{b+\sqrt{D}}{2a}}^2}.
	\]
Hence, we have $\Arg(z_1)-\Arg(z_2)-\Arg\big(\frac{z_1}{z_2}\big) \neq 0$ if and only if $\sgn(ac) = -1$ and $\sgn(\im(z_3)) > 0$. Since $w \in \H^-$, we have
	\[
	\sgn(\im(z_3)) = -\sgn(a)\sgn(\im(w)) = \sgn(a),
	\]
	and thus we infer that 
	\begin{equation*}
\left(\Arg(z_1)-\Arg(z_2)-\Arg\left(\frac{z_1}{z_2}\right)\right) = \begin{cases}
-2\pi & \text{if } c < 0 < a, \\
0 & \text{otherwise}.
\end{cases}
	\end{equation*}
By changing $Q \mapsto-Q$ in both resulting expressions and using \eqref{eq:reallog}, we arrive at the claim.
\end{proof}

We next cite Parson's \cite{parson} result on her modular integrals $\varphi_{k+1,D}$.

\begin{lemma}[\protect{\cite{parson}*{Theorem 3.1}}] \label{lem:phitransf}
	The functions $\varphi_{k+1,D}$ satisfy 
\begin{align*}
\varphi_{k+1,D}(\t+1)&=\varphi_{k+1,D}(\t), \\
		\tau^{-2k-2}\varphi_{k+1,D}\left(-\frac{1}{\tau}\right) - \varphi_{k+1,D}(\tau) &= - \sum_{\substack{Q = [a,b,c]  \in \Qc_D \\ ac < 0}} \frac{\sgn{(Q)}}{Q(\tau,1)^{k+1}}
= 2\sum_{\substack{Q = [a,b,c]  \in \Qc_D \\ a < 0 < c}} \frac{1}{Q(\tau,1)^{k+1}}.
\end{align*}
	Furthermore, we have 
\begin{align*}
\lim_{\t\to i\infty}\varphi_{k+1,D}(\t)=0.
\end{align*}
\end{lemma}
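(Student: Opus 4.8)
The plan is to reproduce, in this logarithm-free setting, the computation carried out for Proposition~\ref{prop:psiprop}~(3): everything reduces to the $\SL_2(\Z)$-action on $\Qc_D$ recorded in \eqref{eq:GammaOnQ} together with the involution $Q\mapsto-Q$. Convergence and the behaviour at the cusp come first: since $k\in2\N$ we have $k+1\ge3>1$, so $\varphi_{k+1,D}$ is dominated term-by-term by Zagier's absolutely convergent series $f_{k+1,D}$, and by the properties of $f_{k+1,D}$ (used also in the proof of Proposition~\ref{prop:psiprop}~(1)) the series for $\varphi_{k+1,D}$ converges absolutely on $\H$ and uniformly as $\t\to i\infty$. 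Since $D$ is not a square, no $Q\in\Qc_D$ has $a=0$, so each summand $Q(\t,1)^{-(k+1)}$ tends to $0$, and interchanging limit and sum gives $\lim_{\t\to i\infty}\varphi_{k+1,D}(\t)=0$.

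Next I would treat the two generators of $\SL_2(\Z)$ via \eqref{eq:GammaOnQ}. With $\g=\pmat{1&1\\0&1}$ one gets $Q(\t+1,1)=(Q\circ\g)(\t,1)$; since $[a,b,c]\circ\pmat{1&1\\0&1}^{-1}=[a,-2a+b,a-b+c]$ has the same leading coefficient as $[a,b,c]$ (as in the proof of Proposition~\ref{prop:psiprop}~(3)), the substitution $Q\mapsto Q\circ\pmat{1&1\\0&1}^{-1}$ permutes $\{Q=[a,b,c]\in\Qc_D:a>0\}$, so $\varphi_{k+1,D}(\t+1)=\varphi_{k+1,D}(\t)$. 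With $\g=\pmat{0&-1\\1&0}$ one gets $Q\left(-\frac1\t,1\right)=\t^{-2}(Q\circ\g)(\t,1)$; since $[a,b,c]\circ\pmat{0&-1\\1&0}^{-1}=[c,-b,a]$, the substitution $Q\mapsto Q\circ\pmat{0&-1\\1&0}$ turns the constraint $a>0$ into $c>0$, whence
\[
	\t^{-2k-2}\varphi_{k+1,D}\left(-\frac1\t\right)=\sum_{\substack{Q=[a,b,c]\in\Qc_D\\c>0}}\frac{1}{Q(\t,1)^{k+1}}.
\]

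Subtracting $\varphi_{k+1,D}(\t)$ and recalling that $a,c\ne0$ for every $Q\in\Qc_D$, the terms with $a>0$ and $c>0$ cancel, leaving
\[
	\sum_{\substack{Q=[a,b,c]\in\Qc_D\\a<0<c}}\frac{1}{Q(\t,1)^{k+1}}-\sum_{\substack{Q=[a,b,c]\in\Qc_D\\c<0<a}}\frac{1}{Q(\t,1)^{k+1}}.
\]
Applying $Q\mapsto-Q$ to the second sum maps its index set bijectively onto that of the first and multiplies $Q(\t,1)^{k+1}$ by $(-1)^{k+1}=-1$, so this difference equals $2\sum_{Q\in\Qc_D,\,a<0<c}Q(\t,1)^{-(k+1)}$, the rightmost expression in the lemma. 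The middle expression is identified with the same quantity by writing $\{ac<0\}=\{a<0<c\}\sqcup\{c<0<a\}$, noting that $-\sgn(Q)$ equals $+1$ on the first piece and $-1$ on the second, and applying $Q\mapsto-Q$ once more.

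I do not expect a genuine obstacle: the whole argument consists of reindexing absolutely convergent series via \eqref{eq:GammaOnQ}. The two points demanding attention are the parity identity $(-1)^{k+1}=-1$, which is precisely where the hypothesis $k\in2\N$ enters, and checking that each substitution restricts to a bijection of the relevant subset of $\Qc_D$; both are immediate from the explicit form identities above.
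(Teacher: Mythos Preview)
Your argument is correct. The paper itself does not prove this lemma: it is stated as a citation of Parson \cite{parson}*{Theorem~3.1} and no proof is given. What you have written is a clean self-contained verification, and it follows exactly the template the paper uses elsewhere, namely in the proof of Proposition~\ref{prop:psiprop}~(3): reindex via \eqref{eq:GammaOnQ} using the explicit formulas for $[a,b,c]\circ\pmat{1&1\\0&1}^{-1}$ and $[a,b,c]\circ\pmat{0&-1\\1&0}^{-1}$, then exploit the involution $Q\mapsto-Q$ together with $(-1)^{k+1}=-1$. So there is nothing to compare against beyond noting that your proof is the natural specialization of the paper's own method for $\psi_{k+1,D}$ to the simpler, logarithm-free series $\varphi_{k+1,D}$.
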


We continue with some properties of $\l_{k+1,D}$.
\begin{prop}\label{prop:lambdaprop}
\ 
	\ \begin{enumerate}[leftmargin=*,label=\rm{(\arabic*)}]
		\item The functions $\l_{k+1,D}$ converge absolutely on $\H \times \H^-$, and uniformly as $\tau\to i\infty$ resp.\ $w\to -i\infty$.
		\item Let $\tau \in \H$ and $w \in \H^-$. The functions $\lambda_{k+1,D}$ are bimodular of weight $(2k+2, 0)$, that is 
		\begin{align*}
		\lambda_{k+1,D}\left(\tau+1,w+1\right) = \lambda_{k+1,D}\left(\tau,w\right), \qquad \lambda_{k+1,D}\left(-\frac{1}{\tau},-\frac{1}{w}\right) = \tau^{2k+2}\lambda_{k+1,D}\left(\tau,w\right).
		\end{align*}
		\item We have
		\begin{align*}
		\lim_{w \to -i\infty} \lambda_{k+1,D}(\tau,w) = -2\pi i\varphi_{k+1,D}(\tau), \qquad \lim_{\tau \to i\infty} \lambda_{k+1,D}(\tau,w) = 0.
		\end{align*}
		\item We have 
		\[
			\l_{k+1,D}(\t,w) = \sum_{Q\in\Qc_D} \frac{\Log\left(\frac{1+i\frac{Q_w}{\sqrt{D}}}{1-i\frac{Q_w}{\sqrt{D}}}\right)}{Q(\t,1)^{k+1}} = \sum_{Q\in\Qc_D} \frac{\Log\left(-\frac{\frac{Q_w}{\sqrt{D}}-i}{\frac{Q_w}{\sqrt{D}}+i}\right)}{Q(\t,1)^{k+1}}.
		\]
	\end{enumerate}
\end{prop}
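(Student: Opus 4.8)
The plan is to prove the four parts in order, each one a short adaptation of material already in place. Throughout write $Q = [a,b,c] \in \Qc_D$; since $D$ is a non-square, $a \neq 0$, so $\sgn(Q) = \sgn(a) \in \{\pm 1\}$ and $Q_w \in \R$ for $w \in \C \setminus \R$. \emph{For part (1)}, I would use that $\arctan$ is real-valued with $\vt{\arctan(t)} \le \tfrac{\pi}{2}$ for all $t \in \R$, so each summand of $\lambda_{k+1,D}$ is bounded in absolute value by $\pi\vt{Q(\tau,1)}^{-(k+1)}$, \emph{uniformly in} $w \in \H^-$. As $k \in 2\N$ we have $k+1 > 1$, so this majorant converges absolutely on $\H$ and uniformly as $\tau \to i\infty$ by the properties of $f_{k+1,D}$ used in the proof of Proposition \ref{prop:psiprop}(1); uniformity as $w \to -i\infty$ is automatic because the majorant does not involve $w$.

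\emph{For part (2)}, I would mimic the computation in Proposition \ref{prop:psiprop}(3), which is simpler here since the summands carry no branch cut. Using $(Q\circ\g)(\tau,1) = j(\g,\tau)^2 Q(\g\tau,1)$ from \eqref{eq:GammaOnQ} together with $(Q\circ\g)_\tau = Q_{\g\tau}$ from Lemma \ref{lem:InvarFactors}, and substituting $Q \mapsto Q\circ\g^{-1}$ (a bijection of $\Qc_D$), the choice $\g = \pmat{1&1\\0&1}$ gives $\lambda_{k+1,D}(\tau+1,w+1) = \lambda_{k+1,D}(\tau,w)$, and the choice $\g = \pmat{0&-1\\1&0}$ gives $\lambda_{k+1,D}(-\tfrac1\tau,-\tfrac1w) = \tau^{2k+2}\lambda_{k+1,D}(\tau,w)$; together these are the claimed bimodularity of weight $(2k+2,0)$.

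\emph{For part (3)}, I would first find the $w$-asymptotics of a single summand: with $\im(w) \to -\infty$ inside a vertical strip, $Q_w = \tfrac{1}{\im(w)}\big(a\vt{w}^2 + b\re(w) + c\big) = a\,\im(w) + o(1)$, so $Q_w \to -\sgn(Q)\infty$ and $\arctan(Q_w/\sqrt{D}) \to -\sgn(Q)\tfrac{\pi}{2}$. Passing to the limit under the sum then yields $\lim_{w\to -i\infty}\lambda_{k+1,D}(\tau,w) = -\pi i\sum_{Q\in\Qc_D}\tfrac{\sgn(Q)}{Q(\tau,1)^{k+1}} = -2\pi i\varphi_{k+1,D}(\tau)$ by \eqref{eq:phidef}. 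For the second limit, each summand tends to $0$ as $\tau \to i\infty$ since $\vt{Q(\tau,1)} \ge \vt{a}v^2 \to \infty$, and the uniform convergence from part (1) lets me interchange limit and summation. \emph{For part (4)}, I would apply term by term the elementary identity $2i\arctan(t) = \Log\!\left(\tfrac{1+it}{1-it}\right)$ for $t \in \R$ — the number $\tfrac{1+it}{1-it}$ is unimodular with argument $2\arctan(t) \in (-\pi,\pi)$, so its principal logarithm is $2i\arctan(t)$ — taking $t = Q_w/\sqrt{D}$; the second form follows from the algebraic identity $\tfrac{1+it}{1-it} = -\tfrac{t-i}{t+i}$.

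Nothing here is deep; the only step requiring genuine care is the interchange of limit and summation in part (3), since $\arctan(Q_w/\sqrt{D}) + \sgn(Q)\tfrac{\pi}{2}$ is not uniformly small over $Q \in \Qc_D$. This I would handle by the standard splitting of $\Qc_D$ into a fixed finite set — on which the finitely many summands are made uniformly close to their limits by taking $w$ near $-i\infty$ — plus a tail that is uniformly small thanks to the $w$-independent majorant from part (1).
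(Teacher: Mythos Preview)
Your argument is correct and follows essentially the same route as the paper: the bound $\vt{\arctan}\le\tfrac\pi2$ together with convergence of $f_{k+1,D}$ for (1), Lemma \ref{lem:InvarFactors} and \eqref{eq:GammaOnQ} for (2), the asymptotic $Q_w\sim a\,\im(w)$ and \eqref{eq:phidef} for (3), and the identity $2i\arctan t=\Log\!\big(\tfrac{1+it}{1-it}\big)$ for (4). Your explicit finite-set/tail justification of the limit--sum interchange in (3) is in fact more careful than the paper, which asserts this step without comment.
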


\begin{proof}
\begin{enumerate}[leftmargin=*,label=(\arabic*), wide, labelwidth=0pt, labelindent=0pt]
\item By the definition of $\l_{k+1,D}$ in \eqref{eq:rhodef}, we have 
	\begin{align*}
		\vt{\lambda_{k+1,D}(\tau)} \leq 2\sum_{Q \in \Qc_D} \frac{\vt{\arctan\left(\frac{Q_{w}}{\sqrt{D}}\right)}}{\vt{Q(\tau,1)}^{k+1}} \leq \pi \sum_{Q \in \Qc_D} \frac{1}{\vt{Q(\tau,1)}^{k+1}}.
	\end{align*}
	The claim follows by the absolute convergence of the $f_{\k,D}$ on $\H$.
\item Bimodularity is a direct consequence of Lemma \ref{lem:InvarFactors} and \eqref{eq:GammaOnQ}.
\item The assumption that $D$ is not a square guarantees that the sum defining $\lambda_{k+1,D}$ runs over quadratic forms $Q=[a,b,c]$ with $ac\ne0$. To prove the first assertion, we observe that
	\begin{align} \label{eq:Qwasymp}
		\frac{Q_{w}}{\sqrt{D}} \asymp a\im(w)
	\end{align}
	as $\im(w) \to -\infty$, and hence
	\[
		\lim_{w\to -i\infty} \arctan\left(\frac{Q_{w}}{\sqrt{D}}\right) = -\frac\pi2\sgn(Q).
	\]
	The first claim follows by the definition of $\varphi_{k+1,D}$ in \eqref{eq:phidef}. As $a\ne0$, we have $\frac{1}{\vt{Q(\t,1)}^{k+1}}\to0$ for $\t\to i\infty$. The second claim follows by (1).
\item The claim follows by rewriting the arctangent in \eqref{eq:rhodef} in terms of logarithms. \qedhere
\end{enumerate}
\end{proof}

\subsection{Proof of Theorem \ref{thm:OmegaMmain} and of Corollary \ref{cor:omegacor}}

We conclude this section with the proofs of Theorem \ref{thm:OmegaMmain} and Corollary \ref{cor:omegacor}.

\begin{proof}[Proof of Theorem \ref{thm:OmegaMmain}]
\begin{enumerate}[leftmargin=*,label=(\arabic*), wide, labelwidth=0pt, labelindent=0pt]
\item This follows by combining Propositions \ref{prop:psiprop} (3), \ref{prop:rhoprop} (3), and \ref{prop:lambdaprop} (2) with Lemma \ref{lem:phitransf}.
\item This follows by combining \eqref{eq:OmegaMdef} with Propositions \ref{prop:rhoprop} (2) and \ref{prop:lambdaprop} (2).
\item Proposition \ref{prop:psiprop} (1) along with 
	\begin{align} \label{eq:loglimit}
		\lim_{\t\to i\infty} \Log\left(\frac{\tau-\alpha_{Q}^-}{\tau-\alpha_{Q}^+}\right) = 0
	\end{align}
	implies that $\psi_{k+1,D}$ is cuspidal. By Propositions \ref{prop:rhoprop} (2), \ref{prop:lambdaprop} (3), and Lemma \ref{lem:phitransf}, every function defining $\Omega_{k+1,D}$ in \eqref{eq:OmegaMdef} is cuspidal (with respect to $\tau$).
\item As each function defining $\Omega_{k+1,D}$ in \eqref{eq:OmegaMdef} is holomorphic as a function of $\tau$, we obtain the assertion with respect to $\tau$ directly. To verify that $\Omega_{k+1,D}$ is anti-holomorphic as a function of $w$, we compute by Lemmas \ref{lem:bkkidentity} and \ref{lem:holomzoo} (1) that
	\begin{align*}
		2i \frac{\del}{\del w} \arctan\left(\frac{Q_w}{\sqrt{D}}\right) = -\frac{\sqrt{D}}{Q(w,1)}.
	\end{align*}
	By \eqref{eq:logdiff}, we deduce that
	\begin{align*}
		2i \frac{\del}{\del w} \arctan\left(\frac{Q_w}{\sqrt{D}}\right) = \frac{\del}{\del w} \Log\left(\frac{w-\a_Q^-}{w-\a_Q^+}\right).
	\end{align*}
	By \eqref{eq:rhodef} and \eqref{eq:OmegaMdef}, we conclude that
	\begin{align*}
		\frac{\partial}{\partial w}\Omega_{k+1,D}(\tau,w) = 0.
	\end{align*}
\item We first inspect the functions $\psi_{k+1,D} - \rho_{k+1,D}$. By \eqref{eq:psidef} and \eqref{eq:rhodef} we have  
	\begin{align*}
		\psi_{k+1,D}(\tau) - \rho_{k+1,D}(\tau,\ol{\tau}) = \sum_{Q \in \Qc_D} \frac{\Log\left(\frac{\tau-\alpha_Q^-}{\tau-\alpha_Q^+}\right) - \Log\left(\frac{\overline{\tau}-\alpha_Q^-}{\overline{\tau}-\alpha_Q^+}\right)}{Q(\tau,1)^{k+1}}.
	\end{align*}
	We note that
	\begin{align*}
		\Log\left(\frac{\tau-\alpha_Q^-}{\tau-\alpha_Q^+}\right) - \Log\left(\frac{\overline{\tau}-\alpha_Q^-}{\overline{\tau}-\alpha_Q^+}\right) \equiv  \Log\left(\frac{\left(\tau-\alpha_{Q}^-\right)\left(\overline{\tau}-\alpha_{Q}^+\right)}{\left(\tau-\alpha_{Q}^+\right)\left(\overline{\tau}-\alpha_{Q}^-\right)}\right) \pmod{2\pi i},
	\end{align*}
	and we determine the multiple of $2\pi i$ now. From \eqref{eq:loginput}, we deduce that
	\begin{align} \label{eq:cayleytransform}
		\frac{\left(\t-\a_Q^-\right)\left(\ol\t-\a_Q^+\right)}{\left(\t-\a_Q^+\right) \left(\ol\t-\a_Q^-\right)} = \frac{\frac{Q_\t}{\sqrt{D}}-i}{\frac{Q_\t}{\sqrt{D}}+i}.
	\end{align}
	We use \eqref{eq:logrule} and hence need to compute 
	\begin{multline}\label{logdiff}
	\Log\left(\frac{\t-\a_Q^-}{\t-\a_Q^+}\right) - \Log\left(\frac{\ol\t-\a_Q^-}{\ol\t-\a_Q^+}\right) - \Log\left(\frac{\left(\t-\a_Q^-\right)\left(\ol\t-\a_Q^+\right)}{\left(\t-\a_Q^+\right)\left(\ol\t-\a_Q^-\right)}\right)\\
	= i\left(\Arg\left(\frac{\t-\a_Q^-}{\t-\a_Q^+}\right) - \Arg\left(\frac{\ol\t-\a_Q^-}{\ol\t-\a_Q^+}\right) - \Arg\left(\frac{\left(\t-\a_Q^-\right)\left(\ol\t-\a_Q^+\right)}{\left(\t-\a_Q^+\right)\left(\ol\t-\a_Q^-\right)}\right) \right).
	\end{multline}
	Note that for $z\in\C\setminus\R$ 
	\begin{equation*}
		\Arg(z) - \Arg\left(\ol z\right) - \Arg\left(\frac{z}{\ol z}\right) = \pi\left(1-\sgn\left(\re(z)\right)\right)\sgn\left(\im(z)\right).
	\end{equation*}
	We use this for $z=\frac{\t-\a_Q^-}{\t-\a_Q^+}$. By \eqref{eq:loginput}, \eqref{logdiff} thus becomes 
$\pi i(\sgn(Q_\t)-\sgn(Q))$. By \eqref{eq:cayleytransform},
	\[
		\psi_{k+1,D}(\tau) - \rho_{k+1,D}(\tau,\ol{\tau}) = \sum_{Q \in \Qc_D} \frac{\Log\left(\frac{\frac{Q_\t}{\sqrt{D}}-i}{\frac{Q_\t}{\sqrt{D}}+i}\right)} {Q(\tau,1)^{k+1}} + \pi i\sum_{\Qc\in\Qc_D} \frac{\sgn\left(Q_{\tau}\right)-\sgn(Q)}{Q(\t,1)^{k+1}}.
	\]
	Combining with \eqref{eq:phidef} gives
	\begin{align} \label{eq:Lambdatemp}
	\psi_{k+1,D}(\tau) - \rho_{k+1,D}(\tau,\ol{\tau}) + 2\pi i\varphi_{k+1,D}(\tau)
= \!\sum_{Q \in \Qc_D}\! \frac{\Log\left(\frac{\frac{Q_\t}{\sqrt{D}}-i}{\frac{Q_\t}{\sqrt{D}}+i}\right)} {Q(\tau,1)^{k+1}} + \pi i\sum_{\Qc\in\Qc_D} \frac{\sgn\left(Q_{\tau}\right)}{Q(\t,1)^{k+1}},
	\end{align}
	which is modular of weight $2k+2$ by \eqref{eq:GammaOnQ} and Lemma \ref{lem:InvarFactors}. To finish the proof, we inspect $\lambda_{k+1,D}(\tau,\ol{\tau})$. Combining $Q_{\ol\tau} = -Q_{\tau}$ with Proposition \ref{prop:lambdaprop} (4) yields
	\begin{align*}
		\lambda_{k+1,D}(\tau,\ol\tau) = -\sum_{Q\in\Qc_D} \frac{\Log\left(-\frac{\frac{Q_\tau}{\sqrt{D}}-i}{\frac{Q_\tau}{\sqrt{D}}+i}\right)}{Q(\t,1)^{k+1}}.
	\end{align*}
	By \eqref{eq:argdiff}, we obtain
	\begin{align*}
		\Log\left(\frac{\frac{Q_\t}{\sqrt{D}}-i}{\frac{Q_\t}{\sqrt{D}}+i}\right) - \Log\left(-\frac{\frac{Q_\tau}{\sqrt{D}}-i}{\frac{Q_\tau}{\sqrt{D}}+i}\right) = -\pi i \sgn(Q_{\tau}),
	\end{align*}
	from which we conclude that
	\begin{align} \label{eq:VanishingOnDiagonal}
		\Log\left(\frac{\t-\a_Q^-}{\tau-\a_Q^+}\right) - \Log\left(\frac{\ol{\tau}-\a_Q^-}{\ol{\tau}-\a_Q^+}\right) + \pi i\sgn(Q) + 2i\arctan\left(\frac{Q_{\ol\tau}}{\sqrt{D}}\right) = 0,
	\end{align}
	as well as the claim using \eqref{eq:OmegaMdef}. \qedhere
\end{enumerate}
\end{proof}

The proof of Corollary \ref{cor:omegacor} is along the same lines as the proof of Theorem \ref{thm:OmegaMmain}.

\begin{proof}[Proof of Corollary \ref{cor:omegacor}]
	Let $z \in \H$. Rearranging \eqref{eq:VanishingOnDiagonal} and substituting $\tau \mapsto z$ yields
	\begin{align*}
		- \Log\left(\frac{\overline{z}-\a_Q^-}{\overline{z}-\a_Q^+}\right) + \pi i\sgn(Q) + 2i\arctan\left(\frac{Q_{\overline{z}}}{\sqrt{D}}\right) = - \Log\left(\frac{z-\a_Q^-}{z-\a_Q^+}\right),
	\end{align*}
	which proves the first assertion.

Combining Propositions \ref{prop:psiprop} (3) and \ref{prop:rhoprop} (3) shows that
	\begin{multline*}
		\tau^{-2k-2}\rho_{k+1,D}\left(-\frac{1}{\tau},-\frac{1}{\mathfrak{z}}\right) - \rho_{k+1,D}(\tau,\mathfrak{z}) \\
		= \sum_{Q\in\Qc_D} \frac{\log\vt{\frac{\a_Q^+}{\a_Q^-}}}{Q(\t,1)^{k+1}} - 2\pi i \sgn\left(\im(\mathfrak{z})\right)\sum_{\substack{Q=[a,b,c]\in\Qc_D\\a<0<c}} \frac{1}{Q(\t,1)^{k+1}}, \qquad \mathfrak{z} \in \C \setminus \R.
	\end{multline*}
	In other words, $\psi_{k+1,D}$ and $\rho_{k+1,D}$ transform exactly the same on $\H \times \H$, from which weight $(2k+2,0)$ bimodularity of $\omega_{k+1,D}$ on $\H \times \H$ immediately follows. The other assertions are immediate consequences of the definitions.
\end{proof}

\section{The function \texorpdfstring{$\Lambda_{k+1,D}$}{\textLambda(k+1)D}} \label{sec:Lambdasec}

\subsection{Local cusp forms}
Recall the definition of $\La_{k+1,D}$ in \eqref{eq:Lambdadef}.

\begin{rmk}
	Let $d(z,w)$ denote the hyperbolic distance between $z,w\in\C$ with $\im(z)\im(w)>0$. Since $D>0$, we have (with $\t_{[a,b,c]}\coloneqq-\frac{b}{2a}+\frac{i}{2\vt{a}}\sqrt{D}$) $\frac{Q_\t}{\sqrt{D}}=\cosh(d(\t,\t_Q))$. This yields an alternative representation of $\La_{k+1,D}$ as well as of $\l_{k+1,D}$.
\end{rmk}

We next prove our claim for $\La_{k+1,D}$ from the introduction.
\begin{prop} \label{prop:Lambdacusp}
The functions $\Lambda_{k+1,D}$ are local cusp forms.
\end{prop}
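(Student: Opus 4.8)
I would establish the four defining properties of a local cusp form (modularity of weight $2k+2$ for $\SL_2(\Z)$, local harmonicity/holomorphy off $E_D$, the averaging condition on $E_D$, and cuspidality) one at a time, exploiting the fact that $\La_{k+1,D}$ differs from the modular, holomorphic function $\psi_{k+1,D}-\rho_{k+1,D}(\cdot,\ol\cdot)+\varphi_{k+1,D}(\cdot,\ol\cdot)$ only by the log-term collected in \eqref{eq:Lambdatemp}. More precisely, from the identity \eqref{eq:Lambdatemp} proved during Theorem \ref{thm:OmegaMmain}(5), one reads off
\[
	\pi i\,\La_{k+1,D}(\t) = \psi_{k+1,D}(\t) - \rho_{k+1,D}(\t,\ol\t) + \varphi_{k+1,D}(\t,\ol\t) - \sum_{Q\in\Qc_D} \frac{\Log\left(\frac{\frac{Q_\t}{\sqrt{D}}-i}{\frac{Q_\t}{\sqrt{D}}+i}\right)}{Q(\t,1)^{k+1}},
\]
so that $\La_{k+1,D}$ inherits modularity in weight $2k+2$ for $\SL_2(\Z)$ directly from \eqref{eq:GammaOnQ} and Lemma \ref{lem:InvarFactors} applied term-by-term (using $(Q\circ\g)_\t=Q_{\g\t}$); convergence is covered by Proposition \ref{prop:psiprop}(1) together with the estimate $|\sgn(Q_\t)|\le1$ and absolute convergence of $f_{\k,D}$.

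\textbf{Harmonicity and cuspidality.} Away from $E_D$ the quantity $\sgn(Q_\t)$ is locally constant in $\t$ for every $Q\in\Qc_D$, so on a neighborhood of any $\t_0\in\H\setminus E_D$ the function $\La_{k+1,D}$ is a finite-looking (really absolutely convergent) sum of holomorphic terms $\pm Q(\t,1)^{-k-1}$; hence it is real-analytic there and annihilated by $\De_{2k+2}$, since each $Q(\t,1)^{-k-1}$ is holomorphic of the right weight. For the growth condition, I would use that $i\infty$ lies in the unbounded component of $\H\setminus E_D$, on which $\sgn(Q_\t)=\sgn(Q)$ for all $Q$ (as noted in Section~\ref{sec:prelim}, $\t$ is in the bounded component of $\H\setminus S_Q$ iff $\sgn(Q)\sgn(Q_\t)<0$, and the unbounded component of $\H\setminus E_D$ contains $i\infty$); therefore near $i\infty$ we have $\La_{k+1,D}(\t)=2\varphi_{k+1,D}(\t)$, which tends to $0$ by Lemma \ref{lem:phitransf}. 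In particular $\La_{k+1,D}$ is bounded (even cuspidal) at $i\infty$, and modularity spreads this to all cusps of $\SL_2(\Z)$ — there is only one.

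\textbf{The averaging condition.} This is the step I expect to carry the real content. Fix $\t_0\in E_D$ and let $Q_1,\dots,Q_r$ be the forms in $\Qc_D$ with $\t_0\in S_{Q_j}$ (finitely many, since $S_Q$ through a fixed point forces $a|w|^2+b\re(w)+c=0$, and $D=b^2-4ac$ bounds the solutions). For $Q\notin\{Q_1,\dots,Q_r\}$, $\sgn(Q_\t)$ is constant on a neighborhood of $\t_0$, so those terms are continuous and satisfy the averaging identity trivially. For each $Q_j$, as $\eps\to0^+$ the quantity $Q_{j,\t_0+i\eps}$ and $Q_{j,\t_0-i\eps}$ approach $0$ from opposite sides (since $\del_v Q_{j,\t}\neq0$ on $S_{Q_j}$: one has $Q'(\t,1)^2-2Q''(\t,1)Q(\t,1)=D>0$ by Lemma~\ref{lem:Qtricks}, and combined with $Q_\t v+ivQ'(\t,1)=Q(\t,1)$ this shows the geodesic is crossed transversally), hence $\sgn(Q_{j,\t_0+i\eps})=-\sgn(Q_{j,\t_0-i\eps})$ and
\[
	\tfrac12\left(\sgn(Q_{j,\t_0+i\eps})+\sgn(Q_{j,\t_0-i\eps})\right)=0=\sgn(Q_{j,\t_0}),
\]
the last equality because $Q_{j,\t_0}=0$ and we use the convention $\sgn(0)=0$. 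Summing the two (continuous) and (jumping) parts gives property (3) of Definition \ref{defn:LHMF}; the difference of the one-sided limits equals $\sum_{j=1}^r 2\sgn(Q_{j,\t_0+i\eps})Q_j(\t_0,1)^{-k-1}\neq0$, which is the promised jumping singularity. This also makes precise the "local cusp form" terminology: $\La_{k+1,D}$ is harmonic and cuspidal off $E_D$ but has genuine jumps across the geodesics in $E_D$. I do not expect any obstacle beyond bookkeeping — the only mildly delicate point is the transversality of the crossing, which Lemma \ref{lem:Qtricks} supplies.
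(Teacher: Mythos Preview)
Your proposal is correct and follows essentially the same approach as the paper: convergence by comparison with $f_{k+1,D}$, modularity term-by-term via \eqref{eq:GammaOnQ} and Lemma~\ref{lem:InvarFactors}, local holomorphy from the local constancy of $\sgn(Q_\tau)$ off $E_D$, and cuspidality from the behaviour for large $v$ (the paper invokes $f_{k+1,D}$ directly rather than going through $2\varphi_{k+1,D}$, but this is the same in spirit). Your treatment is more detailed---the averaging and jump computation you give is what the paper postpones to the separate Proposition~\ref{prop:Lambdalocal}---and the opening identity from \eqref{eq:Lambdatemp} is unnecessary scaffolding since you then argue directly anyway, but nothing is incorrect.
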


\begin{proof}
	We observe that the $\La_{k+1,D}$ converge absolutely on $\H$ utilizing absolute convergence of the $f_{\k,D}$. We directly deduce that the $\La_{k+1,D}$ are holomorphic. Using Lemma \ref{lem:InvarFactors} and \eqref{eq:GammaOnQ} shows that the $\La_{k+1,D}$ are modular of weight $2k+2$. If $v>\frac{\sqrt{D}}{2}$, then $\sgn(Q_\t)=\sgn(Q)$ by \eqref{eq:Qwasymp}. Thus, cuspidality of the $\La_{k+1,D}$ follows by cuspidality of the $\varphi_{k+1,D}$ (see Lemma \ref{lem:phitransf}). The local behaviour and the jumping singularities are dictated by $\sgn(Q_\t)$.
\end{proof}

\subsection{The local behaviour of \texorpdfstring{$\Lambda_{k+1,D}$}{\textLambda(k+1)D}}

We next provide the behaviour of $\La_{k+1,D}$ on $E_D$.

\begin{prop} \label{prop:Lambdalocal}
	If $\t\in E_D$, then we have that
	\[
		\lim_{\eps\to0^+} (\La_{k+1,D}(\t+i\eps)-\La_{k+1,D}(\t-i\eps)) = 2\sum_{\substack{Q\in\Qc_D\\Q_\t=0}} \frac{\sgn(Q)}{Q(\t,1)^{k+1}}.
	\]
\end{prop}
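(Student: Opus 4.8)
The plan is to analyze the jump of $\La_{k+1,D}(\t+i\eps)-\La_{k+1,D}(\t-i\eps)$ term by term in the absolutely convergent sum defining $\La_{k+1,D}$, and to observe that the only terms whose contribution survives as $\eps\to0^+$ are those $Q\in\Qc_D$ for which $\t$ actually lies on the geodesic $S_Q$, i.e. $Q_\t=0$. First I would fix $\t\in E_D$ and split the sum $\La_{k+1,D}(\t) = \sum_{Q\in\Qc_D} \sgn(Q_\t)/Q(\t,1)^{k+1}$ according to whether $Q_\t=0$ or $Q_\t\ne0$. For every $Q$ with $Q_\t\ne0$, continuity of $w\mapsto Q_w$ implies $\sgn(Q_{\t+i\eps})=\sgn(Q_{\t-i\eps})=\sgn(Q_\t)$ for all sufficiently small $\eps>0$; moreover, since $\t\in E_D$ and $D$ is not a square, $Q(\t,1)\ne0$ for every $Q\in\Qc_D$ (the zeros of $Q(\cdot,1)$ are the real points $\a_Q^\pm$, not in $\H$), so $w\mapsto Q(w,1)^{-k-1}$ is holomorphic near $\t$ and these terms contribute $0$ to the limit of the difference. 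This uses the local-uniformity of convergence, which follows from Proposition \ref{prop:psiprop} (1) (more precisely from absolute convergence of the $f_{\k,D}$'s) and allows interchanging the limit with the sum.

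Next I would handle the finitely many terms with $Q_\t=0$. For such $Q$, the function $Q(\cdot,1)^{-k-1}$ is still holomorphic near $\t$, hence continuous, so
\[
	\lim_{\eps\to0^+}\left(\frac{\sgn(Q_{\t+i\eps})}{Q(\t+i\eps,1)^{k+1}} - \frac{\sgn(Q_{\t-i\eps})}{Q(\t-i\eps,1)^{k+1}}\right) = \frac{1}{Q(\t,1)^{k+1}}\lim_{\eps\to0^+}\left(\sgn(Q_{\t+i\eps})-\sgn(Q_{\t-i\eps})\right).
\]
It remains to evaluate $\lim_{\eps\to0^+}(\sgn(Q_{\t+i\eps})-\sgn(Q_{\t-i\eps}))$ when $\t\in S_Q$. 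Here I would use the discussion in Section \ref{sec:prelim}: $S_Q$ separates $\H$ locally into two components, and the sign $\sgn(Q_w)$ is $+1$ on one side and $-1$ on the other. The function $w\mapsto Q_w$ is real-analytic near $\t$ with $Q_\t=0$, so I need to check that $Q_w$ changes sign transversally as $w$ crosses $S_Q$ vertically, i.e. that $\tfrac{\del}{\del\eps}Q_{\t+i\eps}\big|_{\eps=0}\ne0$ and has a definite sign; this can be read off from Lemma \ref{lem:holomzoo} or by direct computation of $\frac{\del}{\del v}Q_\t = \frac{\del}{\del v}\big(\frac{a(u^2+v^2)+bu+c}{v}\big)$, which at $Q_\t=0$ equals $\frac{a(u^2+v^2)+bu+c}{v}\big|'= a - \frac{a|w|^2+bu+c}{v^2} = a + \frac{D}{4a v^2}\cdot(\text{something})$; in any case one finds the derivative equals $2a + (\text{positive multiple})$ — more cleanly, $Q_{\t+i\eps} = Q_\t + \eps\cdot\partial_v Q_\t + O(\eps^2)$ and $\partial_v Q_\t = 2a$ when $Q_\t = 0$ follows from $Q_\t v = a|w|^2+bu+c$ differentiated in $v$. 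Hence $\sgn(Q_{\t+i\eps}) = \sgn(2a\eps) = \sgn(a)=\sgn(Q)$ and $\sgn(Q_{\t-i\eps}) = -\sgn(Q)$ for small $\eps>0$, so the limit of the difference is $2\sgn(Q)$.

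Assembling the pieces gives
\[
	\lim_{\eps\to0^+}(\La_{k+1,D}(\t+i\eps)-\La_{k+1,D}(\t-i\eps)) = \sum_{\substack{Q\in\Qc_D\\Q_\t=0}} \frac{2\sgn(Q)}{Q(\t,1)^{k+1}},
\]
which is the claim. The main obstacle I expect is the justification of interchanging the limit $\eps\to0^+$ with the infinite sum over $Q\in\Qc_D$: one must argue that the tail of the series, as a function of $w$ in a small disk around $\t$, is bounded by a convergent $Q$-independent majorant (namely a tail of $\sum_Q |Q(w,1)|^{-k-1}$), which is legitimate because on a small compact disk around $\t\in\H$ the quantities $|Q(w,1)|$ are comparable to $|Q(\t,1)|$ uniformly in $Q$; together with the trivial bound $|\sgn(Q_w)|\le 1$, dominated convergence then applies. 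The sign-transversality computation for the finitely many terms with $Q_\t=0$ is elementary but should be spelled out carefully, since it is precisely the place where the counterclockwise/clockwise orientation convention for $S_Q$ and the identity $\sgn(Q)\sgn(Q_\t)<0$ on the bounded component enter.
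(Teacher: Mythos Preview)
Your proposal is correct and follows essentially the same approach as the paper: split the sum into $Q_\t\ne0$ and $Q_\t=0$, use absolute/local-uniform convergence of the $f_{\k,D}$-type series to interchange limit and sum, and observe that only the finitely many $Q$ with $Q_\t=0$ contribute. The one cosmetic difference is that the paper imports the sign identities $\sgn(Q_{\t\pm i\eps})=\pm\sgn(Q)$ from \cite{BKK}*{(5.4), (5.6)}, whereas you compute the transversality directly via $\partial_v Q_\t\big|_{Q_\t=0}=2a$; your computation is correct (from $Q_\t v = a\vt{\t}^2+bu+c$ one gets $v\,\partial_v Q_\t + Q_\t = 2av$, hence $\partial_v Q_\t = 2a$ when $Q_\t=0$), and this direct argument is arguably cleaner than the citation.
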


\begin{rmk}
	The sum on the right-hand side is finite by \cite{BKK}*{Lemma 5.1 (1)}.
\end{rmk}

\begin{proof}[Proof of Proposition \ref{prop:Lambdalocal}]
	We adapt the proof of \cite{BKK}*{Proposition $5.2$}. We write
	\[
		\La_{k+1,D}(\t\pm i\eps) = \left(\sum_{\substack{Q\in\Qc_D\\Q_\t=0}} + \sum_{\substack{Q\in\Qc_D\\Q_\t\ne0}}\right) \frac{\sgn(Q_{\t\pm i\eps})}{Q(\t\pm i\eps,1)^{k+1}}.
	\]
	The properties of $f_{k+1,D}$ imply that $\Lambda_{k+1,D}$ converges absolutely on $\H$ and uniformly towards $i\infty$, which permits us to interchange the sums with the limit, and argue termwise.

	If $Q_\t\ne0$, then $\t\pm i\eps$ are in the same connected component of $\H\setminus E_D$ for $\eps>0$ sufficiently small. Combining with \cite{BKK}*{(5.4)}, we deduce that for $\eps>0$ sufficiently small
	\[
		\sgn([a,b,c]_{\t+i\eps}) = \sgn([a,b,c]_{\t-i\eps}) = \de\sgn(a),
	\]
	where
	\[
		\delta \coloneqq \sgn\left(\vt{\t+i\eps+\frac{b}{2a}}-\frac{\sqrt{D}}{2\vt{a}}\right) = \sgn\left(\vt{\t-i\eps+\frac{b}{2a}}-\frac{\sqrt{D}}{2\vt{a}}\right) = \pm1.
	\]
	Thus
\begin{align*}
		\lim_{\eps\to0^+} \!\left(\frac{\sgn(Q_{\t+i\eps})}{Q(\t+i\eps,1)^{k+1}} - \frac{\sgn(Q_{\t-i\eps})}{Q(\t-i\eps,1)^{k+1}}\right)
= \delta \lim_{\eps\to0^+} \!\left(\frac{\sgn(Q)}{Q(\t+i\eps,1)^{k+1}}-\frac{\sgn(Q)}{Q(\t-i\eps,1)^{k+1}}\right) = 0.
\end{align*}
	If $Q_\t=0$, then $\t\pm i\eps$ are in different connected components of $\H\setminus E_D$ for all $\eps>0$. This is justified by \cite{BKK}*{(5.6)}, namely, for every $\eps>0$,
	\[
		\vt{\t-i\eps+\frac{b}{2a}} - \frac{\sqrt{D}}{2\vt{a}} < \vt{\t+\frac{b}{2a}} - \frac{\sqrt{D}}{2\vt{a}} = 0 < \vt{\t+i\eps+\frac{b}{2a}} - \frac{\sqrt{D}}{2\vt{a}}.
	\]
	Combining with \cite{BKK}*{(5.4)} implies that $\sgn(Q_{\t\pm i\eps})=\pm\sgn(Q)$, and consequently
	\[
		\lim_{\eps\to0^+} \left(\frac{\sgn(Q_{\t+i\eps})}{Q(\t+i\eps,1)^{k+1}} - \frac{\sgn(Q_{\t-i\eps})}{Q(\t-i\eps,1)^{k+1}}\right) = 2\frac{\sgn(Q)}{Q(\t,1)^{k+1}}. \qedhere
	\]
\end{proof}

We next inspect the sum appearing in Proposition \ref{prop:Lambdalocal}. 

\begin{lemma} \label{lem:localnonzero}
The sum
\begin{align*}
\sum_{\substack{Q \in \Qc_D \\ Q_{\tau} = 0}} \frac{\sgn(Q)}{Q\left(\tau,1\right)^{k+1}}
\end{align*}
does not vanish identically on $E_D$.
\end{lemma}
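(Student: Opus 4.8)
The goal is to show that the finite sum
\[
	G_{k+1,D}(\t) \coloneqq \sum_{\substack{Q \in \Qc_D \\ Q_{\tau} = 0}} \frac{\sgn(Q)}{Q(\tau,1)^{k+1}}
\]
is not the zero function on $E_D$. My approach is to evaluate the jump of $\La_{k+1,D}$ very close to $i\infty$, where the geodesic structure of $E_D$ is the simplest, and to exhibit a single point $\t$ on one geodesic $S_Q$ where the sum is forced to be nonzero because essentially only one term contributes.

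First I would recall from Section~\ref{sec:prelim} (and the proof of Proposition~\ref{prop:Lambdacusp}) that if $v > \tfrac{\sqrt{D}}{2}$ then $\sgn(Q_\t) = 1$ for \emph{all} $Q \in \Qc_D$, so in that region $E_D$ is empty. Hence the ``top'' geodesics of $E_D$ lie just below this height. The plan is to locate the highest point(s) of $E_D$: the Heegner geodesic $S_Q$ for $Q = [a,b,c]$ is a semicircle of Euclidean radius $\tfrac{\sqrt{D}}{2|a|}$ centered at $-\tfrac{b}{2a}$, so its apex has height $\tfrac{\sqrt{D}}{2|a|}$, which is maximized when $|a| = 1$. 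Thus I would restrict attention to the forms $Q = [\pm 1, b, c]$ with $b^2 - 4(\pm1)c = D$, i.e. $c = \tfrac{b^2 - D}{4}$ (when $a=1$) — there are finitely many residues of $b$ modulo small numbers making this integral, but infinitely many such $b$, and the corresponding apexes $-\tfrac{b}{2}+\tfrac{i}{2}\sqrt{D}$ are spread out along the horizontal line $\im \t = \tfrac12\sqrt{D}$ with real parts $-\tfrac{b}{2}$, hence separated by at least $\tfrac12$ from one another in the real direction once $b$ ranges over a fixed residue class.

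Next, fix one such form $Q_0 = [1,b_0,c_0]$ and consider a point $\t_0 \in S_{Q_0}$ near its apex, so $\im(\t_0)$ is close to $\tfrac12\sqrt{D}$. I claim that for $\t_0$ chosen sufficiently close to the apex, $Q_0$ is the \emph{only} form in $\Qc_D$ with $(Q_0)_{\t_0}$-analogue vanishing at $\t_0$, i.e. the only $Q$ with $\t_0 \in S_Q$. Indeed, any other $Q = [a,b,c] \in \Qc_D$ passing through $\t_0$ has $\im(\t_0) \le \tfrac{\sqrt{D}}{2|a|}$, forcing $|a| = 1$ (since $\im(\t_0)$ is close to $\tfrac12\sqrt{D}$), and then its semicircle has apex height exactly $\tfrac12\sqrt{D}$ and center $-\tfrac{b}{2a}$; for this semicircle to contain a point of height close to $\tfrac12\sqrt{D}$ its center must be within a small horizontal distance of $\re(\t_0) = -\tfrac{b_0}{2}$, and by the separation of apexes noted above, only $Q_0$ and possibly $-Q_0$ qualify. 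Since $\sgn(-Q_0) = -\sgn(Q_0)$ but $(-Q_0)(\t,1)^{k+1} = (-1)^{k+1}Q_0(\t,1)^{k+1} = Q_0(\t,1)^{k+1}$ (as $k+1$ has the same parity issue — actually $k \in 2\N$... wait, $k$ is even, so $k+1$ is odd, hence $(-1)^{k+1} = -1$), the terms for $Q_0$ and $-Q_0$ are \emph{equal}, not cancelling. Therefore $G_{k+1,D}(\t_0) = \tfrac{2\sgn(Q_0)}{Q_0(\t_0,1)^{k+1}} \ne 0$, since $Q_0(\t_0,1) \ne 0$ for $\t_0 \in \H$.

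The main obstacle I anticipate is making the separation-of-apexes argument fully rigorous: one must confirm that among the forms $[\pm1, b, c]$ of discriminant $D$, the admissible $b$'s, while infinite in number, never accumulate — they lie in fixed residue classes mod $4$ (or mod $2$, depending on $D \bmod 4$), so consecutive admissible values of $b$ differ by at least $2$, giving horizontal separation $\ge 1$ between distinct apexes, which comfortably exceeds the horizontal extent of the tiny cap of $S_{Q_0}$ we work in. One should also double-check that $G_{k+1,D}$ is being evaluated at a point genuinely in $E_D$ (it is, by construction $\t_0 \in S_{Q_0} \subseteq E_D$) and that the "only $Q_0$ and $-Q_0$ contribute" claim handles the $a$ and $-a$ bookkeeping correctly; alternatively one can simply note $\sgn(Q)Q(\t,1)^{-k-1}$ is invariant under $Q \mapsto -Q$ (since $k+1$ is odd and $\sgn$ flips), so pairs $\{Q,-Q\}$ contribute the same and the sum over $\{Q \in \Qc_D : Q_{\t_0} = 0\}$ reduces to $2$ times the sum over the single class $\{Q_0\}$ (choosing $a>0$), which is manifestly nonzero. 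This furnishes an explicit $\t \in E_D$ with $G_{k+1,D}(\t) \neq 0$, completing the proof.
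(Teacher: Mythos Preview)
Your argument is correct and takes a genuinely different route from the paper. The paper fixes a geodesic $S_{\Qf}$ and lets $\t$ run along $S_{\Qf}$ toward one of the real endpoints $\a_{\Qf}^\pm$; since no other $Q\in\Qc_D$ with $Q\neq\pm\Qf$ shares that endpoint (two forms of the same non-square discriminant sharing a root must be equal up to sign), the term $\sgn(\Qf)Q_{\Qf}(\t,1)^{-k-1}$ blows up while the remaining contributions stay bounded, so the sum cannot vanish identically on $S_{\Qf}$. Your approach instead isolates a single explicit point: the apex $\t_0=-\tfrac{b_0}{2}+\tfrac{i}{2}\sqrt{D}$ of a geodesic $S_{Q_0}$ with $Q_0=[1,b_0,c_0]$, where height considerations force $|a|=1$ for any $Q$ through $\t_0$ and the center-separation (admissible $b$'s lie in a fixed residue class mod $2$) rules out all $Q\neq\pm Q_0$. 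The parity check that the $Q_0$ and $-Q_0$ terms \emph{add} rather than cancel (because $k+1$ is odd) is exactly right.

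The trade-off: the paper's limiting argument is shorter and works on any geodesic without analyzing the lattice of apexes, but is less constructive. Your argument produces a concrete $\t\in E_D$ with $G_{k+1,D}(\t)\neq0$, at the cost of the small geometric bookkeeping you flagged. In fact you can streamline your version by taking $\t_0$ to be the apex itself rather than ``near'' it: any other $|a|=1$ geodesic has center at horizontal distance $\ge1$ from $-b_0/2$, hence distance $>\tfrac{\sqrt{D}}{2}$ from $\t_0$, so the apex already lies on no other $S_Q$.
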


\begin{proof}
	Let $\t\in E_D$. Then we have $\t\in S_\Qf$ for some $\Qf\in \Qc_D$. On the one hand, the sum in the lemma has a pole of order $k+1>0$ at $\a_\Qf^\pm$, and hence both limits
	\begin{equation*}
	\lim_{\substack{\tau \to \a_\Qf^\pm \\ \tau \in S_{\Qf}}} \vt{ \sum_{\substack{Q \in \Qc_D \\ Q_{\tau} = 0}} \frac{\sgn(Q)}{Q\left(\tau,1\right)^{k+1}} }
	\end{equation*}
	tend towards $\infty$. On the other hand, the sum is continuous on $S_\Qf$, and the contribution from the terms corresponding to $Q \neq \Qf$ is finite at $\a_\Qf^\pm$.
\end{proof}

\subsection{The local behaviour of \texorpdfstring{$\Ec_{\La_{k+1,D}}$}{E\textLambda(k+1)D} and \texorpdfstring{$\Lambda_{k+1,D}^*$}{\textLambda(k+1)D*}}

We next prove that the Eichler integrals of $\La_{k+1,D}$ exist on $E_D$.

\begin{prop} \label{prop:Eichlerlocal}
	Let $\t\in E_D$. Then we have
	\begin{align*}
		\lim_{\eps\to0^+} \left(\Ec_{\La_{k+1,D}}(\t+i\eps)-\Ec_{\La_{k+1,D}}(\t-i\eps)\right) &= -\frac{2(2\pi i)^{2k+1}}{(2k)!}\int_0^{i\infty} \sum_{\substack{Q\in\Qc_D\\Q_{\t+w}=0}} \frac{\sgn(Q)}{Q(\t+w,1)^{k+1}}w^{2k} dw,\\
		\lim_{\eps\to0^+} \left(\La_{k+1,D}^*(\t+i\eps)-\La_{k+1,D}^*(\t-i\eps)\right) &= -\frac{2}{(2i)^{2k+1}} \int_{2iv}^{i\infty} \sum_{\substack{Q\in\Qc_D\\Q_{\t-w}=0}} \frac{\sgn(Q)}{Q(\t-w,1)^{k+1}}w^{2k} dw.
	\end{align*}
\end{prop}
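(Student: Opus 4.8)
The plan is to reduce both limits to Proposition~\ref{prop:Lambdalocal} by first moving, inside each Eichler integral, the $\eps$-dependence onto the argument of $\La_{k+1,D}$, and then interchanging the limit with the integral.

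For $\Ec_{\La_{k+1,D}}$ I would parametrise the path in \eqref{eq:Eichlerdef} by the vertical ray and substitute $w\mapsto\t+w$, so that $w$ runs over the positive imaginary axis and $(\t-w)^{2k}\mapsto w^{2k}$; this gives
\[
	\Ec_{\La_{k+1,D}}(\t) = -\frac{(2\pi i)^{2k+1}}{(2k)!}\int_0^{i\infty}\La_{k+1,D}(\t+w)\,w^{2k}\,dw.
\]
Subtracting the values at $\t+i\eps$ and at $\t-i\eps$ then produces the integral over $(0,i\infty)$ of $(\La_{k+1,D}(\t+i\eps+w)-\La_{k+1,D}(\t-i\eps+w))\,w^{2k}$, with no boundary contribution, since after the substitution neither the endpoints nor the factor $w^{2k}$ depend on the base point. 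Passing $\eps\to0^+$ under the integral sign and applying Proposition~\ref{prop:Lambdalocal}: for a fixed $w$ the integrand tends to $0$ when $\t+w\notin E_D$, where $\La_{k+1,D}$ is continuous, and to $2\sum_{\substack{Q\in\Qc_D\\Q_{\t+w}=0}}\frac{\sgn(Q)}{Q(\t+w,1)^{k+1}}$ when $\t+w\in E_D$. This is precisely the first asserted identity.

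For $\La_{k+1,D}^*$ I would argue in the same way, being careful with the conjugations. Parametrising the path in \eqref{eq:Eichlerdef} through $-\ol\t$ and substituting so that the new variable again runs over a vertical ray, the lower endpoint becomes $2iv$ because $-\ol\t+\t=2iv$, and the elementary identities $Q_{\ol z}=-Q_z$, $Q(\ol z,1)=\ol{Q(z,1)}$ rewrite the integrand in terms of $\La_{k+1,D}$ itself, the conjugation also accounting for the overall sign. The mismatch $2i(v\pm\eps)$ of the lower endpoints of the two shifted integrals contributes only a boundary term vanishing as $\eps\to0^+$; after that the same limiting argument, with the jump of $\La_{k+1,D}$ at $\t-w$ determined exactly as in Proposition~\ref{prop:Lambdalocal}, yields the second identity.

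The step I expect to be the main obstacle is the justification of interchanging limit and integral, which requires an $\eps$-uniform integrable majorant on the ray. On the compact initial part of the ray $\La_{k+1,D}$ is merely bounded, having jump discontinuities there, so one must check that the shifted points $\t\pm i\eps+w$ (and their analogues for $\La^*$) stay in a fixed compact subset of $\H$, which holds because $v>0$; on the unbounded part of the ray one must invoke the exponential decay of the local cusp form $\La_{k+1,D}$ towards $i\infty$, inherited via Proposition~\ref{prop:Lambdacusp} from that of the $f_{\k,D}$'s, to dominate the polynomial factor $w^{2k}$. This decay is also what guarantees the absolute convergence of the defining integrals of $\Ec_{\La_{k+1,D}}(\t\pm i\eps)$ and $\La_{k+1,D}^*(\t\pm i\eps)$ in the first place; apart from this, only the routine bookkeeping of conjugations and path orientation in the $\La^*$ computation remains.
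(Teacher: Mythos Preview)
Your proposal is correct and follows essentially the same approach as the paper's own proof: the same change of variables placing the $\eps$-dependence on the argument of $\La_{k+1,D}$, the same reduction to Proposition~\ref{prop:Lambdalocal}, the same use of $Q_{\ol z}=-Q_z$ for $\La_{k+1,D}^*$, and the same handling of the mismatched lower endpoints $2i(v\pm\eps)$ via a vanishing boundary piece. Your justification for interchanging limit and integral (bounded on a compact initial segment, exponentially decaying on the tail from Proposition~\ref{prop:Lambdacusp}) is in fact more explicit than the paper's, which invokes uniform convergence of $\La_{k+1,D}$ towards the cusps more tersely.
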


\begin{rmk}
If $\t+ w\notin E_D$ resp.\ $\t - w\notin E_D$, then the sums inside the integrands on the right-hand sides of Propostion \ref{prop:Eichlerlocal} are empty. If $\t + w\in E_D$ resp.\ $\t - w \in E_D$, then the sums inside the integrands are finite as remarked after Proposition \ref{prop:Lambdalocal}. Thus, each integral runs over a bounded domain, because the integrands vanish as soon as\footnote{If $\im(\t+w)>\frac{\sqrt{D}}{2}$, then $\t+w$ lies in the unbounded component of $\H\setminus E_D$.} $\im(\t+w)>\frac{\sqrt{D}}{2}$ or $\t-w$ moves out of $\H$.  Hence, the integrals on the right-hand side of Propostion \ref{prop:Eichlerlocal} exist.
\end{rmk}

\begin{proof}[Proof of Proposition \ref{prop:Eichlerlocal}]
	As $\t\pm i\eps\notin E_D$ for $\eps>0$, we utilize \eqref{eq:Lambdadef}. Changing variables gives
	\begin{align*}
		&\lim_{\eps\to0^+} \left(\Ec_{\La_{k+1,D}}(\t+i\eps)-\Ec_{\La_{k+1,D}}(\t-i\eps)\right)\\
		&\quad\quad= -\frac{(2\pi i)^{2k+1}}{(2k)!}\lim_{\eps\to0^+} \int_0^{i\infty} \sum_{Q\in\Qc_D} \left(\frac{\sgn(Q_{\t+i\eps+w})}{Q(\t+i\eps+w,1)^{k+1}} - \frac{\sgn(Q_{\t-i\eps+w})}{Q(\t-i\eps+w,1)^{k+1}}\right)w^{2k} dw, \\
		&\lim_{\eps\to0^+} \left(\La_{k+1,D}^*(\t+i\eps)-\La_{k+1,D}^*(\t-i\eps)\right) = \frac{1}{(2i)^{2k+1}} \\ 
		& \quad\times \lim_{\eps\to0^+} \left(
\int_{2i(v-\eps)}^{i\infty} \sum_{Q\in\Qc_D} \frac{\sgn\left(Q_{\t-w-i\eps}\right) w^{2k}}{Q(\t-w-i\eps,1)^{k+1}} dw
-\int_{2i(v+\eps)}^{i\infty} \sum_{Q\in\Qc_D} \frac{\sgn\left(Q_{\t-w+i\eps}\right) w^{2k}}{Q(\t-w+i\eps,1)^{k+1}} dw 
\right),
	\end{align*}
	where we use $Q_{\ol z}=-Q_z$ for $\La_{k+1,D}^*$. We next justify interchanging the limit $\eps\to0^+$ with the holomorphic Eichler integral. By \eqref{eq:Lambdadef}, $\La_{k+1,D}$ vanishes at $i\infty$, and converges uniformly towards $i\infty$ as the sign-function is bounded (as $f_{\k,D}$ converges uniformly towards $i\infty$ for $\k>1$). By modularity of $\La_{k+1,D}$, both assertions hold towards $0$ as well. So the integral converges uniformly, and this permits the exchange of the limit $\eps\to0^+$ with the integral. 

	We consider the holomorphic Eichler integral first. If $\t+w\notin E_D$, then the limit inside the integral vanishes, because $\t+w+i\eps$ and $\t+w-i\eps$ are in the same connected component for $\eps$ sufficiently small. If $\t+w\in E_D$, then we apply Proposition \ref{prop:Lambdalocal} to obtain
	\begin{align*}
		\lim_{\eps\to0^+} &\left(\mathcal{E}_{\Lambda_{k+1,D}}(\t+i\eps)-\mathcal{E}_{\Lambda_{k+1,D}}(\t-i\eps)\right) \\
		&\quad\quad= -\frac{(2\pi i)^{2k+1}}{(2k)!} \int_0^{i\infty} \lim_{\eps\to0^+} \left(\Lambda_{k+1,D}(\tau+w+i\eps) - \Lambda_{k+1,D}(\tau+w-i\eps)\right) w^{2k} dw \\
		&\quad\quad= -\frac{2(2\pi i)^{2k+1}}{(2k)!} \int_0^{i\infty} \sum_{\substack{Q\in\Qc_D\\Q_{\t+w}=0}} \frac{\sgn(Q)}{Q(\t+w,1)^{k+1}}w^{2k} dw.
	\end{align*}

	Now, we treat the non-holomorphic Eichler integrals, and first split one of them as
\begin{multline*}
		\int_{2i(v-\eps)}^{i\infty} \sum_{Q \in \Qc_D} \frac{\sgn\left(Q_{\tau-w-i\eps}\right)}{Q(\t-w-i\eps,1)^{k+1}}w^{2k} dw \\
= \left(\int_{2i(v-\eps)}^{2i(v+\eps)} + \int_{2i(v+\eps)}^{i\infty}\right) \sum_{Q \in \Qc_D} \frac{\sgn\left(Q_{\tau-w-i\eps}\right)}{Q(\t-w-i\eps,1)^{k+1}}w^{2k} dw.
\end{multline*}
	We note that
	\begin{align*}
		\lim_{\eps\to0^+} \int_{2i(v-\eps)}^{2i(v+\eps)} \frac{\sgn\left(Q_{\tau-w-i\eps}\right)}{Q(\t-w-i\eps,1)^{k+1}}w^{2k} dw = 0,
	\end{align*}
	because the integrand is bounded in the domain of integration, which has measure $0$ as $\varepsilon \to 0^+$. Hence, it remains to consider the integral from $2i(v+\eps)$ to $i\infty$. If $\t-w\notin E_D$, then we have
	\begin{align*}
		\lim_{\eps\to0^+} \int_{2i(v+\eps)}^{i\infty} \left(-\frac{\sgn\left(Q_{\tau-w+i\eps}\right)}{Q(\t-w+i\eps,1)^{k+1}} + \frac{\sgn\left(Q_{\tau-w-i\eps}\right)}{Q(\t-w-i\eps,1)^{k+1}}\right)w^{2k} dw = 0,
	\end{align*}
	as in the previous case, because $\t-w\pm i\eps$ are in the same connected component for $\eps$ sufficiently small. If $\t-w\in E_D$, then we obtain
	\begin{multline*}
		\lim_{\eps\to0^+} \int_{2i(v+\eps)}^{i\infty} \sum_{Q \in \Qc_D} \left(-\frac{\sgn\left(Q_{\tau-w+i\eps}\right)}{Q(\t-w+i\eps,1)^{k+1}} + \frac{\sgn\left(Q_{\tau-w-i\eps}\right)}{Q(\t-w-i\eps,1)^{k+1}}\right)w^{2k} dw \\
		= -2 \int_{2iv}^{i\infty} \sum_{\substack{Q\in\Qc_D\\Q_{\t-w}=0}} \frac{\sgn(Q)}{Q(\t-w,1)^{k+1}} w^{2k} dw
	\end{multline*}
	by Proposition \ref{prop:Lambdalocal} exactly as in the previous case.
\end{proof}

\section{The function \texorpdfstring{$\Psi_{-k,D}$}{\textPsi(-k)D} and the proof of Theorem \ref{thm:Psimain}} \label{sec:Psisec}

\subsection{Convergence of \texorpdfstring{$\Psi_{-k,D}$}{\textPsi(-k)D}}

We first establish convergence of $\Psi_{-k,D}$.

\begin{prop}\label{prop:Psiconv}
	The sum defining $\Psi_{-k,D}$ converges compactly on $\H\setminus E_D$, and does not converge on $E_D$.
\end{prop}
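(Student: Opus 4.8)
The plan is to rewrite the argument of the incomplete $\beta$-function by means of Lemma \ref{lem:bkkidentity}, namely $\frac{Dv^2}{\vt{Q(\tau,1)}^2}=\frac{D}{D+Q_\tau^2}$, which always lies in $(0,1]$ and equals $1$ exactly when $Q_\tau=0$, that is, exactly when $\tau\in S_Q$. Hence for $\tau\in\H\setminus E_D$ every argument occurring in the defining series lies in the open interval $(0,1)$, whereas for $\tau\in E_D$ the finitely many (by \cite{BKK}*{Lemma 5.1}) forms $Q$ with $\tau\in S_Q$ push the argument to the endpoint $1$. This dichotomy underlies the two assertions, and the rest is a size estimate for the values of $\beta$.

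For the compact convergence on $\H\setminus E_D$ I would use two elementary bounds: $\beta(x;k+\frac12,\frac12)\le\frac{\sqrt2}{k+\frac12}\,x^{k+\frac12}$ for $0<x\le\frac12$, and $\beta(x;k+\frac12,\frac12)\le\beta(1;k+\frac12,\frac12)<\infty$ for $0<x\le1$. Since $\vt{Q(\tau,1)}^2=v^2(D+Q_\tau^2)$ tends to $\infty$ as $Q$ exhausts $\Qc_D$ (otherwise the terms of $f_{k+1,D}$ would not tend to $0$), all but finitely many $Q$ satisfy $\frac{Dv^2}{\vt{Q(\tau,1)}^2}\le\frac12$, and for those $Q$ the summand is at most $\frac{\sqrt2}{k+\frac12}(Dv^2)^{k+\frac12}\vt{Q(\tau,1)}^{-(k+1)}$; since $k+1>1$, the comparison series $\sum_{Q\in\Qc_D}\vt{Q(\tau,1)}^{-(k+1)}$ converges (this being the absolute convergence behind Zagier's $f_{k+1,D}$), so the defining series converges absolutely, the finitely many remaining terms being manifestly finite. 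On a compact $K\subset\H\setminus E_D$ this argument is uniform in $\tau$: $v$ is bounded, $\inf_{\tau\in K}\vt{Q(\tau,1)}>0$ for each $Q$ (as $K$ avoids every $S_Q$), only finitely many $Q$ make $\frac{Dv^2}{\vt{Q(\tau,1)}^2}$ exceed $\frac12$ somewhere on $K$, and the remaining summands are dominated on $K$ by the summable sequence $\frac{\sqrt2}{k+\frac12}\big(D\sup_Kv^2\big)^{k+\frac12}\big(\inf_{\tau\in K}\vt{Q(\tau,1)}\big)^{-(k+1)}$; a Weierstrass $M$-test then yields compact convergence.

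The non-convergence on $E_D$ is the delicate part, and the one I expect to be the main obstacle. For $\tau\in E_D$ fix $\Qf\in\Qc_D$ with $\tau\in S_\Qf$, so $\frac{Dv^2}{\vt{\Qf(\tau,1)}^2}=1$. Here I would exploit the boundary expansion $\beta(x;k+\frac12,\frac12)=\beta(1;k+\frac12,\frac12)-2\sqrt{1-x}+O\big((1-x)^{3/2}\big)$ as $x\to1^-$, which near $S_\Qf$ reads $\beta\big(\frac{Dv^2}{\vt{\Qf(\tau,1)}^2};k+\frac12,\frac12\big)=\beta(1;k+\frac12,\frac12)-\frac{2}{\sqrt D}\vt{\Qf_\tau}+\cdots$, and analyse the partial sums over $\{Q\in\Qc_D:\vt{Q(\tau,1)}\le N\}$ so as to exhibit that they do not converge as $N\to\infty$. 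The natural inputs are Lemma \ref{lem:localnonzero} (the jump of $\La_{k+1,D}$ across $E_D$ is genuinely non-zero) and Theorem \ref{thm:Psimain} (2), which presents $\Psi_{-k,D}$, up to an additive and a multiplicative constant, as an Eichler integral of $\La_{k+1,D}$, whose naive term-by-term Eichler integral over $\Qc_D$ fails to converge because $Q(\cdot,1)^{-(k+1)}$ decays only polynomially at $i\infty$. Identifying precisely the subseries that destroys convergence at a point of $E_D$, while it remains harmless immediately off $E_D$, is the heart of the argument.
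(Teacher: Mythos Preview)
Your argument for compact convergence on $\H\setminus E_D$ is correct and is essentially the content of \cite{BKK}*{Proposition~4.1}, which the paper simply cites; the bound $\beta(x;k+\tfrac12,\tfrac12)\ll x^{k+\frac12}$ for small $x$ reduces the tail to a constant multiple of $\sum_{Q}|Q(\t,1)|^{-(k+1)}$, which converges as $k+1\ge2$.

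The gap is in the non-convergence half, and the obstacle you flagged as ``delicate'' is real: the series actually \emph{converges} absolutely on $E_D$ as well. Your own convergence argument applies verbatim there---the set $\{Q\in\Qc_D:Q_\tau^2<D\}$ is still finite (as you observed via \cite{BKK}*{Lemma~5.1}), the tail is still dominated by $C\,|Q(\tau,1)|^{-(k+1)}$, and the finitely many exceptional terms are each finite since $\beta(1;k+\tfrac12,\tfrac12)<\infty$ and $|Q(\tau,1)|=v\sqrt{D}$ when $Q_\tau=0$. The paper's one-line argument (``the incomplete $\beta$-function reduces to a constant\ldots the sum is infinite'') tacitly treats every summand as having argument $1$, which would require $Q_\tau=0$ for all $Q$; that is false, as you noted. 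So the second assertion of the proposition, as written, appears to be incorrect, and your outline based on Eichler integrals and Lemma~\ref{lem:localnonzero} cannot be completed to a proof of divergence. What is true, and what is actually needed later (Proposition~\ref{prop:Psilocal}~(3) and Theorem~\ref{thm:Psimain}~(1)), is that $\Psi_{-k,D}$ fails to be \emph{differentiable} across $E_D$, not that the defining series diverges there.
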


\begin{proof}
	 If $\t\in\H\setminus E_D$, then $\sgn(Q_\t)=\pm1$ and thus the claim follows directly by \cite{BKK}*{Proposition 4.1} after summing over all narrow equivalence classes there. (The class number of positive discriminants is finite.) If $\t\in E_D$, then the incomplete $\b$-function reduces to a constant depending only on $k$ according to Lemma \ref{lem:bkkidentity}. Hence, the sum defining $\Psi_{-k,D}$ does not converge on $E_D$ as the sum is infinite and $\b(1;k+\frac12,\frac12)\ne0$.
\end{proof}

\subsection{Behaviour of \texorpdfstring{$\Psi_{-k,D}$}{\textPsi(-k)D} under differentiation}

We inspect the behaviour of $\Psi_{-k,D}$ under differential operators.

\begin{prop}\label{prop:Psidiff}
	Let $\t\in\H\setminus E_D$. 
	\begin{enumerate}[leftmargin=*,label=\rm{(\arabic*)}]
		\item We have
		\[
			\xi_{-2k}(\Psi_{-k,D}(\tau)) = D^{k+\frac12}\La_{k+1,D}(\tau).
		\]
		\item We have
		\[
			\D^{2k+1}(\Psi_{-k,D}(\tau)) = -\frac{D^{k+\frac12}(2k)!}{(4\pi)^{2k+1}}\La_{k+1,D}(\tau).
		\]
		\item We have
		\[
			\De_{-2k}(\Psi_{-k,D}(\tau)) = 0.
		\]
	\end{enumerate}
\end{prop}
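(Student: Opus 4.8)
The plan is to prove the three differentiation identities in Proposition \ref{prop:Psidiff} by reducing everything to a term-by-term computation, legitimate since Proposition \ref{prop:Psiconv} gives compact convergence on $\H\setminus E_D$ and one checks (as in \cite{BKK}*{Proposition 4.1}) that the series of derivatives converges compactly as well, so differentiation can be carried out inside the sum. Thus it suffices to differentiate the single building block
\[
	g_Q(\t) \coloneqq Q(\t,1)^k\,\b\!\left(\frac{Dv^2}{\vt{Q(\t,1)}^2};k+\tfrac12,\tfrac12\right).
\]
The key observation is that $\frac{Dv^2}{\vt{Q(\t,1)}^2} = 1 - \frac{Q_\t^2v^2}{\vt{Q(\t,1)}^2}$ by Lemma \ref{lem:bkkidentity}, so the argument of the $\b$-function is controlled by the single real quantity $t_Q \coloneqq \frac{Dv^2}{\vt{Q(\t,1)}^2}$, and $1-t_Q = \frac{Q_\t^2v^2}{\vt{Q(\t,1)}^2}$.

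For part (1), I would compute $\xi_{-2k}(g_Q) = 2iv^{-2k}\ol{\partial_{\ol\t}\,g_Q}$. Since $Q(\t,1)^k$ is holomorphic, $\partial_{\ol\t}$ only hits the $\b$-function; by the fundamental theorem of calculus $\frac{d}{dx}\b(x;k+\tfrac12,\tfrac12) = x^{k-\frac12}(1-x)^{-\frac12}$, so I need $\partial_{\ol\t} t_Q$. Writing $t_Q = D v^2 / (Q(\t,1)\,Q(\ol\t,1))$ and using Lemma \ref{lem:holomzoo} (specifically $\partial_{\ol\t}\frac{v^2}{Q(\ol\t,1)} = \frac{iv^2Q_\t}{Q(\ol\t,1)^2}$) and $\partial_{\ol\t}Q(\t,1)=0$, one gets a clean expression; plugging $t_Q^{k-\frac12}(1-t_Q)^{-\frac12}$ with $1-t_Q = Q_\t^2v^2/\vt{Q(\t,1)}^2$ and $t_Q^{k-\frac12} = (Dv^2)^{k-\frac12}/\vt{Q(\t,1)}^{2k-1}$, the powers of $\vt{Q(\t,1)}$, the factor $Q(\t,1)^k$, and the $v$-powers should collapse (after taking the complex conjugate, which flips $Q_\t\mapsto -Q_\t$ hence produces $\sgn(Q_\t)$) to exactly $D^{k+\frac12}\sgn(Q_\t)/Q(\t,1)^{k+1}$, i.e.\ the $Q$-term of $D^{k+\frac12}\La_{k+1,D}$. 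Part (2) I would deduce from part (1) almost for free: by the splitting \eqref{eq:split}, $\Psi_{-k,D}$ being annihilated by $\Delta_{-2k}$ (part (3)) forces $\xi_{2k+2}(\xi_{-2k}(\Psi_{-k,D}))=0$, i.e.\ $\xi_{-2k}(\Psi_{-k,D})$ is holomorphic; combined with Bol's identity \eqref{eq:bolidentity} relating $\D^{2k+1}$ to the iterated raising operator $R_{-2k}^{2k+1}$ and the standard relation between $\D^{1-\k}$ and the holomorphic/non-holomorphic parts, one gets $\D^{2k+1}(\Psi_{-k,D})$ proportional to $\xi_{-2k}(\Psi_{-k,D})$ with the constant $-\frac{(2k)!}{(4\pi)^{2k+1}}$; alternatively I would just differentiate $g_Q$ directly $2k+1$ times, noting that $\D^{2k+1}$ kills the anti-holomorphic dependence and using that the holomorphic part of $g_Q$ (the piece coming from expanding $\b$ in a Taylor-type series, cf.\ the Eichler integral structure in Theorem \ref{thm:Psimain} (2)) has the right derivative by Proposition \ref{prop:psiprop} (2) applied with a shifted index. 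For part (3) I would either invoke \eqref{eq:split} together with parts (1), (2) — since $\Delta_{-2k} = -\xi_{2k+2}\circ\xi_{-2k}$ and $\xi_{-2k}(\Psi_{-k,D}) = D^{k+\frac12}\La_{k+1,D}$ is holomorphic of weight $2k+2$, hence annihilated by $\xi_{2k+2}$ — or verify directly that each $g_Q$ has Laplace eigenvalue $0$ by a somewhat longer but routine second-order computation using Lemmas \ref{lem:holomzoo} and \ref{lem:Qtricks}.

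I expect the main obstacle to be the bookkeeping in part (1): correctly tracking the interplay of $Q(\t,1)$, $Q(\ol\t,1)$, $\vt{Q(\t,1)}$, $Q_\t$, and powers of $v$ through the chain rule, and in particular getting the sign right — the appearance of $\sgn(Q_\t)$ comes precisely from $\sqrt{1-t_Q} = \sqrt{Q_\t^2v^2/\vt{Q(\t,1)}^2} = \vt{Q_\t}v/\vt{Q(\t,1)}$ rather than $Q_\t v/\vt{Q(\t,1)}$, and one must be careful that this square root is the non-negative one while $Q_\t$ itself changes sign across $E_D$ (which is exactly why the identity only holds on $\H\setminus E_D$). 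Once part (1) is pinned down cleanly, parts (2) and (3) follow either by the soft argument via \eqref{eq:split} and Bol's identity \eqref{eq:bolidentity}, or by entirely parallel (if slightly longer) direct computations; I would present part (1) in full detail and then obtain (3), and thence (2), from the operator identity $\Delta_{-2k} = -\xi_{2k+2}\circ\xi_{-2k}$ plus the holomorphicity of $\La_{k+1,D}$.
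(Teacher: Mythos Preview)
Your approach to parts (1) and (3) is correct and matches the paper: compute $\xi_{-2k}$ termwise via the chain rule on the incomplete $\b$-function (the paper records this as equation \eqref{eq:betaHdiff}), then deduce (3) from $\De_{-2k}=-\xi_{2k+2}\circ\xi_{-2k}$ and the holomorphicity of $\La_{k+1,D}$.

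The gap is in part (2). Your ``soft'' deduction of (2) from (1) and (3) does not work: harmonicity together with knowledge of $\xi_{-2k}(\Psi_{-k,D})$ does \emph{not} determine $\D^{2k+1}(\Psi_{-k,D})$. Indeed, adding any holomorphic function $h$ to $\Psi_{-k,D}$ leaves both $\xi_{-2k}$ and $\De_{-2k}$ unchanged while altering $\D^{2k+1}$ arbitrarily (take $h=e^{2\pi i\t}$, for which $\xi_{-2k}(h)=0$, $\De_{-2k}(h)=0$, but $\D^{2k+1}(h)=e^{2\pi i\t}\ne0$). There is no ``standard relation'' making $\D^{2k+1}$ proportional to $\xi_{-2k}$ on general harmonic functions; such a relation only arises once one has already decomposed into holomorphic and non-holomorphic parts via the explicit Eichler-integral splitting --- which is exactly Theorem \ref{thm:Psimain} (2), whose proof \emph{uses} Proposition \ref{prop:Psidiff}, so invoking it here is circular. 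Your alternative suggestion to ``differentiate $g_Q$ directly $2k+1$ times'' via Proposition \ref{prop:psiprop} (2) also does not go through as stated: that proposition treats $\Log\big(\frac{\t-\a_Q^-}{\t-\a_Q^+}\big)Q(\t,1)^{n-1}$, not the $\b$-function, and $\D^{2k+1}$ does not simply ``kill the anti-holomorphic dependence'' of $g_Q$ (for instance $\D(v)\ne0$).

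The paper proves (2) by a genuine direct computation: an induction (Lemma \ref{lem:bolPsiInduction}) showing
\[
	\frac{\del^{2n+1}}{\del\t^{2n+1}}\left(Q(\t,1)^n\b\!\left(\tfrac{Dv^2}{\vt{Q(\t,1)}^2};n+\tfrac12,\tfrac12\right)\right) = \frac{i(-1)^{n+1}D^{n+\frac12}(2n)!\sgn(Q_\t)}{2^{2n}Q(\t,1)^{n+1}},
\]
built on two auxiliary lemmas (a recursion for the $\b$-term in $n$, Lemma \ref{lem:fnrec}, and an annihilation result, Lemma \ref{lem:gnzero}) together with Bol's identity to pass between $\partial_\t^{2k+1}$ and the iterated raising operator at intermediate steps. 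This is the substantive content of the proposition, and your proposal does not supply an argument for it.
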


Define
\[
	g_n^{[1]}(\t) \coloneqq Q(\t,1)^n\b\left(\frac{Dv^2}{\vt{Q(\t,1)}^2};n+\frac12,\frac12\right),\qquad n \in \N_0.
\]
The proof of Proposition \ref{prop:Psidiff} is based on the following three technical lemmas.

\begin{lemma}\label{lem:fnrec}
	We have for $n\in\N_0$
	\[
		g_{n+1}^{[1]}(\t) = \frac{n+\frac12}{n+1}Q(\t,1)g_n^{[1]}(\t) - \frac{D^{n+\frac12}}{n+1}\frac{v^{2n+2}\vt{Q_\t}}{Q\left(\ol\t,1\right)^{n+1}}.
	\]
\end{lemma}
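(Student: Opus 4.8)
The plan is to prove the recursion by differentiating the incomplete $\b$-function. Write $g_n^{[1]}(\t) = Q(\t,1)^n \b(x_\t; n+\tfrac12,\tfrac12)$ where $x_\t \coloneqq \frac{Dv^2}{\vt{Q(\t,1)}^2}$. The cleanest route is to use the standard recursion for incomplete $\b$-functions in the first parameter. Recall that integrating by parts in $\b(x;s,w)=\int_0^x t^{s-1}(1-t)^{w-1}dt$ (differentiating $(1-t)^{w-1}$, integrating $t^{s-1}$) gives
\[
	\b(x;s,w) = \frac{x^s(1-x)^{w}}{s} + \frac{w}{s}\,\b(x;s+1,w),
\]
valid for $\re(s)>0$; equivalently, with $s=n+\tfrac12$, $w=\tfrac12$,
\[
	\b\!\left(x;n+\tfrac32,\tfrac12\right) = \frac{n+\tfrac12}{n+1}\,\b\!\left(x;n+\tfrac12,\tfrac12\right) - \frac{n+\tfrac12}{n+1}\cdot\frac{x^{n+\frac12}(1-x)^{\frac12}}{n+\frac12}.
\]
Here I would verify the elementary integration-by-parts identity directly (it is standard, so a one-line justification suffices), and note $\re(n+\tfrac12)>0$ so there are no convergence issues and $x_\t\in(0,1]$ on $\H\setminus E_D$ by Lemma \ref{lem:bkkidentity}, so the incomplete $\b$-functions are well-defined.

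Next I would substitute $x = x_\t = \frac{Dv^2}{\vt{Q(\t,1)}^2}$ and multiply through by $Q(\t,1)^{n+1}$. By Lemma \ref{lem:bkkidentity}, $Dv^2 + Q_\t^2 v^2 = \vt{Q(\t,1)}^2$, hence $1 - x_\t = \frac{Q_\t^2 v^2}{\vt{Q(\t,1)}^2}$, so $(1-x_\t)^{1/2} = \frac{\vt{Q_\t}\,v}{\vt{Q(\t,1)}}$. Therefore
\[
	x_\t^{n+\frac12}(1-x_\t)^{\frac12} = \frac{D^{n+\frac12}v^{2n+1}}{\vt{Q(\t,1)}^{2n+1}}\cdot\frac{\vt{Q_\t}\,v}{\vt{Q(\t,1)}} = \frac{D^{n+\frac12}\,v^{2n+2}\,\vt{Q_\t}}{\vt{Q(\t,1)}^{2n+2}}.
\]
Multiplying by $Q(\t,1)^{n+1}$ and using $\vt{Q(\t,1)}^{2n+2} = Q(\t,1)^{n+1}\ol{Q(\t,1)}^{\,n+1} = Q(\t,1)^{n+1}Q(\ol\t,1)^{n+1}$ gives
\[
	Q(\t,1)^{n+1}x_\t^{n+\frac12}(1-x_\t)^{\frac12} = \frac{D^{n+\frac12}\,v^{2n+2}\,\vt{Q_\t}}{Q(\ol\t,1)^{n+1}}.
\]
Combining, $g_{n+1}^{[1]}(\t) = Q(\t,1)^{n+1}\b(x_\t; n+\tfrac32,\tfrac12) = \frac{n+\frac12}{n+1}Q(\t,1)\,g_n^{[1]}(\t) - \frac{D^{n+\frac12}}{n+1}\cdot\frac{v^{2n+2}\vt{Q_\t}}{Q(\ol\t,1)^{n+1}}$, which is exactly the claimed identity.

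There is no serious obstacle here; the only points needing care are (i) confirming that $x_\t \in (0,1]$ so the incomplete $\b$-function and the identity apply — which is immediate from $Dv^2 \le \vt{Q(\t,1)}^2$ via Lemma \ref{lem:bkkidentity}, with equality precisely on $E_D$ (excluded) — and (ii) correctly tracking the complex factors $\vt{Q(\t,1)}^2 = Q(\t,1)Q(\ol\t,1)$ so that the denominator $Q(\ol\t,1)^{n+1}$ appears rather than $\vt{Q(\t,1)}^{2n+2}$. Both are bookkeeping. I would present this as: first state and prove the integration-by-parts recursion for incomplete $\b$-functions, then substitute and simplify using Lemma \ref{lem:bkkidentity}.
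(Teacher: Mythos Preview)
Your approach is essentially the same as the paper's: both invoke the contiguous relation for the incomplete $\b$-function in the first parameter, then substitute $x_\t=\frac{Dv^2}{\vt{Q(\t,1)}^2}$ and simplify via Lemma \ref{lem:bkkidentity}. There is, however, an error in your stated general integration-by-parts identity
\[
	\b(x;s,w) = \frac{x^s(1-x)^{w}}{s} + \frac{w}{s}\,\b(x;s+1,w);
\]
this is false (take $s=w=1$: the left side is $x$, the right side is $x-\tfrac{x^2}{2}$). The integration by parts you describe actually produces $\b(x;s,w)=\frac{x^s(1-x)^{w-1}}{s}+\frac{w-1}{s}\b(x;s+1,w-1)$, which is not the recursion you need. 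The correct relation is
\[
	\b(x;s+1,w) = \frac{s}{s+w}\,\b(x;s,w) - \frac{x^s(1-x)^{w}}{s+w},
\]
obtained either by combining $\frac{d}{dt}\big(t^s(1-t)^w\big)=st^{s-1}(1-t)^w-wt^s(1-t)^{w-1}$ with $\b(x;s,w+1)=\b(x;s,w)-\b(x;s+1,w)$, or directly from \cite{nist}*{(8.17.20)} as the paper does. Fortunately your displayed specialization at $s=n+\tfrac12$, $w=\tfrac12$ is exactly this correct recursion (so it does \emph{not} follow from your general formula as written), and the remainder of your argument---evaluating $x_\t^{n+\frac12}(1-x_\t)^{\frac12}$ via Lemma \ref{lem:bkkidentity} and splitting $\vt{Q(\t,1)}^{2n+2}=Q(\t,1)^{n+1}Q(\ol\t,1)^{n+1}$---is correct and matches the paper.
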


\begin{proof}
	By \cite{nist}*{(8.17.20)}, we have that
	\[
		\frac{\b(x;a,b)}{\b(1;a,b)} = \frac{\b(x;a+1,b)}{\b(1;a+1,b)} + \frac{x^a(1-x)^b}{a\b(1;a,b)}.
	\]
	This gives that
	\begin{multline*}
		\b\left(\frac{Dv^2}{\vt{Q(\t,1)}^2};n+\frac32,\frac12\right)\\
		= \frac{\b\left(1;n+\frac32,\frac12\right)}{\b\left(1;n+\frac12,\frac12\right)} \left(\b\left(\frac{Dv^2}{\vt{Q(\t,1)}^2};n+\frac12,\frac12\right) - \frac{\left(\frac{Dv^2}{\vt{Q(\t,1)}^2}\right)^{n+\frac12} \left(1-\frac{Dv^2}{\vt{Q(\t,1)}^2}\right)^\frac12}{n+\frac12}\right).
	\end{multline*}
	Using Lemma \ref{lem:bkkidentity}, we compute
	\[
		\left(\frac{Dv^2}{\vt{Q(\t,1)}^2}\right)^{n+\frac12} \left(1-\frac{Dv^2}{\vt{Q(\t,1)}^2}\right)^\frac12 = \frac{D^{n+\frac12}v^{2n+2}\vt{Q_\t}}{\vt{Q(\t,1)}^{2n+2}}, 
	\]
	and since $\frac{\b(1;n+\frac32,\frac12)}{\b(1;n+\frac12,\frac12)}=\frac{n+\frac12}{n+1}$, we obtain the claim.
\end{proof}

Lemma \ref{lem:fnrec} motivates to define the auxiliary function
\[
	g_n^{[2]}(\t) \coloneqq \frac{D^{n-\frac12}v^{2n}\vt{Q_\t}}{Q\left(\ol\t,1\right)^n}.
\]
The second technical lemma treats the image of $g_{n+1}^{[2]}$ under differentiation.

\begin{lemma}\label{lem:gnzero}
	We have for $n\in\N$
	\[
		\frac{\del^{2n+1}}{\del\t^{2n+1}} g_n^{[2]}(\t) = 0.
	\]
\end{lemma}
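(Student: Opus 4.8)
The plan is to show that $g_n^{[2]}(\t) = \frac{D^{n-\frac12}v^{2n}\vt{Q_\t}}{Q(\ol\t,1)^n}$ is, up to a sign coming from the branch of $\vt{Q_\t}$ on each connected component of $\H\setminus E_D$, a polynomial of degree at most $2n$ in $\t$, so that its $(2n+1)$-st $\t$-derivative vanishes. First I would record the key structural fact: by Lemma \ref{lem:bkkidentity} we have $v^2\vt{Q_\t}^2 = \vt{Q(\t,1)}^2 - Dv^2 = Q(\t,1)Q(\ol\t,1) - Dv^2$, but a cleaner route is to use Lemma \ref{lem:Qtricks}, namely $Q_\t v = Q(\t,1) - ivQ'(\t,1)$, which exhibits $vQ_\t$ directly as a polynomial in $\t$ (degree $1$, since $Q$ has degree $2$ and $Q'$ degree $1$ — the leading terms combine so that $vQ_\t$ has degree exactly $1$ in $\t$, consistent with $Q_\t\asymp a\,\im(\t)$ as $\im(\t)\to\infty$). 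On a fixed connected component of $\H\setminus E_D$ the sign $\sgn(Q_\t)$ is constant, so $v\vt{Q_\t} = \sgn(Q_\t)(Q(\t,1)-ivQ'(\t,1))$ is a polynomial of degree $1$ in $\t$ there; raising to suitable powers, $v^{2n}\vt{Q_\t}^{2n-1}$ is (up to the constant sign $\sgn(Q_\t)$) a polynomial of degree $2n-1$.

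Next I would rewrite $g_n^{[2]}$ to isolate its holomorphic content. Using $v\vt{Q_\t}^2 \cdot v = \vt{Q(\t,1)}^2 - Dv^2 = Q(\t,1)Q(\ol\t,1) - Dv^2$ is awkward because of the $Q(\ol\t,1)$ in the denominator and the anti-holomorphic factor. Instead, multiply numerator and denominator appropriately: write
\[
	g_n^{[2]}(\t) = D^{n-\frac12}\,\vt{Q_\t}\,\frac{v^{2n}}{Q(\ol\t,1)^n} = D^{n-\frac12}\,\sgn(Q_\t)\,(vQ_\t)\,\frac{v^{2n-1}}{Q(\ol\t,1)^n}.
\]
Now $v^{2n-1}/Q(\ol\t,1)^n$ is not holomorphic, so this alone does not finish it; the better decomposition is to note $\frac{v^2 Q_\t}{Q(\ol\t,1)} = \frac{v\cdot vQ_\t}{Q(\ol\t,1)}$ and to combine with the first identity of Lemma \ref{lem:Qtricks} applied at $\ol\t$: $Q_{\ol\t}v + iv Q'(\ol\t,1) = Q(\ol\t,1)$, i.e. (using $Q_{\ol\t}=-Q_\t$) $Q(\ol\t,1) = -vQ_\t + ivQ'(\ol\t,1)$. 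This still mixes holomorphic and anti-holomorphic pieces. The clean statement I actually expect to use is: $\dfrac{v^2 Q_\t}{Q(\ol\t,1)}$ is, on each component, $\pm$ a degree-$1$ polynomial in $\t$ — this should follow by combining $vQ_\t = \sgn(Q_\t)^{-1}\vt{vQ_\t}$, the identity $v^2\vt{Q_\t}^2 = \vt{Q(\t,1)}^2 - Dv^2$, and direct manipulation, and it is exactly the kind of fact already used implicitly in Lemma \ref{lem:holomzoo}(2) (where $\del_{\ol\t}\frac{v^2}{Q(\ol\t,1)} = \frac{iv^2 Q_\t}{Q(\ol\t,1)^2}$). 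Given that, $g_n^{[2]}(\t) = D^{n-\frac12}\sgn(Q_\t)\bigl(\tfrac{v^2Q_\t}{Q(\ol\t,1)}\bigr)^{n-1}\cdot vQ_\t \cdot \tfrac{1}{\text{(something holomorphic of the right degree)}}$; more carefully one checks $g_n^{[2]} = D^{n-\frac12}\sgn(Q_\t) \cdot (vQ_\t) \cdot \bigl(\tfrac{v^2 Q_\t}{Q(\ol\t,1)}\bigr)^{n-1}/(Q(\ol\t,1))^{0}$ — I would verify the bookkeeping so that all the $v$'s and $Q(\ol\t,1)$'s cancel correctly, leaving a polynomial in $\t$ of degree $(n-1)+1 = n \le 2n$.

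The main obstacle is precisely this bookkeeping step: confirming that after substituting the Lemma \ref{lem:Qtricks} identities the anti-holomorphic denominator $Q(\ol\t,1)^n$ and all powers of $v$ cancel to leave a genuine polynomial in $\t$ of degree $\le 2n$, rather than merely a function annihilated by $\del_\t^{2n+1}$ for some subtler reason. Once that polynomiality is established, the conclusion $\del_\t^{2n+1} g_n^{[2]} = 0$ is immediate since the degree is at most $2n < 2n+1$, and the locally constant factor $\sgn(Q_\t)$ on $\H\setminus E_D$ does not interfere with differentiation. I would therefore structure the proof as: (i) express $vQ_\t$ and $\frac{v^2Q_\t}{Q(\ol\t,1)}$ via Lemma \ref{lem:Qtricks} as ($\pm$) low-degree polynomials in $\t$ on each component of $\H\setminus E_D$; (ii) substitute into the definition of $g_n^{[2]}$ and simplify to obtain a polynomial of degree $\le n$; (iii) differentiate $2n+1$ times to get $0$.
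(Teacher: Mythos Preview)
Your overall strategy—showing that on each connected component of $\H\setminus E_D$ the function $g_n^{[2]}$ is, up to the locally constant sign $\sgn(Q_\t)$, a polynomial of degree at most $2n$ in $\t$—is correct and gives a cleaner proof than the paper's, which proceeds by induction on $n$ via the recurrence $\del_\t^{\ell+2}(v^{\ell+1}Q_\t)=-\tfrac{i(\ell+1)}{2}\del_\t^{\ell+1}(v^\ell Q_\t)$ derived from Lemma~\ref{lem:holomzoo}(1). Your route avoids the induction entirely.

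However, your execution is tangled because of one misconception. You write that $v^{2n-1}/Q(\ol\t,1)^n$ ``is not holomorphic, so this alone does not finish it'', and then embark on an unnecessary and ultimately incorrect decomposition (the product $(vQ_\t)\cdot\bigl(\tfrac{v^2Q_\t}{Q(\ol\t,1)}\bigr)^{n-1}$ does not equal $v^{2n}Q_\t/Q(\ol\t,1)^n$, and the resulting degree count ``$\le n$'' is wrong). The point you are missing is that $\del/\del\t$ is the Wirtinger derivative: it treats $\ol\t$ as an independent constant. Consequently $Q(\ol\t,1)^n$ is constant under $\del_\t$, and $v=\tfrac{\t-\ol\t}{2i}$ is a degree-$1$ polynomial in $\t$. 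There is nothing to cancel.

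With this in hand the argument is immediate. Since $vQ_\t = a\t\ol\t+\tfrac{b}{2}(\t+\ol\t)+c$ has degree $1$ in $\t$, the product
\[
	v^{2n}Q_\t = v^{2n-1}\cdot(vQ_\t)
\]
has degree $(2n-1)+1=2n$ in $\t$. Dividing by the $\del_\t$-constant $Q(\ol\t,1)^n$ and multiplying by the locally constant $D^{n-\frac12}\sgn(Q_\t)$ does not change this, so $\del_\t^{2n+1}g_n^{[2]}=0$. That is the whole proof; steps (i)--(iii) of your plan collapse to this one line once the Wirtinger point is clear.
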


\begin{proof}
	We prove the claim by induction. If $n=1$, then the claim follows by applying Lemma \ref{lem:holomzoo} (1) three times. For the induction step, Lemma \ref{lem:holomzoo} (1) yields that
	\[
		\frac{\del}{\del\t} \left(v^{\ell+2}Q_\t\right) = -\frac i2\ell v^{\ell+1}Q_\t + \frac i2v^\ell Q\left(\ol\t,1\right)
	\]
	for every $\ell\in\N_0$. Noting that $\frac{\del^{\ell+1}}{\del\t^{\ell+1}}(v^\ell Q(\ol\t,1))=0$, we obtain
	\[
		\frac{\del^{\ell+2}}{\del\t^{\ell+2}} \left(v^{\ell+1}Q_\t\right) = -\frac i2(\ell+1) \frac{\del^{\ell+1}}{\del\t^{\ell+1}} \left(v^\ell Q_\t\right).
	\]
	Consequently, we find that
	\[
		\frac{\del^{2n+3}}{\del\t^{2n+3}} g_{n+1}^{[2]}(\t) = -\frac{D(2n+2)(2n+1)}{4Q\left(\ol\t,1\right)} \frac{\del^{2n+1}}{\del\t^{2n+1}} g_n^{[2]}(\t).
	\]
	The right-hand side vanishes by the induction hypothesis, as desired.
\end{proof}

The third lemma contains the main technical claim.

\begin{lemma}\label{lem:bolPsiInduction}
	We have for $n\in\N_0$
	\[
		\frac{\del^{2n+1}}{\del\t^{2n+1}} g_n^{[1]}(\t) = \frac{i(-1)^{n+1}D^{n+\frac12}(2n)!\sgn(Q_\t)}{2^{2n}Q(\t,1)^{n+1}}.
	\]
\end{lemma}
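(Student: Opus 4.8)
The plan is to prove the identity by induction on $n$, feeding Lemmas \ref{lem:fnrec} and \ref{lem:gnzero} into a Leibniz expansion, and using that on $\H\setminus E_D$ the factor $\sgn(Q_\t)$ is locally constant, hence contributes nothing to $\t$-derivatives there.

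For the base case $n=0$, I would differentiate $g_0^{[1]}(\t)=\b\big(x(\t);\tfrac12,\tfrac12\big)$ with $x(\t)\coloneqq\tfrac{Dv^2}{\vt{Q(\t,1)}^2}$ directly. On $\H\setminus E_D$ we have $x(\t)\in(0,1)$, so the chain rule gives $\tfrac{\del}{\del\t}g_0^{[1]}(\t)=x^{-\frac12}(1-x)^{-\frac12}\,\tfrac{\del x}{\del\t}$. Lemma \ref{lem:bkkidentity} identifies $1-x=\tfrac{Q_\t^2v^2}{\vt{Q(\t,1)}^2}$, while Lemmas \ref{lem:holomzoo} and \ref{lem:Qtricks} give $\tfrac{\del}{\del\t}\tfrac{v^2}{\vt{Q(\t,1)}^2}=\tfrac{1}{Q(\ol\t,1)}\tfrac{\del}{\del\t}\tfrac{v^2}{Q(\t,1)}=-\tfrac{iQ_\t v^2}{Q(\t,1)^2Q(\ol\t,1)}$. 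Multiplying these out and using $\vt{Q(\t,1)}^2=Q(\t,1)Q(\ol\t,1)$ collapses everything to $-\tfrac{i\sqrt{D}\,\sgn(Q_\t)}{Q(\t,1)}$, which is the assertion for $n=0$.

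For the induction step, I would rewrite Lemma \ref{lem:fnrec} in the form $g_{n+1}^{[1]}(\t)=\tfrac{n+\frac12}{n+1}Q(\t,1)g_n^{[1]}(\t)-\tfrac{1}{n+1}g_{n+1}^{[2]}(\t)$, apply $\tfrac{\del^{2n+3}}{\del\t^{2n+3}}$ to it, and observe that the last term dies by Lemma \ref{lem:gnzero} (applied with index $n+1\in\N$). Since $Q(\t,1)$ has degree $2$ in $\t$, the Leibniz rule expands $\tfrac{\del^{2n+3}}{\del\t^{2n+3}}\big(Q(\t,1)g_n^{[1]}(\t)\big)$ into just three terms, in which $Q(\t,1)$, $Q'(\t,1)$, $Q''(\t,1)$ multiply $\tfrac{\del^{j}}{\del\t^{j}}g_n^{[1]}$ for $j=2n+3,2n+2,2n+1$ with binomial coefficients $1$, $2n+3$, $\binom{2n+3}{2}$. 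Writing the induction hypothesis as $\tfrac{\del^{2n+1}}{\del\t^{2n+1}}g_n^{[1]}(\t)=C_n\,\sgn(Q_\t)\,Q(\t,1)^{-(n+1)}$, with $C_n\coloneqq\tfrac{i(-1)^{n+1}D^{n+\frac12}(2n)!}{2^{2n}}$, I differentiate it once and twice (only the power of $Q(\t,1)$ being affected, since $\sgn(Q_\t)$ is locally constant) and substitute. Collecting the resulting $Q'(\t,1)^2$ and $Q''(\t,1)Q(\t,1)$ contributions and invoking $Q'(\t,1)^2-2Q''(\t,1)Q(\t,1)=D$ from Lemma \ref{lem:Qtricks}, everything collapses to $-D(n+1)^2C_n\,\sgn(Q_\t)\,Q(\t,1)^{-(n+2)}$; the remaining constant check $-\tfrac{n+\frac12}{n+1}D(n+1)^2C_n=C_{n+1}$ boils down to the elementary identity $(2n+2)!=2(n+1)(2n+1)(2n)!$.

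I expect the main obstacle to be the bookkeeping: tracking the coefficients $2n+3$ and $\binom{2n+3}{2}$ from the Leibniz expansion together with the powers of $2$, the $(-1)$-signs and the factorials, and verifying that all $Q'(\t,1)^2$-contributions cancel except for the multiple of $D$ produced by Lemma \ref{lem:Qtricks}. A secondary technical point is justifying that $\sgn(Q_\t)$ may be treated as constant under $\tfrac{\del}{\del\t}$ — legitimate precisely because the statement is asserted only on $\H\setminus E_D$, where each $Q_\t$ keeps a fixed sign in a neighborhood of $\t$ — and that $x(\t)\in(0,1)$ there, so that the derivative formula $\tfrac{d}{dx}\b(x;\tfrac12,\tfrac12)=x^{-\frac12}(1-x)^{-\frac12}$ is available.
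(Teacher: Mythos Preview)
Your proposal is correct and in fact streamlines the paper's argument. Both proofs share the same base case $n=0$ (the computation you sketch is exactly \eqref{eq:betaHdiff} specialized to $n=0$) and the same opening move for the step: apply Lemma~\ref{lem:fnrec} and kill $g_{n+1}^{[2]}$ via Lemma~\ref{lem:gnzero}, leaving $\tfrac{\del^{2n+3}}{\del\t^{2n+3}}\big(Q(\t,1)g_n^{[1]}(\t)\big)$ to compute. From there, however, the paper does \emph{not} apply Leibniz directly. Instead it peels off derivatives one at a time using \eqref{eq:betaHdiff}, reducing first to $2n+2$ and then to $2n+1$ derivatives; in the course of this it produces a term $g_n^{[1]}/Q$, which it rewrites via a second application of Lemma~\ref{lem:fnrec} (with $n\mapsto n-1$) in terms of $g_{n-1}^{[1]}$. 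Consequently the paper needs the induction hypothesis at \emph{both} levels $n$ and $n-1$, forcing a separate verification of the case $n=1$ (done there with iterated Maass raising operators). Your route---differentiate the induction hypothesis itself once and twice to obtain $\tfrac{\del^{2n+2}}{\del\t^{2n+2}}g_n^{[1]}$ and $\tfrac{\del^{2n+3}}{\del\t^{2n+3}}g_n^{[1]}$, then feed these into the three-term Leibniz expansion---is a genuine simplification: it is a one-step induction needing only $n=0$, and the algebra collapses cleanly to $-(n+1)^2\big((Q')^2-2Q''Q\big)=-D(n+1)^2$ via Lemma~\ref{lem:Qtricks}, exactly as you indicate. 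The paper's route has the mild advantage of establishing the more general \eqref{eq:betaHdiff} along the way, but for the lemma as stated your argument is shorter.
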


\begin{proof}
	We prove the lemma by induction.\\
	{\bf Step $1$: The case $n=0$}\\
	We apply the Fundamental Theorem of Calculus, Lemma \ref{lem:bkkidentity}, and Lemma \ref{lem:holomzoo}, yielding
	\begin{equation}\label{eq:betaHdiff}
		\frac{\del}{\del\t} \b\left(\frac{Dv^2}{\vt{Q(\t,1)}^2};n+\frac12,\frac12\right) = -\frac{iD^{n+\frac12}v^{2n}\sgn(Q_\t)}{\vt{Q(\t,1)}^{2n}Q(\t,1)}
	\end{equation}
	for every $n\in\N_0$. In particular, this proves the desired identity for $n=0$.\\
	{\bf Step $2$: The case $n=1$}\\
	Using \eqref{eq:betaHdiff} and the first identity of Lemma \ref{lem:Qtricks}, we compute that
	\[
		R_{-2n}\left(g_n^{[1]}(\t)\right) = -2nQ_\t\frac{g_n^{[1]}(\t)}{Q(\t,1)} + \frac{2D^{n+\frac12}v^{2n}\sgn(Q_\t)}{Q\left(\ol\t,1\right)^nQ(\t,1)}.
	\]
	Lemma \ref{lem:fnrec} with $n\mapsto n-1$ gives
	\[
		\frac{g_n^{[1]}(\t)}{Q(\t,1)} = \frac{n-\frac12}{n}g_{n-1}^{[1]}(\t) - \frac{D^{n-\frac12}}{n}\frac{v^{2n}\sgn(Q_\t)Q_\t}{Q\left(\ol\t,1\right)^nQ(\t,1)}.
	\]
	Plugging into the previous equation and applying Lemma \ref{lem:bkkidentity} yields
	\[
		R_{-2n}\left(g_n^{[1]}(\t)\right) = -(2n-1)Q_\t g_{n-1}^{[1]}(\t) + \frac{2D^{n-\frac12}v^{2n-2}\sgn(Q_\t)}{Q\left(\ol\t,1\right)^{n-1}}.
	\]
	We compute
	\begin{align*}
		R_{2-2n}\left(\frac{2D^{n-\frac12}v^{2n-2}\sgn(Q_\t)}{Q\left(\ol\t,1\right)^{n-1}}\right) &= 0,\\
		R_{2-2n}\left(Q_\t g_{n-1}^{[1]}(\t)\right) &= Q_\t R_{2-2n}\left(g_{n-1}^{[1]}(\t)\right) - g_{n-1}^{[1]}(\t)\frac{Q\left(\ol\t,1\right)}{v^2}
	\end{align*}
	by Lemma \ref{lem:holomzoo} (1). We infer that
	\[
		R_{2-2n}\circ R_{-2n}\left(g_n^{[1]}(\t)\right) = -(2n-1)\left(Q_\t R_{2-2n}\left(g_{n-1}^{[1]}(\t)\right)-g_{n-1}^{[1]}(\t)\frac{Q\left(\ol\t,1\right)}{v^2}\right).
	\]
	
	Now, we suppose that $n=1$. Then the previous equation gives 
	\[
		R_0\circ R_{-2}\left(g_1^{[1]}(\t)\right) = -Q_\t R_0\left(g_0^{[1]}(\t)\right) + g_0^{[1]}(\t)\frac{Q\left(\ol\t,1\right)}{v^2}.
	\]
	We then compute, using \eqref{eq:betaHdiff}
	\[
		R_0\left(g_0^{[1]}(\t)\right) = 2i\frac{\del}{\del\t} \b\left(\frac{Dv^2}{\vt{Q(\t,1)}^2};\frac12,\frac12\right) = \frac{2D^\frac12\sgn(Q_\t)}{Q(\t,1)}.
	\]
	Combining this with the previous equation we obtain
	\[
		R_2\circ R_0\circ R_{-2}\left(g_1^{[1]}(\t)\right) = R_2\left(-2Q_\t\frac{D^\frac12\sgn(Q_\t)}{Q(\t,1)} + \b\left(\frac{Dv^2}{\vt{Q(\t,1)}^2};\frac12,\frac12\right)\frac{Q\left(\ol\t,1\right)}{v^2}\right).
	\]
	By Lemma \ref{lem:holomzoo} (1) and \eqref{eq:betaHdiff}, we calculate that
	\begin{multline*}
		\frac{\del}{\del\t} \left(-2Q_\t\frac{D^\frac12\sgn(Q_\t)}{Q(\t,1)} + \b\left(\frac{Dv^2}{\vt{Q(\t,1)}^2};\frac12,\frac12\right)\frac{Q\left(\ol\t,1\right)}{v^2}\right) = -\frac{iQ\left(\ol\t,1\right)}{v^2}\frac{D^\frac12\sgn(Q_\t)}{Q(\t,1)} \\ 
+ 2Q_\t\frac{D^\frac12\sgn(Q_\t)}{Q(\t,1)^2} Q'(\t,1) - \frac{iD^\frac12\sgn(Q_\t)}{Q(\t,1)}\frac{Q\left(\ol\t,1\right)}{v^2}
		+ \frac{i}{v^3}\b\left(\frac{Dv^2}{\vt{Q(\t,1)}^2};\frac12,\frac12\right)Q\left(\ol\t,1\right).
	\end{multline*}
	Using Lemma \ref{lem:bkkidentity} and the first identity of Lemma \ref{lem:Qtricks}, we obtain
\begin{align*}
		R_2\circ R_0\circ R_{-2}\left(g_1^{[1]}(\t)\right) &= 2i\left(-2i\frac{Q\left(\ol\t,1\right)}{v^2}\frac{\sqrt{D}\sgn\left(Q_\t\right)}{Q(\t,1)} + 2\frac{\sqrt{D}|Q_\t|Q'(\t,1)}{Q(\t,1)^2}\right.\\
		&\hspace{-3.2cm}\left.+ i\b\left(\frac{Dv^2}{|Q(\t,1)|^2};\frac12,\frac12\right)\frac{Q\left(\ol\t,1\right)}{v^3}\right) + \frac2v\left(-\frac{2\sqrt{D}|Q_\t|}{Q(\t,1)} + \b\left(\frac{Dv^2}{|Q(\t,1)|^2};\frac12,\frac12\right)\frac{Q\left(\ol\t,1\right)}{v^2}\right)\\
		&= 4\sqrt{D}\frac{|Q(\t,1)|^2}{v^2}\frac{\sgn(Q_\t)}{Q(\t,1)^2} - \frac{4\sqrt{D}|Q_\t|}{Q(\t,1)^2}\left(-iQ'(\t,1)+\frac{Q(\t,1)}{v}\right)\\
		&= 4\sqrt{D}\left(D+Q_\t^2\right)\frac{\sgn(Q_\t)}{Q(\t,1)^2} - \frac{4\sqrt{D}\sgn(Q_\t)}{Q(\t,1)^2}Q_\t^2 = \frac{4D^\frac32\sgn(Q_\t)}{Q(\t,1)^2}.
	\end{align*}
	We can then directly conclude the claim using Bol's identity \eqref{eq:bolidentity}.\\
	{\bf Step $3$: Application of Lemmas \ref{lem:fnrec} and \ref{lem:gnzero} and reducing to $2n+2$ derivatives}\\
	Employing Lemma \ref{lem:fnrec} and Lemma \ref{lem:gnzero} with $n\mapsto n+1$ yields
	\begin{equation}\label{eq:rewriteclaim}
		\frac{\del^{2n+3}}{\del\t^{2n+3}} g_{n+1}^{[1]}(\t) = \frac{n+\frac12}{n+1}\frac{\del^{2n+3}}{\del\t^{2n+3}} \left(Q(\t,1)g_n^{[1]}(\t)\right).
	\end{equation}
	By \eqref{eq:betaHdiff}, we compute that
	\[
		\frac{\del}{\del\t} \left(Q(\t,1)g_n^{[1]}(\t)\right) = (n+1)Q(\t,1)^nQ'(\t,1)\b\left(\frac{Dv^2}{\vt{Q(\t,1)}^2};n+\frac12,\frac12\right) - \frac{iD^{n+\frac12}v^{2n}\sgn(Q_\t)}{Q\left(\ol\t,1\right)^n}.
	\]
	We observe that the final term gets annihilated by differentiating $2n+1$ times and thus
	\begin{align*}
		\frac{n\!+\!\frac12}{n\!+\!1}\frac{\del^{2n+3}}{\del\t^{2n+3}} \!\left(Q(\t,1)g_n^{[1]}(\t)\right) 
		\!=\! \left(n\!+\!\frac12\right)\!\frac{\del^{2n+2}}{\del\t^{2n+2}} \!\left(\!Q(\t,1)^nQ'(\t,1)\b\!\left(\frac{Dv^2}{\vt{Q(\t,1)}^2};n\!+\!\frac12,\frac12\right)\!\right)\!.
	\end{align*}
	{\bf Step $4$: Reducing to $2n+1$ derivatives}\\
	By \eqref{eq:betaHdiff}, we furthermore calculate that
	\begin{multline*}
		\frac{\del}{\del\t} \left(Q(\t,1)^nQ'(\t,1)\b\left(\frac{Dv^2}{\vt{Q(\t,1)}^2};n+\frac12,\frac12\right)\right) = Q(\t,1)^nQ''(\t,1)\b\left(\frac{Dv^2}{\vt{Q(\t,1)}^2};n+\frac12,\frac12\right)  \\
		+ nQ(\t,1)^{n-1}Q'(\t,1)^2\b\left(\frac{Dv^2}{\vt{Q(\t,1)}^2};n+\frac12,\frac12\right)
- iQ(\t,1)^nQ'(\t,1)\frac{D^{n+\frac12}v^{2n}\sgn(Q_\t)}{\vt{Q(\t,1)}^{2n}Q(\t,1)}.
	\end{multline*}
	By the first identity of Lemma \ref{lem:Qtricks}, the final term may be rewritten as
	\[
		-iQ(\t,1)^nQ'(\t,1)\frac{D^{n+\frac12}v^{2n}\sgn(Q_\t)}{\vt{Q(\t,1)}^{2n}Q(\t,1)} = -\frac{D^{n+\frac12}v^{2n-1}\sgn(Q_\t)}{Q\left(\ol\t,1\right)^n} + \frac{D^{n+\frac12}v^{2n}\vt{Q_\t}}{Q\left(\ol\t,1\right)^nQ(\t,1)}.
	\]
	Again the final term gets annihilated upon differentiating $2n+1$ times. Consequently, we obtain, by the second identity of Lemma \ref{lem:Qtricks},
	\begin{multline*}
		\frac{n+\frac12}{n+1}\frac{\del^{2n+3}}{\del\t^{2n+3}} \left(Q(\t,1)g_n^{[1]}(\t)\right) = \left(n+\frac12\right)\frac{\del^{2n+1}}{\del\t^{2n+1}} \left(DnQ(\t,1)^{n-1}\b\left(\frac{Dv^2}{\vt{Q(\t,1)}^2};n+\frac12,\frac12\right)\right.\\
		\left.+ (2n+1)Q(\t,1)^nQ''(\t,1)\b\left(\frac{Dv^2}{\vt{Q(\t,1)}^2};n+\frac12,\frac12\right) + \frac{D^{n+\frac12}v^{2n}\vt{Q_\t}}{Q\left(\ol\t,1\right)^nQ(\t,1)}\right)\\
		= \left(n+\frac12\right)\frac{\del^{2n+1}}{\del\t^{2n+1}} \left(Dn\frac{g_n^{[1]}(\t)}{Q(\t,1)}+(2n+1)Q''(\t,1)g_n^{[1]}(\t)+\frac{Dg_n^{[2]}(\t)}{Q(\t,1)}\right).
	\end{multline*}
	{\bf Step $5$: Application of the induction hypothesis}\\
	We use Lemma \ref{lem:fnrec} with $n\mapsto n-1$, to obtain
	\[
		\frac{g_n^{[1]}(\t)}{Q(\t,1)} = \frac{n-\frac12}{n}g_{n-1}^{[1]}(\t) - \frac{g_n^{[2]}(\t)}{nQ(\t,1)},
	\]
	and hence, using step 4,
	\begin{multline*}
		\frac{n+\frac12}{n+1}\frac{\del^{2n+3}}{\del\t^{2n+3}} \left(Q(\t,1)g_n^{[1]}(\t)\right) \\
		= \left(n+\frac12\right)\frac{\del^{2n+1}}{\del\t^{2n+1}} \left(D\left(n-\frac12\right)g_{n-1}^{[1]}(\t)+(2n+1)Q''(\t,1)g_n^{[1]}(\t)\right).
	\end{multline*}
	The induction hypothesis for $n$ and $n-1$, and the fact that $Q''(\t,1)$ is independent of $\t$ gives
	\begin{multline*}
		\frac{n+\frac12}{n+1}\frac{\del^{2n+3}}{\del\t^{2n+3}} \left(Q(\t,1)g_n^{[1]}(\t)\right)\\
		= \frac{(-1)^niD^{n+\frac12}\left(n+\frac12\right)(2n)!\sgn(Q_\t)}{4^{n}} \left(\frac1n\frac{\del^2}{\del\t^2}\frac{1}{Q(\t,1)^n} - (2n+1)\frac{Q''(\t,1)}{Q(\t,1)^{n+1}}\right).
	\end{multline*}
	{\bf Step $6$: Simplifying the expressions}\\
	Using the second identity of Lemma \ref{lem:Qtricks}, we compute
	\[
		\frac1n\frac{\del^2}{\del\t^2} \frac{1}{Q(\t,1)^n} - (2n+1)\frac{Q''(\t,1)}{Q(\t,1)^{n+1}} = \frac{D(n+1)}{Q(\t,1)^{n+2}}.
	\]
	Inserting this into the result from step $5$ yields
	\[
		\frac{n+\frac12}{n+1}\frac{\del^{2n+3}}{\del\t^{2n+3}} \left(Q(\t,1)g_n^{[1]}(\t)\right) = \frac{(-1)^ni\left(n+\frac12\right)\left(n+1\right)(2n)!D^{n+\frac32}\sgn(Q_\t)}{4^{n}Q(\t,1)^{n+2}}.
	\]
	By \eqref{eq:rewriteclaim}, we ultimately arrive at the claim of the lemma (with $n\mapsto n+1$).
\end{proof}

We are now ready to prove Proposition \ref{prop:Psidiff}.
\begin{proof}[Proof of Proposition \ref{prop:Psidiff}]
\begin{enumerate}[leftmargin=*,label=(\arabic*), wide, labelwidth=0pt, labelindent=0pt]
\item By Lemma \ref{lem:difftrick} and \eqref{eq:betaHdiff}, we obtain
	\begin{equation*}
		\frac{\del}{\del\ol\t} \b\left(\frac{Dv^2}{\vt{Q(\t,1)}^2};k+\frac12,\frac12\right) = \frac{iD^{k+\frac12}v^{2k}\sgn(Q_\t)}{\vt{Q(\t,1)}^{2k}Q(\ol\t,1)}.
	\end{equation*}
	This implies the claim.
\item Lemma \ref{lem:bolPsiInduction} implies that
	\[
		\frac12\D^{2k+1} \left(g_k^{[1]}(\t)\right) = -\frac{D^{k+\frac12}(2k)!}{(4\pi)^{2k+1}} \frac{\sgn(Q_\t)}{Q(\t,1)^{k+1}},
	\]
	from which we deduce the claim by \eqref{eq:Lambdadef}. 
\item The claim follows directly from \eqref{eq:split} along with part (1) and \eqref{eq:Lambdadef}. \qedhere
\end{enumerate}
\end{proof}

\subsection{Further properties of \texorpdfstring{$\Psi_{-k,D}$}{\textPsi(-k)D} and the proof of Theorem \ref{thm:Psimain}}

We begin with the local behaviour of $\Psi_{-k,D}$. Similar as in the proof of Proposition \ref{prop:Lambdalocal}, we obtain.
\begin{prop}\label{prop:Psilocal}
	Let $\t\in E_D$.
	\begin{enumerate}[leftmargin=*,label=\rm{(\arabic*)}]
		\item We have
		\[
			\lim_{\eps\to0^+} (\Psi_{-k,D}(\t+i\eps)-\Psi_{-k,D}(\t-i\eps)) = 0.
		\]
	
		\item We have
		\[
			\tfrac12\lim_{\eps\to0^+} (\Psi_{-k,D}(\t+i\eps)+\Psi_{-k,D}(\t-i\eps)) = \Psi_{-k,D}(\t).
		\]
		
		\item We have
		\[
			\lim_{\eps\to0^+} \left(\frac{\del}{\del\ol\tau} \Psi_{-k,D}(\tau+i\eps) - \frac{\del}{\del\ol\tau} \Psi_{-k,D}(\tau-i\eps)\right) = iD^{k+\frac12}v^{2k} \sum_{\substack{Q \in \Qc_D \\ Q_{\tau} = 0}} \frac{\sgn(Q)}{Q\left(\ol{\t},1\right)^{k+1}}.
		\]
	\end{enumerate}
\end{prop}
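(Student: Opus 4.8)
The plan is to follow the proof of Proposition~\ref{prop:Lambdalocal} almost verbatim. Write $g_k^{[1]}$ for the summand of $\Psi_{-k,D}$ attached to a form $Q\in\Qc_D$, so that $\Psi_{-k,D}=\frac12\sum_{Q\in\Qc_D}g_k^{[1]}$. Fix $\t\in E_D$ and, for $\eps>0$ small enough that $\t\pm i\eps\notin E_D$, split
\[
	\Psi_{-k,D}(\t\pm i\eps)=\frac12\left(\sum_{\substack{Q\in\Qc_D\\ Q_\t=0}}+\sum_{\substack{Q\in\Qc_D\\ Q_\t\neq0}}\right)g_k^{[1]}(\t\pm i\eps).
\]
The first sum is finite by \cite{BKK}*{Lemma~5.1~(1)}, and, exactly as in Proposition~\ref{prop:Lambdalocal} (using $\beta(x;k+\frac12,\frac12)=O\big(x^{k+\frac12}\big)$ as $x\to0^+$ together with Lemma~\ref{lem:bkkidentity} to compare the tail of the second sum with $f_{k+1,D}$), the limit $\eps\to0^+$ may be interchanged with both sums. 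Hence in all three parts it suffices to analyse each summand $g_k^{[1]}$ near $\t$ separately.

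For (1) and (2) the crucial observation is that every summand $g_k^{[1]}$ extends \emph{continuously} through its geodesic $S_Q$. Indeed, $Q(\t,1)^k$ is holomorphic; by Lemma~\ref{lem:bkkidentity} the argument of the incomplete $\beta$-function equals $\frac{D}{D+Q_\t^2}$, which is continuous in $\t$, lies in $(0,1]$, and equals $1$ precisely on $S_Q$; and $x\mapsto\beta(x;k+\frac12,\frac12)$ is continuous on $(0,1]$. Therefore, for every $Q$, both one-sided limits $\lim_{\eps\to0^+}g_k^{[1]}(\t\pm i\eps)$ exist and coincide: for $Q$ with $Q_\t\neq0$ because $\t+i\eps$ and $\t-i\eps$ lie in one connected component of $\H\setminus S_Q$, and for $Q$ with $Q_\t=0$ by the continuous extension, the common value being $Q(\t,1)^k\beta(1;k+\frac12,\frac12)$. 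Summing over $Q$, the difference in (1) vanishes and the average in (2) equals this common one-sided limit, which is the value assigned to $\Psi_{-k,D}$ on $E_D$; this gives (1) and (2), consistently with $\Psi_{-k,D}$ being a locally harmonic Maass form in the sense of Definition~\ref{defn:LHMF}.

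For (3) continuity is no longer available --- this is exactly the ``not differentially removable'' part --- so instead one differentiates termwise. On $\H\setminus E_D$, equation~\eqref{eq:betaHdiff} (with $n=k$) together with $\frac{\del}{\del\ol\t}Q(\t,1)^k=0$ and $\vt{Q(\t,1)}^2=Q(\t,1)Q(\ol\t,1)$ yield
\[
	\frac{\del}{\del\ol\t}g_k^{[1]}(\t)=\frac{iD^{k+\frac12}v^{2k}\sgn(Q_\t)}{Q(\ol\t,1)^{k+1}},\qquad\text{so}\qquad \frac{\del}{\del\ol\t}\Psi_{-k,D}(\t)=\frac{iD^{k+\frac12}v^{2k}}{2}\sum_{Q\in\Qc_D}\frac{\sgn(Q_\t)}{Q(\ol\t,1)^{k+1}},
\]
the term-by-term differentiation being legitimate since the last series again converges locally uniformly on $\H\setminus E_D$ by comparison with $f_{k+1,D}$. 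Evaluating at $\t\pm i\eps$, the factor $\im(\t\pm i\eps)^{2k}\to v^{2k}$ is continuous and bounded and so may be pulled out of the limit, leaving $\frac{iD^{k+\frac12}v^{2k}}{2}$ times $\lim_{\eps\to0^+}\sum_{Q\in\Qc_D}\big(\frac{\sgn(Q_{\t+i\eps})}{Q(\ol{\t+i\eps},1)^{k+1}}-\frac{\sgn(Q_{\t-i\eps})}{Q(\ol{\t-i\eps},1)^{k+1}}\big)$. Arguing termwise exactly as in Proposition~\ref{prop:Lambdalocal}: summands with $Q_\t\neq0$ contribute $0$, while for each of the finitely many $Q$ with $Q_\t=0$ one has $\sgn(Q_{\t\pm i\eps})=\pm\sgn(Q)$ by \cite{BKK}*{(5.4),(5.6)} and $Q(\ol{\t\pm i\eps},1)\to Q(\ol\t,1)$, so each such term contributes $\frac{2\sgn(Q)}{Q(\ol\t,1)^{k+1}}$. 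Multiplying through gives the asserted identity.

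I expect the only genuinely delicate point to be the bookkeeping already present in Proposition~\ref{prop:Lambdalocal}: isolating the finitely many geodesics through $\t$ and justifying that $\lim_{\eps\to0^+}$ commutes with the infinite sum near a point of $E_D$. The one new ingredient --- the continuity of the incomplete $\beta$-function at argument $1$ --- is what makes (1) and (2) go through, and it also pinpoints why (3) behaves differently, namely that $\frac{\del}{\del\ol\t}g_k^{[1]}$ carries the discontinuous factor $\sgn(Q_\t)$. Everything else is a direct computation from \eqref{eq:betaHdiff}, Lemma~\ref{lem:bkkidentity}, and the sign facts for Heegner geodesics recalled from \cite{BKK}.
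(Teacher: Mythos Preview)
Your proposal is correct and follows exactly the approach the paper indicates: the paper's proof consists solely of the sentence ``Similar as in the proof of Proposition~\ref{prop:Lambdalocal}, we obtain,'' and you have carried out precisely that adaptation, with the continuity of $x\mapsto\beta(x;k+\tfrac12,\tfrac12)$ at $x=1$ supplying the new ingredient for (1)--(2) and the termwise formula coming from \eqref{eq:betaHdiff} (via Lemma~\ref{lem:difftrick}) handling (3).
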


Furthermore, we have, for every $\tau \in \H \setminus E_D$,
\begin{equation} \label{eq:Eichlerdiff}
\begin{aligned}
&\xi_{-2k} \left(\Lambda_{k+1,D}^{*}(\tau)\right) = \Lambda_{k+1,D}(\tau), \qquad &&\D^{2k+1} \left(\Lambda_{k+1,D}^{*}(\tau)\right) = 0, \\
&\xi_{-2k} \left(\mathcal{E}_{\Lambda_{k+1,D}}(\tau)\right) = 0,  &&\D^{2k+1} \left(\mathcal{E}_{\Lambda_{k+1,D}}(\tau)\right) = \Lambda_{k+1,D}(\tau).
\end{aligned}
\end{equation}
The third claim follows by holomorphicity of $\Ec_{\La_{k+1,D}}$, while the second claim holds as $\Lambda_{k+1,D}^*$ (as a function of $\t$) is a polynomial of degree at most $2k$ by \eqref{eq:Eichlerdef}. The first and fourth claim follow by a standard calculation using the integral representations from \eqref{eq:Eichlerdef} directly.

Now, we are ready to prove Theorem \ref{thm:Psimain}.

\begin{proof}[Proof of Theorem \ref{thm:Psimain}]
	\begin{enumerate}[leftmargin=*,label=(\arabic*), wide, labelwidth=0pt, labelindent=0pt]
	\item[(2)] We define 
	\[
		f(\t) \coloneqq \Psi_{-k,D}(\t) + \frac{D^{k+\frac12}(2k)!}{(4\pi)^{2k+1}} \Ec_{\La_{k+1,D}}(\t) - D^{k+\frac12} \La_{k+1,D}^*(\t).
	\]
	Combining Proposition \ref{prop:Psidiff} with \eqref{eq:Eichlerdiff}, we deduce that 
	\[
		\xi_{-2k}\left(f(\tau)\right) = \D^{2k+1}\left(f(\tau)\right) = 0.
	\]
	Hence, $f$ is a polynomial in $\t$ of degree at most $2k$. By Proposition \ref{prop:Psilocal} (1), $\Psi_{-k,D}$ has no jumps on $E_D$. Thus, we may freely select an arbitrary connected component of $\H \setminus E_D$ to compute $f$. Choosing the connected component of $\H \setminus E_D$ containing $i\infty$, we are in the same situation as in the induction start during the proof of \cite{BKK}*{Theorem 7.1}. In other words, the function $f$ is in fact constant, and this constant was computed in \cite{BKK}*{Lemma 7.3}. We infer that $f$ coincides with $c_\infty$.
	\item[(1)] We verify the four conditions in Definition \ref{defn:LHMF}.
	\begin{enumerate}[leftmargin=*,label=(\roman*), wide, labelwidth=0pt, labelindent=0pt]
	\item Modularity of weight $-2k$ follows by Lemma \ref{lem:InvarFactors} and \eqref{eq:GammaOnQ}.
	\item Local harmonicity with respect to $\De_{-2k}$ outside $E_D$ is shown in Proposition \ref{prop:Psidiff} $(3)$.
	\item The required behaviour on $E_D$ is given in Proposition \ref{prop:Psilocal} (2).
	\item By Theorem \ref{thm:Psimain} (2), $\Psi_{-k,D}$ has most polynomial growth towards $i\infty$. More precise, $\La_{k+1,D}$ admits a Fourier expansion of the shape
	$
	\sum_{n\ge1}c(n)e^{2\pi in\t},
	$
	where the Fourier coefficients depend on the connected component of $\H \setminus E_D$ in which $\tau$ is located and which were computed in \cites{mo22, mo21}. Noting that $c(n) \in \R$ and letting $\Ga(s,y)$ denote the incomplete $\Ga$-function, we obtain, for $v\gg1$,
	\begin{align*}
	\mathcal{E}_{\Lambda_{k+1,D}}(\tau) &= \sum_{n\geq 1} \frac{c(n)}{n^{2k+1}}e^{2\pi i n \tau}, \qquad
	\Lambda_{k+1,D}^*(\tau) = \sum_{n \geq 1} \frac{c(n) }{(4\pi n)^{2k+1}}\Gamma(2k+1,4\pi n v) e^{- 2\pi i n \tau}.
	\end{align*}
	We observe that the holomorphic Eichler integral vanishes as $\t\to i\infty$, and the same holds for the non-holomorphic Eichler integral due to \cite{nist}*{§8.11 (i)}. This proves that
	\[
		\lim_{\t\to i\infty} \Psi_{-k,D}(\tau) = c_\infty.
	\]
	Proposition \ref{prop:Psilocal} (1) yields that the singularities of $\Psi_{-k,D}$ on $E_D$ are continuously removable. Combining Proposition \ref{prop:Psilocal} $(3)$ with Lemmas \ref{lem:difftrick} and \ref{lem:localnonzero} shows that $\Psi_{-k,D}$ has no differentiable continuation to $E_D$. This completes the proof. \qedhere
\end{enumerate}
\end{enumerate}
\end{proof}

\end{document}